\newtheorem{theorem}{Theorem}[section]
\newtheorem{remark}[theorem]{Remark}
\newtheorem{lemma}[theorem]{Lemma}
\newtheorem{definition}[theorem]{Definition}
\renewcommand{\O}{{\mathcal O}}
\newcommand{\SCondAlb}{S}
\begin{document}
\parskip.9ex

\title[Design of DIRK Schemes with High Weak Stage Order]
{Design of DIRK Schemes with High Weak Stage Order}

\author[A. Biswas]{Abhijit Biswas}
\address[Abhijit Biswas]
{Computer, Electrical, and Mathematical Sciences \& Engineering Division \\ 
King Abdullah University of Science and Technology \\ Thuwal 23955 \\ Saudi Arabia} 
\email{abhijit.biswas@kaust.edu.sa}
\urladdr{https://math.temple.edu/\~{}tug14809}

\author[D. Ketcheson]{David Ketcheson}
\address[David Ketcheson]
{Computer, Electrical, and Mathematical Sciences \& Engineering Division \\ 
King Abdullah University of Science and Technology \\ Thuwal 23955 \\ Saudi Arabia} 
\email{david.ketcheson@kaust.edu.sa}
\urladdr{https://www.davidketcheson.info}

\author[B. Seibold]{Benjamin Seibold}
\address[Benjamin Seibold]
{Department of Mathematics \\ Temple University \\
1805 North Broad Street \\ Philadelphia, PA 19122}
\email{seibold@temple.edu}
\urladdr{http://www.math.temple.edu/\~{}seibold}

\author[D. Shirokoff]{David Shirokoff}
\address[David Shirokoff]
{Corresponding author, Department of Mathematical Sciences \\ New Jersey Institute of Technology \\ University Heights \\ Newark, NJ 07102}
\email{david.g.shirokoff@njit.edu}
\urladdr{https://web.njit.edu/\~{}shirokof}

\subjclass[2000]{65L04, 65L20, 65M06, 65M12, 65M22.}
\keywords{DIRK methods, weak stage order, order-reduction, stiffly accurate, A-stability.}

\begin{abstract}
Runge-Kutta (RK) methods may exhibit order reduction when applied to certain stiff problems. While fully implicit RK schemes exist that avoid order reduction via high-stage order, DIRK (diagonally implicit Runge-Kutta) schemes are practically important due to their structural simplicity; however, these cannot possess high stage order. The concept of weak stage order (WSO) can also overcome order reduction, and it is compatible with the DIRK structure. DIRK schemes of WSO up to $3$ have been proposed in the past, however, based on a simplified framework that cannot be extended beyond WSO 3. In this work a general theory of WSO is employed to overcome the prior WSO barrier and to construct practically useful high-order DIRK schemes with WSO $4$ and above. The resulting DIRK schemes are stiffly accurate, L-stable, have optimized error coefficients, and are demonstrated to perform well on a portfolio of relevant ODE and PDE test problems.
\end{abstract}

\maketitle
\renewcommand{\O}{{\mathcal O}}
\newcommand{\dbtilde}[1]{\accentset{\approx}{#1}}
\newcommand{\vardbtilde}[1]{\tilde{\raisebox{0pt}[0.85\height]{$\tilde{#1}$}}}

\newcommand{\Err}{\mathcal{I}(\zeta)}  
\newcommand{\myremarkend}{$\spadesuit$} 
\newcommand{\lbmap}{\left[} 
\newcommand{\rbmap}{\right]} 
\newcommand{\WSOset}{\mathbb{W}_q}
\newcommand{\OCset}{\mathbb{V}_{p}}
\newcommand{\numc}{n_c}
\newcommand{\minPolc}{P_{\rm C}}
\newcommand{\qso}{\tilde{q}}
\newcommand{\RzCoeff}{\beta}
\newcommand{\Sredr}{\tilde{r}}
\newcommand{\BigO}{{\mathcal{O}}}
\newcommand{\Dt}{\Delta t}
\newcommand{\vecpower}[2]{\vec{#1}^{\,#2}}
\newcommand{\kgen}{m}
\newcommand{\basismat}{L}
\newcommand{\Umat}{U}
\newcommand{\Vmat}{V}
\newcommand{\Wmat}{W}
\newcommand{\bsmall}{\hat{\vec{b}}}
\newcommand{\Ksmall}{\hatK}
\newcommand{\kibitz}[2]{\textcolor{#1}{#2}}
\newcommand{\ben}[1] {\kibitz{magenta}{[BS says: #1]}}
\newcommand{\abhi}[1] {\kibitz{blue}{[AB says: #1]}}
\newcommand{\dave}[1] {\kibitz{red}{[DS says: #1]}}
\newcommand{\david}[1] {\kibitz{purple}{[DK says: #1]}}
\newcommand{\red}[1]{\kibitz{red}{#1}}

\section{Introduction}
\label{sec:introduction}
This paper focuses on Runge-Kutta (RK) methods for initial value problems
\begin{equation}\label{eq:IVP}
         u^{\prime}(t)  = f(t,u(t)), \  u(0) = u_{0}; \ u\in \mathbb{R}^{m}, \ f : \mathbb{R}\times\mathbb{R}^m \to \mathbb{R}^m\;.
\end{equation}
Let $u_n$ and $u_{n+1}$ denote the numerical approximations to the true solution at times $t_n$ and $t_{n+1} = t_n+\Delta t$, respectively, where $\Delta t$ is the time step size. One step of the RK method reads as
\begin{equation}\label{RK_methods_1}
    u_{n+1} = u_{n}+\Delta t \sum_{j = 1}^{s} b_j f(t_n+c_j \Delta t, g_j)\;,
\end{equation}
via the stage approximations
\begin{equation}\label{RK_methods_2}
    g_i = u_n+\Delta t \sum_{j = 1}^{s} a_{ij} f(t_n+c_j \Delta t, g_j), \quad i = 1,2, \ldots,s\;.
\end{equation}
The parameters $A = (a_{ij})_{ij} \in \mathbb{R}^{s \times s}$ and $\vec{b} = (b_1,\dots,b_s)^T, \vec{c} = (c_1,\dots,c_s)^T \in \mathbb{R}^{s}$ that define the $s$-stage RK scheme are displayed via the Butcher tableau
\begin{equation*} 
\renewcommand\arraystretch{1.2}
\begin{array}
{c|c}
\vec{c} & A\\
\hline
& \vecpower{b}{T}
\end{array}\;.
\end{equation*}
Throughout this work, we assume that the abscissas vector $\vec{c}$ is related to $A$ via
\begin{equation} \label{c-assumption}
\vec{c} = A\vec{e}\;,
\end{equation}
where $\vec{e}\in \mathbb{R}^{s}$ is the vector of ones.
Schemes for which $A$ is lower-triangular are called \emph{diagonally implicit Runge-Kutta} (DIRK) methods. Because the DIRK stage equations can be solved in sequence (whereas a fully-implicit RK method requires simultaneous solution of all stages), these methods are of particular practical interest due to their implementation-friendly structure and cost efficiency.

One major drawback of RK methods is that they may exhibit order reduction \cite{biswas2021structure,burrage1990order,Calvo2002,CarpenterGottliebAbarbanelDon1995,ostermann1992runge,Sanz-Serna1986,verwer1985convergence,wanner1996solving}, i.e., the numerical solution of certain stiff problems \cite{biswas2021structure} converges more slowly than what the formal order of the scheme would suggest. While there exist time-stepping methods that are devoid of order reduction, like linear multi-step methods (LMMs) \cite{lubich1990convergence}, the practical importance of RK methods (as well as related approaches that are equivalent to RK methods \cite{crouzeix1980methode}) renders the question ``how can order reduction be avoided in RK methods?'' central.

Order reduction may manifest in multiple shapes and forms. For explicit RK integration of mildly stiff IBVPs (e.g., advection), techniques to avoid the phenomenon have been developed in
\cite{abarbanel1996removal,alonso2002runge,alonso2004avoiding,CarpenterGottliebAbarbanelDon1995,pathria1997correct}. For stiff ODE problems, order reduction can be explained in terms of stiff limits \cite{prothero1974stability,wanner1996solving}. Order reduction in PDE IBVPs, first pointed out in \cite{crouzeix1975approximation,crouzeix1980approximation}, manifests in an interesting geometric fashion, in a way that the time-stepping error produces spatial boundary layers \cite{rosales2017spatial}.
Foundational work on the numerical analysis of order reduction includes \cite{lubich1993runge,lubich1995runge,Sanz-Serna1986,verwer1985convergence}, and rigorous error analysis for RK methods applied to linear PDEs has been developed in \cite{alonso2003optimal,gonzalez1999optimal,ostermann1992runge,scholz1989order}.

In the stiff setting, implicit Runge-Kutta (IRK) methods with high stage order \cite{wanner1996solving} can remedy the order reduction phenomenon. Unfortunately, high stage order requires a fully implicit RK structure, while the DIRK methods \cite{kennedy2016diagonally} are limited to low stage order \cite{ketcheson2018dirk}. Approaches aimed at bridging this gap include a weaker criterion than stage order that diminishes order reduction specifically for ROW methods applied to linear problems \cite{scholz1989order}. Similar conditions were proposed in \cite{ostermann1992runge}, albeit without providing numerical schemes that satisfy those conditions. In a similar spirit, the concept of weak stage order (WSO) was proposed in \cite{rosales2017spatial}, which is a generalization of the conditions stated in \cite{rang2014analysis}. More recently, \cite[Chapter 6]{Roberts2021} and \cite{RobertsSandu2022} extended the weak stage order conditions to generalized-structure additively partitioned Runge-Kutta (GARK) methods.

Like stage order, WSO imposes certain algebraic relations between the Runge-Kutta coefficients, but with two key differences: (a)~WSO remedies order reduction only for certain problems; however, (b)~the WSO conditions are compatible with the DIRK structure. A special case of WSO, called the WSO eigenvector criterion, has been studied in \cite{ketcheson2018dirk}: DIRK schemes up to order $4$ and WSO $3$ have been provided. At the same time, a barrier theorem was proved \cite{ketcheson2018dirk, BiswasKetchesonSeiboldShirokoff2022}: for high-order DIRK schemes, the WSO eigenvector criterion cannot be extended beyond WSO $3$.


\section{The Order Reduction Phenomenon}
\label{sec:OR_phenomenon}
In this section we review the order reduction phenomenon in the context of stiff ODEs. Prothero and Robinson \cite{prothero1974stability} introduced a family of problems of the form
\begin{equation}\label{pro_robin_stiff_problem}
    u^{\prime} = \lambda\left(u-\phi(t)\right)+\phi^{\prime}(t), 
    \quad u(0) = u_{0},\quad \text{with} \ \text{Re}(\lambda)\leq 0\;.
\end{equation}
Here $\phi(t)$ is any smooth function that varies at a moderate rate (i.e., $\phi'(t) = O(1)$), while $\lambda$ is a parameter that allows one to make the problem \eqref{pro_robin_stiff_problem} arbitrarily stiff. If $u(0)=\phi(0)$, then for any $\lambda \in \mathbb{C}$, \eqref{pro_robin_stiff_problem} has the solution $u(t) = \phi(t)$.  If $\textrm{Re}(\lambda) \ll -1$, the problem \eqref{pro_robin_stiff_problem} is stiff, and solutions different from $\phi(t)$ decay rapidly back to $\phi(t)$.

Equation \eqref{pro_robin_stiff_problem} provides a useful model for analyzing the truncation errors of a Runge-Kutta scheme for a stiff problem. One may introduce the following \emph{local truncation errors} (LTEs) \cite{wanner1996solving}: $E_{i,\Delta t}(t_n)$ for the intermediate stages, and $E_{\Delta t}(t_n)$ for the final step update. The LTEs characterize the failure of the exact solution $u(t)=\phi(t)$ to satisfy the RK scheme, and are obtained as the residuals of substituting $g_i = \phi(t_n+c_i \Delta t)$, $u_n = \phi(t_n)$, and $u_{n+1} = \phi(t_n+ \Delta t)$ into the RK scheme \eqref{RK_methods_1}--\eqref{RK_methods_2} applied to problem \eqref{pro_robin_stiff_problem}. Upon Taylor-expanding about $t_n$, the LTEs are \cite{biswas2021structure}:
\begin{align}\label{Eq:TaylorLTEStages}
    \vec{\mathcal{E}}(t_n) &= \sum_{k \geq 1} \frac{( \Delta t)^{k}}{(k-1)!} \vec{\tau}^{(k)} \phi^{(k)}(t_n) \;, \\
    \label{Eq:TaylorLTEFinal}
    E_{\Delta t}(t_n) &= \sum_{k \geq 1} \frac{( \Delta t)^{k}}{(k-1)!}\left[\sum_{j=1}^{s}b_{j}c_{j}^{k-1}-\frac{1}{k}  \right] \phi^{(k)}(t_n) \;,
\end{align}
where $\vec{\mathcal{E}}(t_n) := \left[E_{1,\Delta t}(t_n),E_{2,\Delta t}(t_n), \ldots, E_{s,\Delta t}(t_n)\right]^T$. Here the vector 
\begin{equation*}
    \vec{\tau}^{(k)} := A \vecpower{c}{k-1}-\frac{1}{k}\vecpower{c}{k}, \quad \textrm{for } k \geq 1\;,
\end{equation*}
is called the $k$th \emph{stage order residual} (it will play an important role later), $\phi^{(k)}(t_n) $ is the $k$th derivative of $\phi$ at $t_n$, and $\vecpower{c}{k} := \left[c_{1}^{k},c_{2}^{k}, \ldots, c_{s}^{k} \right]^T$ denotes component-wise exponentiation. Notice that \eqref{c-assumption} implies $\tau^{(1)}=0$.

The numerical approximation error at time $t_{n}$ is then defined as $\epsilon_{n}:= u_{n} - \phi(t_n)$. It satisfies the same linear recursion as the RK scheme, with a forcing prescribed by the LTEs \eqref{Eq:TaylorLTEStages} and \eqref{Eq:TaylorLTEFinal}:
\begin{equation}\label{final_rec_err_rel}
\epsilon_{n+1} = R(\zeta) \epsilon_{n} + \underbrace{\zeta \vecpower{b}{T}(I-\zeta A)^{-1} \vec{\mathcal{E}}(t_n)}_{=\Err} +E_{\Delta t}(t_n) \;.
\end{equation}
Here $\zeta := \lambda \Delta t$, and $R(z)$ ($z \in \mathbb{C}$) is the \emph{stability function}:
\begin{equation}\label{Eq:stabilityfunction1}
    R(z) := 1 + z\vecpower{b}{T} (I - z A)^{-1} \vec{e} = 
    \frac{\det(I-zA+z\vec{e}\vecpower{b}{T})}{\det(I-zA)}\;.
\end{equation}
Inspecting the expressions \eqref{Eq:TaylorLTEStages}, \eqref{Eq:TaylorLTEFinal} and \eqref{final_rec_err_rel} above, we see that the following conditions influence the order of the local error:
\begin{align}
    B(\xi):& & \vecpower{b}{T} \vecpower{c}{k-1} &= \frac{1}{k} & \text{ for } &k = 1,2,\ldots,\xi\;; \\
    C(\xi):& & \vec{\tau}^{(k)} &= 0  & \text{ for } &k = 1, 2, \ldots, \xi\;; \phantom{\dfrac{1}{k}}  \\
    \SCondAlb(\xi):& & \vecpower{b}{T} A^j \vec{\tau}^{(k)} &= 0 & \text{ for } &k > 0,\ j+k < \xi\;; \phantom{\dfrac{1}{k}} \label{Scond} \\
    T(\xi):& & \vecpower{b}{T} A^{k-1}\vec{e} &= \frac{1}{k!} & \text{ for } &k = 1, 2, \ldots, \xi\;.
\end{align}
The conditions $B(\xi)$, $C(\xi)$ are widely used and known as \emph{simplifying assumptions}; they determine the order of accuracy of  the quadrature and subquadrature rules on which the RK method is based \cite{butcher2008numerical}. Notice that $B(p)$ implies $E_{\Delta t} = \O(\Delta t^{p+1})$. Here we have introduced notation for the additional conditions $T$ and $S$ since they play an important role below. The conditions $T(\xi)$ determine the order of accuracy of the method for non-stiff linear problems. The conditions $\SCondAlb(\xi)$ have appeared for instance in \cite{albrecht1987}. The conditions $B(p)$, $S(p)$, and $T(p)$ are necessary (though not sufficient) for a method to be of order $p$ for general problems. Notice that if $B(p)$ and $T(p)$ hold, then the first and last terms in \eqref{final_rec_err_rel} are $\O(\Delta t^{p+1})$.  It remains only to bound the second term, 
\begin{equation*}
    \Err := \zeta \vecpower{b}{T}(I-\zeta A)^{-1} \vec{\mathcal{E}}(t_n)\;,
\end{equation*}
which is the one that causes order reduction in the stiff setting and on which we focus herein.

In the classical RK theory, i.e., in the non-stiff case, the scheme's convergence is studied in the limit $\Delta t \to 0$ with $\zeta = \mathcal{O}(\Delta t)$. A Neumann expansion in $|\zeta| \ll 1$ of $\zeta (I-\zeta A)^{-1} = \zeta I+\zeta^2 A+\zeta^3 A^2+ \cdots$, then leads to the terms like $\vecpower{b}{T}A^{\ell}\vec{\tau}^{(k)}$ in $\Err$ with $\ell \geq 0$, so that condition $S(p)$ guarantees the one-step error is $\O(\Delta t^{p+1})$.

In the case of stiff problems, we are interested in time steps that are large relative to the fastest time scale of the problem dynamics, which is  $\frac{1}{|\lambda|}$ for the Prothero-Robinson problem, i.e., we want $|\lambda|\Delta t \gg 1$. Hence, we study the convergence of errors under the simultaneous limits $\Delta t \to 0$ and $\zeta \to - \infty$, i.e., $\lambda \to - \infty$ faster than $\Delta t \to 0$. In this case $\zeta^{-1}$ is small and a Neumann expansion yields $\zeta (I-\zeta A)^{-1} = -A^{-1}(I-\zeta^{-1}A^{-1})^{-1}= -A^{-1} - \zeta^{-1} A^{-1}-\zeta^{-2} A^{-2}+ \cdots$, leading to the terms like $\vecpower{b}{T}A^{\ell}\vec{\tau}^{(k)}$ but with $\ell < 0$. These quantities are not guaranteed to vanish by the order conditions, and this in general leads to order reduction.

One way to avoid order reduction is to use schemes with high stage order.
\begin{definition}[Stage order]
     The stage order of a RK scheme is $q = \min\{q_1, q_2\}$,
    where $q_1, q_2$ are the largest integers such that $B(q_1)$ and $C(q_2)$ hold.
\end{definition}
Stage order $q$ implies that every stage of the scheme is an approximation accurate to at least order $q$,
and in particular that the method itself has order at least $q$.  Furthermore,
for a scheme with stage order $q$, it can be shown that the local error is $\O(\Delta t^{q+1})$ even in the stiff regime, thus avoiding order reduction. Unfortunately, DIRK schemes are restricted to low stage order; see e.g.~\cite{ketcheson2018dirk} for a proof of the following well-known result.
\begin{theorem}
	The stage order of an irreducible DIRK scheme is at most $2$. The stage order of a DIRK scheme with non-singular $A$ is at most $1$.
\end{theorem}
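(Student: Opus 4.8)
The plan is to turn the two simplifying assumptions into scalar equations by exploiting that $A$ is lower triangular, and then to argue stage-by-stage starting from the first row. Both statements share the same short base computation, so I would set it up once. Stage order at least $2$ requires $C(2)$, and in particular the first component of $\vec{\tau}^{(2)} = A\vec{c} - \tfrac12\vecpower{c}{2}$ must vanish. Because $A$ is lower triangular, the first row of $C(2)$ reads $a_{11}c_1 = \tfrac12 c_1^2$, while \eqref{c-assumption} gives $c_1 = a_{11}$; substituting yields $c_1^2 = \tfrac12 c_1^2$, hence $c_1 = 0$ and $a_{11} = 0$. This already settles the second statement: if $C(2)$ holds then $\det A = \prod_i a_{ii} = 0$, so $A$ is singular. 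Contrapositively, a non-singular $A$ cannot satisfy $C(2)$, whence $q_2 \le 1$ and the stage order is at most $1$.

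For the first statement I would argue by contradiction, assuming the scheme has stage order $\ge 3$, so that both $C(3)$ and $B(3)$ hold. I would prove by induction on the stage index $i$ that $C(2)$ and $C(3)$ together force $c_i = 0$ for every $i$. The base case $c_1 = 0$ is exactly the computation above. For the inductive step, assume $c_1 = \dots = c_{i-1} = 0$; since $A$ is lower triangular, every off-diagonal term in row $i$ of $C(2)$ and of $C(3)$ carries a positive power of some $c_j$ with $j < i$ and therefore drops out, leaving the two scalar identities $a_{ii}c_i = \tfrac12 c_i^2$ and $a_{ii}c_i^2 = \tfrac13 c_i^3$. If $c_i \neq 0$ these give $a_{ii} = \tfrac12 c_i$ and $a_{ii} = \tfrac13 c_i$ at once, which is impossible; hence $c_i = 0$. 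The induction then yields $\vec{c} = \vec{0}$.

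It remains to close the contradiction. With $\vec{c} = \vec{0}$, the condition $B(2)$, which is part of $B(3)$, reads $\vecpower{b}{T}\vec{c} = \tfrac12$, i.e.\ $0 = \tfrac12$. Thus no DIRK scheme can meet both $C(3)$ and $B(3)$, which establishes the bound of $2$. The hypothesis of irreducibility enters only to exclude the degenerate configuration $\vec{c} = \vec{0}$, in which all stages share the single abscissa $0$ and the scheme collapses to a single quadrature node; in the present formulation one may instead rule this out directly through the non-stiff order condition $B(2)$, which $\vec{c} = \vec{0}$ violates outright.

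The main obstacle is the bookkeeping in the inductive step: one must verify carefully that the previously established zero abscissae really do annihilate all off-diagonal contributions in both $C(2)$ and $C(3)$, so that the comparison of the two resulting expressions for the diagonal entry $a_{ii}$ is legitimate. Once this triangular-structure accounting is in place, the conflict between the degree-one and degree-two moment conditions at the node $c_i$ is exactly what drives $c_i$ to zero, and the remaining steps are immediate.
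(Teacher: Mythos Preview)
The paper does not actually prove this theorem: it is stated as a well-known result with a reference to \cite{ketcheson2018dirk} for a proof. So there is no in-paper argument to compare against.

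That said, your proof is correct. The key computation---reducing row $i$ of $C(2)$ and $C(3)$ to $a_{ii}c_i=\tfrac12 c_i^2$ and $a_{ii}c_i^2=\tfrac13 c_i^3$ once the earlier abscissae vanish---is exactly the standard triangular-structure argument, and the induction is clean. Your closing observation is also on point: as written, your argument does not use irreducibility at all, since the contradiction comes from $B(2)$, which is part of the definition of stage order $\ge 3$. In other words you have shown the (slightly stronger) fact that \emph{every} DIRK scheme has stage order at most $2$; the irreducibility hypothesis in the statement is there for other formulations of the result (e.g.\ ones that conclude $\vec{c}=\vec{0}$ implies reducibility rather than invoking $B(2)$).
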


In the next section, we describe a criterion, called weak stage order (WSO), that is weaker than the stage order conditions but compatible with the DIRK structure. We show later that high order DIRK schemes with high WSO avoid order reduction for a certain class of problems, including the Prothero-Robinson problem \eqref{pro_robin_stiff_problem}.
\section{Weak Stage Order, Order Conditions, and Their Relationship}
\label{sec:notion_WSO}
Since DIRK schemes cannot have high stage order, a weaker condition, referred to as \emph{weak stage order} (WSO), was introduced \cite{ketcheson2018dirk,rosales2017spatial}. High weak stage order can alleviate order reduction in linear problems, and, in contrast to high stage order, is compatible with a DIRK structure. 

The idea behind WSO is to prescribe conditions on $(A,\vec{b})$ that increase the accuracy of the problematic error term $\Err$ in \eqref{final_rec_err_rel}, via the following fact: it holds that $\Err = O(\Delta t^{q+1})$, if
\begin{equation}\label{Eq:WSOMotivation}
    \vecpower{b}{T} (I - \zeta A)^{-1} \vec{\tau}^{(k)} = 0, \; \textrm{ for } k = 1, \ldots, q\;.
\end{equation}
Weak stage order is thus formulated to ensure that \eqref{Eq:WSOMotivation} holds. 

Consider the following space, which is the direct sum of Krylov subspaces generated by the stage order residuals $\{\vec{\tau}^{(1)},\vec{\tau}^{(2)},\ldots,\vec{\tau}^{(q)}\}$: 
\begin{equation}\label{Def:Kspace}
    K_q:=\operatorname{span}\left\{\vec{\tau}^{(1)}, A \vec{\tau}^{(1)}, \ldots, A^{s-1} \vec{\tau}^{(1)}, \vec{\tau}^{(2)}, A \vec{\tau}^{(2)}, \ldots, A^{s-1} \vec{\tau}^{({q})}\right\}.
\end{equation}
Note that by definition, and through application of the Cayley-Hamilton theorem, $K_q$ is an $A$-invariant subspace (that is $A \vec{v} \in K_q$ for any $\vec{v} \in K_q$). It will also be helpful to define the set of RK coefficients $(A, \vec{b})$ such that $\vec{b} \perp K_q$, namely:
\begin{equation}\label{def:AV_wso}
    \WSOset := \left\{ A \in \mathbb{R}^{s \times s}, \vec{b} \in \mathbb{R}^s \mid 
    \vecpower{b}{T} A^{j} \vec{\tau}^{(k)}=0, \; \text{for} \ 0 \leq j \leq s-1, \; 1 \leq k \leq q\right\}.
\end{equation}
\emph{Weak stage order} can be defined in one of two equivalent ways.
\begin{definition}(Weak stage order, version 1)\label{dfn:wso1}
	The weak stage order $q$ of an $s$-stage RK scheme $(A,\vec{b})$ is the largest integer for which $\vec{b} \perp K_q$, i.e., $(A, \vec{b}) \in \mathbb{W}_q$. If $\vec{b} \perp K_q$ holds for every $q \geq 1$, then $q = \infty$.
\end{definition}
The second abstract version makes use of invariant subspaces. 
\begin{definition}(Weak stage order, version 2)\label{dfn:wso2}
    The weak stage order $q$ of an $s$-stage RK scheme $(A,\vec{b})$ is the largest integer for which there exists an $A$-invariant vector space $V$ such that: $\vec{\tau}^{(k)} \in V$ for $1 \leq k \leq q$ ~and~ $\vecpower{b}{T} y = 0$ for all $y \in V$. If $\vec{\tau}^{(k)} \in V$ for all $k \geq 1$, then $q = \infty$.
\end{definition}
These two definitions of weak stage order are equivalent (i.e., taking $V = K_q$ in \cref{dfn:wso2}). Moreover, weak stage order is the most general criterion to guarantee \eqref{Eq:WSOMotivation}, thereby avoiding order reduction in (stiff) linear problems \cite{ketcheson2018dirk}.

In addition to the set of schemes that satisfy the WSO equations, we also introduce the set of $p$th order schemes:
\begin{equation}\label{def:AV_oc}
    \OCset := \left\{ A \in \mathbb{R}^{s\times s}, \vec{b} \in \mathbb{R}^s \; | \; (A,\vec{b}) \textrm{ satisfy all order conditions up to order } p \right\}. 
\end{equation}
A list of all order conditions up to $p = 5$ is given in Table~\ref{Table:ButcherOrderconditions}. 

We now discuss the polynomial equations defining RK schemes with order $p$ and WSO $q$, i.e., the set $\WSOset \cap \OCset$. Notice that the solutions to the WSO equations (i.e., schemes in $\mathbb{W}_q$) are of the same form as the conditions $\SCondAlb(\xi)$ \eqref{Scond}, but they are required to hold for a larger set of values $j, k$. Since the conditions \eqref{Scond} appear explicitly in the RK order conditions as formulated by Albrecht \cite{albrecht1987}, there is some overlap or redundancy between the conditions for WSO $q$ that define $\WSOset$ and the conditions for order $p$ that define $\OCset$.
In the more widely used formulation of RK order conditions due to Butcher (which we will also employ later), one instead has the related conditions (cf.~\eqref{Table:ButcherOrderconditions})
\begin{equation*} 
    \phi_{\ell,k} := \vecpower{b}{T}A^{\ell-k}\vecpower{c}{k} - \frac{k!}{(\ell+1)!} = 0
        \ \ \forall\, 0 \leq k \leq \ell \leq p-1\;.
\end{equation*}
The expressions appearing in the WSO conditions are just linear combinations of these
$\phi_{\ell,k}$:
\begin{equation*} 
    \vecpower{b}{T}A^j\vec{\tau}^{(k)} = \vecpower{b}{T}A^j\left( A \vecpower{c}{k-1}-\frac{1}{k}\vecpower{c}{k}\right) = \phi_{j+k,k-1}-\frac{1}{k}\phi_{j+k,k}
    \ \ \forall\, j \geq 0, k \geq 1\;.
\end{equation*}
Therefore, if a method has WSO $q$ then each of the conditions $\phi_{\ell,k}=0$ for $1\le k\le q$ is equivalent. When constructing RK schemes with high WSO, we can therefore pick just one $\phi_{\ell,k}=0$ from each equivalent family. Table~\ref{Table:redundancy_summary} summarizes which order conditions we keep (and which we discard as redundant) in the construction of $\WSOset \cap \OCset$.

Meanwhile, the WSO equations independently contain some redundancy; the Cayley-Hamilton theory used in \cref{dfn:wso1} overestimates the set of equations required to define an invariant subspace $K_q$. The following section discusses how to construct a low-dimensional subspace $K_q$ by removing redundant equations in the definition of $\WSOset$.

\begin{table}
\centering
\begin{tabular}{ |@{~}c@{~}|@{~}c@{~~}c@{~~}c@{~~}c@{~~}c| } 
\hline
    Order     & $T(p)$ & $B(p)$ & Related & \multicolumn{2}{c|}{Additional} \\ 
              &        &        & to $S(p)$ & \multicolumn{2}{c|}{Order Conditions}  \\
\hline
\hline
 $p=1$ & $\vecpower{b}{T} \vec{e}\phantom{^2} = 1$ & & & &\\ 
\hline
 $p=2$ & $\vecpower{b}{T}\!A \vec{e}\phantom{^2} = \frac{1}{2}$ & & & &\\ 
\hline
 $p=3$ & $\vecpower{b}{T}\!A^2 \vec{e} = \frac{1}{3!}$ & $\vecpower{b}{T} \vecpower{c}{2} = \frac{1}{3}$ & & &\\ 
\hline
 $p=4$ & $\vecpower{b}{T}\!A^3 \vec{e} = \frac{1}{4!}$ & $\vecpower{b}{T} \vecpower{c}{3} = \frac{1}{4}$ & 
 $\vecpower{b}{T}\!A \vecpower{c}{2} = \frac{1}{12}$  & $\vecpower{b}{T} C A \vec{c} = \frac{1}{8}$ &
 \\
\hline
 $p=5$ & $\vecpower{b}{T}\!A^4 \vec{e} = \frac{1}{5!}$ & $\vecpower{b}{T} \vecpower{c}{4} = \frac{1}{5}$ & 
 $\vecpower{b}{T}\!A^2\vecpower{c}{2}  = \frac{1}{60}$ & $\vecpower{b}{T}C^2\!A\vec{c}  = \frac{1}{10}$ & $\vecpower{b}{T}CA\vecpower{c}{2}  = \frac{1}{15}$ 
  \\ 
 & & & 
 $\vecpower{b}{T}\!A\vecpower{c}{3}  = \frac{1}{20}$ & $\vecpower{b}{T}CA^2\vec{c}  = \frac{1}{30}$ & $\vecpower{b}{T}\!ACA\vec{c}  = \frac{1}{40}$ \\
 & & &  & $\vecpower{b}{T}DA\vec{c}  = \frac{1}{20}$ &  \\
\hline
\end{tabular}
\vspace{.2em}
\caption{Order conditions (in Butcher's notation): Here $D = \mathrm{diag}(A\vec{c})$, $C = \mathrm{diag}(\vec{c})$, and $\vec{c} = A \vec{e}$.}
\label{Table:ButcherOrderconditions}
\end{table}


\begin{table}
\centering
    \begin{tabular}{ | c | c | c | c | c |}
      \hline
      \thead{$(p,q)$} & \thead{$n_p$} & 
      \thead{$\#$ redundant eq.}
      \thead{$\frac{(p-1)(p-2)}{2}$} & \thead{redundant $\phi_{j,k}$} & \thead{$\phi_{j,k}$ kept} \\
      \hline
            \hline
       $(3,2)$ & $4$  & $1$ & $\phi_{2,1}$ & $\phi_{1,1}$, $\phi_{2,2}$ \\
      \hline
       $(3,3)$ & $4$  & $1$ & $\phi_{2,1}$ & $\phi_{1,1}$, $\phi_{2,2}$ \\
      \hline
       $(4,3)$ & $8$  & $3$ & $\phi_{2,1},\phi_{3,2},\phi_{3,1}$ & 
       $\phi_{1,1}$, $\phi_{2,2}$, $\phi_{3,3}$\\
      \hline
       $(4,4)$ & $8$  & $3$ & $\phi_{2,1},\phi_{3,2},\phi_{3,1}$ & 
       $\phi_{1,1}$, $\phi_{2,2}$, $\phi_{3,3}$\\
      \hline
       $(5,4)$ & $17$  & $6$ & \makecell{$\phi_{2,1},\phi_{3,2},\phi_{4,3}$ \\ $\phi_{3,1},\phi_{4,2},\phi_{4,1}$} & 
       \makecell{$\phi_{1,1}$, $\phi_{2,2}$, $\phi_{3,3}$ \\ $\phi_{4,4}$} \\
      \hline
      $(5,5)$ & $17$  & $6$ & \makecell{$\phi_{2,1},\phi_{3,2},\phi_{4,3}$ \\ $\phi_{3,1},\phi_{4,2},\phi_{4,1}$} & 
      \makecell{$\phi_{1,1}$, $\phi_{2,2}$, $\phi_{3,3}$ \\ $\phi_{4,4}$}
       \\
      \hline
    \end{tabular}
\vspace{.2em}
\caption{Given order $p$ and WSO $q$, the numbers $n_p$ and $\frac{(p-1)(p-2)}{2}$ are the total, and redundant number of order condition, respectively. The last two columns show which $\phi_{\ell,k}$ we retain, vs.\ discard as redundant (since they are already implied by WSO and the retained conditions).}
\label{Table:redundancy_summary}
\end{table}

\section{Results from Weak Stage Order Theory}
\label{sec:WSOTheory_Results}
In this section we summarize key theoretical results from the companion paper \cite{BiswasKetchesonSeiboldShirokoff2022}---which we use here to construct DIRK schemes with WSO greater than 3. The main results consist of (i)~lower bounds on the number of stages required to obtain WSO greater than 3 (in terms of the order $p$); and (ii)~formulas for constructing $K_q$. 

To start, we first introduce the minimal polynomial for $K_q$ which plays a central role in the results. Let $d = \dim(K_q)$ denote the dimension of $K_q$ and $\vec{w}_j$, for $j=1,\ldots, d$, be a basis for $K_q$. Let
\begin{equation*}
W := \begin{pmatrix} \vec{w}_1 | \vec{w}_2 | \cdots | \vec{w}_d \\\end{pmatrix}\in \mathbb{R}^{s\times d}\;.
\end{equation*}
Since the column space of $W$ is $A$-invariant, there is a square matrix $B \in \mathbb{R}^{d\times d}$ such that
\begin{equation}\label{Eq:A_inv_space}
    A W = W B\;.
\end{equation}
Equation \eqref{Eq:A_inv_space} simply states that each vector $A w_j$ can be expressed as a linear combination of vectors $w_i$, for $i = 1, \ldots, d$. 

The \emph{minimal polynomial} \cite[Chapter~8 \& 9A]{Axler2015} (see also \cite[Chapter~XIV \S 2]{lang2002}) $p(x)$ of a matrix $B$ is the (non-zero) monic polynomial of smallest degree for which $p(B) = 0$.  While it is often the case (for instance when the eigenvalues of $B$ are distinct) that the minimal polynomial is the characteristic polynomial, in general $p(x)$ may be of lower degree than $\textrm{char}_B(x)$ when $B$ has repeated eigenvalues (e.g., if $B = I$ is the $s\times s$ identity matrix then $p(x) = x-1$ while $\textrm{char}_B(x) = (x-1)^s$).

We define the \emph{minimal polynomial} $P(x)$ \emph{of} $K_q$ as the minimal polynomial of $B$ in \eqref{Eq:A_inv_space}. Note that $P(x)$ is intrinsic to the subspace $K_q$ and remains invariant under a change of basis. That is, if $W' = W T$ for an invertible matrix $T$ is an alternative basis for $K_q$, then \eqref{Eq:A_inv_space} reads $A W' = W' B'$ where $B' = T^{-1} B T$ is just a conjugation of $B$. Since $P(B) = 0$ is equivalent to $P(B') = 0$, the $P(x)$ does not depend on the choice of basis for $K_q$. 

The minimal polynomial $P(x)$ of $K_q$ satisfies several important properties which follow from the linear algebra of matrices restricted to invariant subspaces. We summarize them here (without proof), along with their implications for DIRK schemes with WSO. 
\begin{enumerate}
    \item [a)] $P(x)$ is the lowest degree (non-zero\footnote{In the case when $K_q = \{ 0\}$, $P(x) = 1$ is the constant polynomial.}, monic\footnote{The highest-power coefficient is $1$.}) polynomial that satisfies 
    \begin{equation}\label{Eq:MinPoly}
        P(A) \vec{w} = 0, \quad \forall \vec{w} \in K_q\;.
    \end{equation}
    Due to the Krylov structure of $K_q$, relation \eqref{Eq:MinPoly} can be restated in terms of the vectors $\vec{\tau}_k$ as:
    \begin{equation}\label{Eq:MinPoly2}
        P(A) \vec{\tau}_k = 0, \quad \textrm{for} \; k = 2, \ldots, q\;.
    \end{equation}
    \item[b)] $P(x)$ divides the characteristic polynomial of $B$. Thus, 
    \begin{equation*} 
        \deg(P) \leq \dim(K_q)\;.
    \end{equation*}
    \item[c)] $P(x)$ divides the characteristic polynomial of $A$. Hence, every root of $P(x)$ is an eigenvalue of $A$. For DIRK schemes, the roots of $P(x)$ are then a subset of the diagonal entries of $A$, i.e., $\{a_{11}, a_{22}, \ldots, a_{ss}\}$.  
\end{enumerate}
We now may summarize the key results from \cite{BiswasKetchesonSeiboldShirokoff2022}. The first result is a limitation theorem on high WSO.
\begin{theorem}\label{Thm:WSOLimiation} 
    (from \cite{BiswasKetchesonSeiboldShirokoff2022})
    A DIRK scheme with invertible $A$ and minimal polynomial satisfying $\deg(P) \leq 1$ is limited to WSO $q \leq 3$. 
\end{theorem}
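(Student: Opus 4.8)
The plan is to show that the hypotheses force the \emph{eigenvector criterion} $(A-rI)\vec{\tau}^{(k)}=\vec{0}$, and then to rule out $q\ge 4$ by a short elimination. First I would dispose of the trivial range $q\le 1$, and for $q\ge 2$ note that $\vec{\tau}^{(2)}\neq \vec{0}$: its first component equals $\tfrac12 a_{11}^2\neq 0$, since invertibility of the lower-triangular $A$ forces $a_{11}\neq 0$. Hence $K_q\neq\{\vec{0}\}$, so $P(x)\neq 1$ and therefore $\deg(P)=1$, say $P(x)=x-r$. By property (c) the root $r$ is an eigenvalue of $A$, i.e.\ a diagonal entry, and invertibility gives $r\neq 0$. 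Relation \eqref{Eq:MinPoly2} then reads $(A-rI)\vec{\tau}^{(k)}=\vec{0}$ for $2\le k\le q$: every stage-order residual is an eigenvector of $A$ for the eigenvalue $r$.

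Next I would recast these conditions in a form the lower-triangular structure can exploit. Writing $\vec{w}_k:=(A-rI)\vecpower{c}{k}$, the condition $(A-rI)\vec{\tau}^{(k)}=\vec{0}$ is equivalent to the recursion $\vec{w}_k = k\,A\,\vec{w}_{k-1}$; since $\vec{w}_1=(A-rI)A\vec{e}=A\vec{w}_0$ with $\vec{w}_0=\vec{c}-r\vec{e}$ holds automatically from $\vec{c}=A\vec{e}$, this telescopes to
\[
(A-rI)\vecpower{c}{k} = k!\,A^{k}(\vec{c}-r\vec{e}),\qquad 2\le k\le q.
\]
Reading the first component (using $a_{11}=c_1$) collapses this to $(c_1-r)c_1^{k}(1-k!)=0$, so $c_1=r$. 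More generally, let $i^\star$ be the smallest index with $c_{i^\star}\neq r$ (so $c_j=r$ for $j<i^\star$). Because $A$ and $A^{k}$ are lower triangular and $\vec{c}-r\vec{e}$ vanishes in its first $i^\star-1$ entries, the $i^\star$-th component decouples completely: with $\xi:=c_{i^\star}/r$ and $\rho:=a_{i^\star i^\star}/r$ the conditions for $k=2,3,4$ reduce to the scalar system
\[
(\xi-\rho)+(\rho-1)\xi^{k}-k!\,\rho^{k}(\xi-1)=0,\qquad k=2,3,4,
\]
subject to $\xi\neq 1$ (as $c_{i^\star}\neq r$) and $\rho\neq 0$ (invertibility).

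The crux, and the step I expect to be the main obstacle, is to show this three-equation, two-unknown system is inconsistent. I would eliminate $\xi$ by subtracting consecutive equations and dividing out the factor $(\xi-1)$, obtaining two expressions for $\xi$ together with a relation for $\xi^{2}$; equating these produces two cubic constraints in $\rho$ alone. A short elimination (reducing one cubic modulo the other to a quadratic, then reducing again) shows that any common root would have to equal a single rational value that then fails one of the constraints, so no common root exists. Hence no admissible $(\xi,\rho)$ exists, contradicting the existence of any stage with $c_{i^\star}\neq r$. I would verify the degenerate branches of the elimination ($\rho=1$, $3\rho-1=0$, $1-2\rho^{2}=0$) separately, each of which forces $\xi=1$ or an immediate contradiction.

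It remains to treat the surviving possibility $c_i=r$ for all $i$, i.e.\ $\vec{c}=r\vec{e}$. Then $\vec{\tau}^{(k)}=r^{k}\bigl(1-\tfrac1k\bigr)\vec{e}$, so $\vec{e}\in K_q$ for $q\ge 2$, and $\vec{b}\perp K_q$ would force $\vecpower{b}{T}\vec{e}=0$, contradicting the consistency condition $\vecpower{b}{T}\vec{e}=1$. Thus WSO $q\ge 4$ is impossible in every case, giving $q\le 3$. The one genuinely computational ingredient is the inconsistency of the scalar system; everything else is bookkeeping built on the reformulation and the lower-triangular structure.
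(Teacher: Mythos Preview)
Your argument is correct. The paper itself does not prove this theorem here---it is quoted from the companion paper \cite{BiswasKetchesonSeiboldShirokoff2022}---so there is no in-text proof to compare against directly. Your route is precisely the one the paper's introduction alludes to: $\deg(P)\le 1$ forces the \emph{eigenvector criterion} $(A-rI)\vec{\tau}^{(k)}=\vec{0}$, and the barrier $q\le 3$ for that criterion is exactly the result of \cite{ketcheson2018dirk, BiswasKetchesonSeiboldShirokoff2022} referenced in \S\ref{sec:introduction}.

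A few minor points worth tightening. First, the telescoping identity $(A-rI)\vecpower{c}{k}=k!\,A^{k}(\vec{c}-r\vec{e})$ requires the recursion at all intermediate $k$, so it is valid for $2\le k\le q$; you use it only for $k\le 4$, which is fine once $q\ge 4$ is assumed. Second, in the confluent branch $\vec{c}=r\vec{e}$ you invoke $\vecpower{b}{T}\vec{e}=1$; this is not part of Definition~\ref{dfn:wso1} per se but is the standing consistency assumption for an RK scheme, so it is legitimate---just worth stating explicitly. Third, the elimination is sound: the two cubics $90\rho^{3}-162\rho^{2}+63\rho-7=0$ and $6\rho^{3}-9\rho+2=0$ reduce (eliminating $\rho^{3}$) to $162\rho^{2}-198\rho+37=0$, and a further reduction yields a single rational candidate which fails the quadratic; together with the separately-checked degenerate branches ($\rho=1$, $\rho=\tfrac13$, $2\rho^{2}=1$) this closes the case.
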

The practical implication of theorem \ref{Thm:WSOLimiation} is that WSO $q > 3$ requires a minimal polynomial $\deg(P) \geq 2$. The next theorem demonstrates how the WSO $q$ impacts the number of stages $s$ required to achieve a given order $p$. 
\begin{theorem}\label{Thm:BoundonDimK} 
    (from \cite{BiswasKetchesonSeiboldShirokoff2022})
    An $s$-stage DIRK scheme with $n_c$ distinct abscissa values, order $p \geq 1$, and weak stage order $q \leq 2n_c - 1$ (with $K_q$ and $P(x)$ defined in \eqref{Def:Kspace} and \eqref{Eq:MinPoly2}) satisfies
    \begin{equation*} 
        s - p + 1 - \sigma \geq \dim{(K_q)} \geq \left\lfloor \frac{q}{2}\right\rfloor,
    \end{equation*}
    where $\sigma = 1$ if the method is stiffly accurate, and $\sigma = 0$ otherwise. 
\end{theorem}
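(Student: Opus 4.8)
The plan is to treat the two inequalities separately, in both cases exploiting that $K_q$ is $A$-invariant (so $K_q^\perp$ is $A^T$-invariant) together with the residual identity $\vec{\tau}^{(k)} = A\vecpower{c}{k-1}-\tfrac1k\vecpower{c}{k}$ and $\vec{c}=A\vec{e}$. The single computation feeding both halves is this: since $P(A)\vec{\tau}^{(k)}=0$ for $2\le k\le q$ (property (a), eq.~\eqref{Eq:MinPoly2}), setting $\vec{w}_k := P(A)\vecpower{c}{k}$ turns the residual identity into the recursion $A\vec{w}_{k-1}=\tfrac1k\vec{w}_k$, so that $\vec{w}_k = k!\,A^{k-1}\vec{w}_1$ for $1\le k\le q$. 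Equivalently, reducing modulo $K_q$ (which contains every $\vec{\tau}^{(k)}$ with $k\le q$ and is $A$-invariant) gives $\vecpower{c}{k}\equiv k!\,A^{k-1}\vec{c}\pmod{K_q}$ for $1\le k\le q$; in the quotient the power vectors collapse onto a single Krylov chain generated by $\vec{c}$.

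\emph{Lower bound.} I would deduce $\dim(K_q)\ge\lfloor q/2\rfloor$ from property (b), $\deg(P)\le\dim(K_q)$, by showing $\deg(P)\ge\lfloor q/2\rfloor$ --- a quantitative strengthening of Theorem~\ref{Thm:WSOLimiation} (which is the case $\deg(P)\le 1\Rightarrow q\le 3$). The extra input is that $\vec{c}$ takes only $\numc$ distinct values, so $\mathrm{diag}(\vec{c})$ has minimal polynomial of degree $\numc$ and $\vecpower{c}{\numc}$ is a fixed linear combination of $\vec{e},\vec{c},\dots,\vecpower{c}{\numc-1}$. Applying $P(A)$ to that relation and substituting $\vec{w}_k=k!\,A^{k-1}\vec{w}_1$ converts it into a degree-$\numc$ polynomial relation annihilating $P(A)\vec{e}$; chasing the dimensions of the resulting Krylov spaces, and using $q\le 2\numc-1$ to keep the two descriptions of $\mathrm{span}\{\vec{w}_1,\dots,\vec{w}_q\}$ (one as a Krylov chain of length up to $q$, one as the $P(A)$-image of the $\numc$-dimensional space of powers of $\vec{c}$) in the regime where they constrain one another, should force $\deg(P)\ge\lfloor q/2\rfloor$.

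\emph{Upper bound.} Here I would exhibit $p-1+\sigma$ linearly independent vectors inside the $A^T$-invariant space $K_q^\perp$, which contains $\vec{b}$ by the WSO hypothesis. The natural candidates are the iterates $\vec{b},A^T\vec{b},\dots,(A^T)^{p-2}\vec{b}$; pairing them against the test vectors $\vecpower{c}{k}$ and invoking the order conditions $\vecpower{b}{T}A^{j}\vecpower{c}{k}=k!/(j+k+1)!$ (valid for $j+k\le p-1$) reduces independence to the nonvanishing of a Hilbert-type Hankel determinant built from the moments $1/(\ell+1)!$. When the scheme is stiffly accurate, $\vecpower{b}{T}=\vec{u}_s^{\,T}A$ (with $\vec{u}_s$ the $s$-th standard unit vector) gives $\vec{u}_s\in K_q^\perp$, sitting one step ahead of $\vec{b}$ in the same $A^T$-orbit, which supplies the extra $\sigma=1$ vector. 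Combining the count with the lower bound and $\dim(K_q)+\dim(K_q^\perp)=s$ then yields $s-p+1-\sigma\ge\dim(K_q)$.

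\emph{Main obstacle.} The hard part will be getting the constants sharp. Run naively through a single Krylov orbit, the moment argument only certifies about $\lceil p/2\rceil$ independent directions, because the known moments $k!/(j+k+1)!$ occupy only the triangle $j+k\le p-1$: a dependency among $\{(A^T)^i\vec{b}\}$ yields just two clean scalar equations rather than enough to conclude. Pushing the count to the full $p-1$ is where the DIRK structure --- lower-triangularity of $A$, the roots of $P$ lying among the diagonal entries (property (c)), and the bookkeeping of $\numc$ distinct abscissae under $q\le 2\numc-1$ --- must enter essentially; correctly interfacing that structure with the order-condition moments, and verifying the relevant Hilbert--factorial Hankel matrices are nonsingular, is the crux of the argument.
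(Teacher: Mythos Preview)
This theorem is quoted from the companion paper \cite{BiswasKetchesonSeiboldShirokoff2022} and is not proved in the present paper, so there is no in-paper proof to compare your attempt against. Your core identity $\vecpower{c}{k}\equiv k!\,A^{k}\vec{e}\pmod{K_q}$ for $1\le k\le q$ is correct and is the right backbone for both halves; the lower-bound sketch via $\deg P\ge\lfloor q/2\rfloor$ points in the right direction, although the decisive dimension count that actually pins down $\lfloor q/2\rfloor$ and explains why the hypothesis $q\le 2n_c-1$ is exactly what is needed is not yet written out.

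The genuine gap is in the upper bound, and it is not merely a matter of sharpening constants: your target ``$\vec{b},A^T\vec{b},\dots,(A^T)^{p-2}\vec{b}$ are linearly independent'' may simply be the wrong statement. Independence of those iterates is equivalent to the moment sequence $\mu_\ell=\vecpower{b}{T}A^\ell\vec{e}$ admitting no linear recurrence of order $\le p-2$, i.e., to the reduced denominator of $R(z)$ having degree at least $p-1$; normality of the Pad\'e table for $e^z$ only forces $\deg N+\deg D\ge p$, so the reduced denominator degree can in principle be as small as $\lceil p/2\rceil$, and nothing you have written rules that out. The DIRK hypothesis is certainly essential --- the inequality $\dim K_q\le s-p+1$ fails outright for Gauss--Legendre schemes, where $p=2s$, $n_c=s$, and $\dim K_q=0$ for $q\le s$ --- but it does not enter by boosting the Hankel/moment rank along this single $A^T$-orbit of $\vec{b}$. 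A route more likely to succeed is to argue that the quotient $\mathbb{R}^s/K_q$, with induced operator $\bar A$ and the descended functional $\bar{\vec{b}}$, inherits enough DIRK structure (real spectrum from the $a_{ii}$, hence triangularizability) that the classical DIRK order barrier $p\le \bar s+1$ applies to the quotient scheme, yielding $s-\dim K_q=\bar s\ge p-1$ directly; your plan does not yet supply such a mechanism. A smaller issue: your claim that $\vec{e}_s\in K_q^\perp$ in the stiffly accurate case rests on $\tau^{(k)}_s=\vecpower{b}{T}\vecpower{c}{\,k-1}-\tfrac1k=0$, which is the condition $B(k)$ and therefore needs $k\le p$; the theorem does not assume $q\le p$.
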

A classical result in RK theory (e.g.~\cite[Theorem~4.18]{wanner1996solving}) is that DIRK schemes have order limited to $p \leq s + 1$. Theorem \ref{Thm:BoundonDimK} highlights that the ``gap'' in this bound is exactly what enables WSO q. 

Our goal here is to construct schemes with $q > 3$. Motivated by the implications of theorem \ref{Thm:WSOLimiation} and theorem \ref{Thm:BoundonDimK}, we choose $\dim (K_q)$ as small as possible, that is $\dim(K_q) = 2$ (and hence $\deg(P) \leq 2$). Theorem \ref{Thm:BoundonDimK} then requires the number of stages to be $s \geq p + 1 + \sigma$, and limits $q$ to $q \leq 5$ (which suffices for this work; however, the theory also allows for $q>5$ if $\dim(K_q)>2$). Furthermore, the following theorem (also from \cite{BiswasKetchesonSeiboldShirokoff2022}) characterizes the roots of $P(x)$ when $q > 3$:

\begin{theorem}\label{Thm:SecondRootP}  (Minimal polynomial when $\deg(P) = 2$; \cite{BiswasKetchesonSeiboldShirokoff2022}) 
    Consider an irreducible DIRK scheme with invertible $A$ and WSO $q > 3$. If $K_q$ has a minimal polynomial with $\deg(P) = 2$, then 
    \begin{equation}\label{Eq:SecondRootP}
        P(x) = (x- a_{11})(x - a_{22})\;.
    \end{equation}
\end{theorem}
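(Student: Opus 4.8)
The plan is to use the two structural facts already available in the excerpt: by property~(c) the two roots of $P$ are diagonal entries of $A$, and by \eqref{Eq:MinPoly2} we have $P(A)\vec{\tau}^{(k)}=0$ for $k=2,\ldots,q$. Since $A$ is lower-triangular, so is $P(A)$, and I would exploit this triangular structure component by component. Writing $P(x)=(x-r_1)(x-r_2)$ with $r_1,r_2\in\{a_{11},\ldots,a_{ss}\}$, the goal is to show $\{r_1,r_2\}=\{a_{11},a_{22}\}$.

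First I would pin down one root. The first component of $P(A)\vec{\tau}^{(k)}=0$ reads $P(a_{11})\,\tau^{(k)}_1=0$, because the first row of the lower-triangular matrix $P(A)$ has only its $(1,1)$-entry nonzero. Using $\vec{c}=A\vec{e}$ and lower-triangularity one has $c_1=a_{11}$, whence $\tau^{(k)}_1=a_{11}c_1^{k-1}-\tfrac1k c_1^k=a_{11}^k(1-\tfrac1k)$. Since $A$ is invertible, all diagonal entries are nonzero, so $\tau^{(2)}_1=\tfrac12 a_{11}^2\neq 0$; as $q>3$ guarantees that $k=2$ is among the available indices, we conclude $P(a_{11})=0$. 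Thus $a_{11}$ is always a root, and it remains to identify the second root as $a_{22}$.

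Second, I would show $P(a_{22})=0$ by contradiction. Write $P(x)=(x-a_{11})(x-r)$ and suppose $P(a_{22})\neq 0$. The second component of $P(A)\vec{\tau}^{(k)}=0$ then forces $\tau^{(k)}_2=\mu\,\tau^{(k)}_1$ for a constant $\mu$ independent of $k$, for every $k=2,\ldots,q$. Substituting the closed forms $\tau^{(k)}_1=a_{11}^k(1-\tfrac1k)$ and $\tau^{(k)}_2=a_{21}c_1^{k-1}+a_{22}c_2^{k-1}-\tfrac1k c_2^k$ and clearing the factor $1/k$, this identity becomes a vanishing linear combination (in the index $k$) of the four scalar sequences $a_{11}^k,\ k\,a_{11}^k,\ c_2^k,\ k\,c_2^k$, in which the coefficient of $c_2^k$ equals $-1$. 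When $c_1\neq c_2$ and $c_2\neq 0$ these four sequences are linearly independent, so evaluating at four consecutive values of $k$ (available once $q\geq 5$) forces all coefficients to vanish, contradicting the coefficient $-1$. Hence $a_{22}$ is a root and $P(x)=(x-a_{11})(x-a_{22})$.

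The main obstacle is the degenerate configurations in which this linear-independence argument collapses. When $c_1=c_2$ the vectors $(\tau^{(k)}_1,\tau^{(k)}_2)$ all align with a single eigenvector of the leading $2\times2$ block of $A$ (indeed one checks $\tau^{(k)}_2=\tau^{(k)}_1$ there), so the second component carries no information about $r$ and one is forced to bring in the rows of $A$ beyond the first two. The borderline case $q=4$ likewise supplies only the indices $k\in\{2,3,4\}$, too few to separate the four sequences above. In these branches I expect the argument to rely essentially on irreducibility---which excludes coincidences such as $g_1=g_2$ and forces a genuine dependence of the later stages on the first two---together with invertibility of $A$ (so that $a_{ii}\neq 0$, and a case such as $c_2=0$ can be ruled out directly since it would yield $a_{22}=0$). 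Handling these degenerate branches cleanly, rather than the generic linear-algebra step, is where the real work lies.
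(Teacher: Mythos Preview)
The paper does not supply its own proof of this theorem: it is stated as a result imported from the companion paper \cite{BiswasKetchesonSeiboldShirokoff2022}, so there is no in-paper argument to compare your proposal against. I can therefore only comment on the proposal itself.

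Your first step, pinning down $a_{11}$ as a root via the first component of $P(A)\vec\tau^{(k)}=0$, is correct and complete. For the second root you correctly observe that if $P(a_{22})\neq0$ then the upper-left $2\times2$ block of $P(A)$ has one-dimensional kernel, forcing $\tau^{(k)}_2=\mu\,\tau^{(k)}_1$ with a $k$-independent $\mu$; in fact $\mu=-a_{21}/(a_{22}-a_{11})$, which you could make explicit. The degeneracy $c_1=c_2$ you flag is genuinely excluded by the irreducibility hypothesis, via precisely the $r$-confluent reduction recorded in Lemma~\ref{Lem:ETReducibility} of this paper, so that branch is not an obstacle. However, your remark that ``$c_2=0$ can be ruled out directly since it would yield $a_{22}=0$'' is not right: $c_2=a_{21}+a_{22}$, so $c_2=0$ only gives $a_{21}=-a_{22}$, not $a_{22}=0$. (That case is actually easy anyway: if $c_2=0$ then $\tau^{(k)}_2=a_{21}a_{11}^{k-1}$ and the ratio $\tau^{(k)}_2/\tau^{(k)}_1=\frac{a_{21}}{a_{11}}\cdot\frac{k}{k-1}$ is manifestly non-constant in $k$.)

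The real gap is the one you already identify: for $q=4$ you have only three equations $k=2,3,4$ against the four sequences $\alpha^k,\,k\alpha^k,\,\beta^k,\,k\beta^k$, so the confluent-Vandermonde argument does not close. At that point all of $a_{11},a_{21},a_{22}$ (hence $\mu$) are fixed by the scheme, so the question is whether the three polynomial relations $\sigma_2=\sigma_3=\sigma_4=0$ can simultaneously hold on the irreducible, invertible locus; showing they cannot---or that whenever they do the minimal polynomial collapses to degree one, contradicting $\deg(P)=2$---is exactly the missing piece. Your outline is a reasonable plan, but as written it is incomplete for the boundary case $q=4$ that the theorem must also cover.
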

Note that $P(x)$ in \eqref{Eq:SecondRootP} is valid for both cases $a_{11} = a_{22}$ and $a_{11} \neq a_{22}$.

We turn our attention to constructing spaces $K_q =\operatorname{span}\{\vec{w}_{1},\vec{w}_{2}\}$ with $\dim(K_q) = 2$ and $\deg(P) = 2$ (otherwise, via theorem \ref{Thm:WSOLimiation}, $\deg(P) \in \{0, 1\}$ would result in WSO 3 or less). Theorem \ref{Thm:SecondRootP} requires:
\begin{equation}\label{Eq:Basis}
    (A-a_{11}I)(A - a_{22}I) \vec{w}_{j} = 0, \ \ j = 1,2\;.
\end{equation}
If $\vec{w}_{k}$ are chosen as eigenvectors of $A$, then (without loss of generality) the solution to \eqref{Eq:Basis} is exactly one of:
\begin{align}\label{Eq:FirstTwoEV_1}
        \textrm{When } &a_{11} \neq a_{22}: \quad \quad 
        A \vec{w}_{1}  = a_{11} \vec{w}_{1} \quad \textrm{and} \quad
        A \vec{w}_{2}  = a_{22} \vec{w}_{2}\;, \\
        \label{Eq:FirstTwoEV_2}
        \textrm{When } &a_{11} = a_{22}: \quad \quad 
        A \vec{w}_{1}  = a_{11} \vec{w}_{1} \quad \textrm{and} \quad
        A \vec{w}_{2} = a_{11} \vec{w}_{2} + \vec{w}_{1}\;.
\end{align}
Note that no other solution to \eqref{Eq:Basis} is allowed: neither of the monomials $(A - a_{11}I)$ or $(A - a_{22}I)$ in \eqref{Eq:Basis} can individually annihilate both vectors in $K_q$; otherwise the degree of $P$ would be $1$. This forces the vectors $\vec{w}_{j}$ ($j =1, 2$) to have distinct eigenvalues when $a_{11} \neq a_{22}$, or  $\vec{w}_{2}$ to be generalized eigenvectors when $a_{11} = a_{22}$. 

Finally, the space $K_q$ with $\dim(K_q) = 2$ has the form
\begin{equation}\label{Eq:SOR_eigenbasis}
    \vec{\tau}^{(k)} = \beta_1^{(k)}\vec{w}_{1}+\beta_2^{(k)}\vec{w}_{2}, 
    \; \textrm{for}\; k = 2,3,\ldots, q\;,
\end{equation}
where $\beta_1^{(k)}$, $\beta_2^{(k)}$ are unknown coefficients to be solved for (along with $A$). WSO then may be guaranteed if $\vec{b} \perp K_q$, that is:
\begin{equation}\label{Eq:orth_eigenbasis} 
    \vecpower{b}{T} \vec{w}_{1}=0, \ \ \vecpower{b}{T} \vec{w}_{2}=0\;.
\end{equation}
Together, we will use equations \eqref{Eq:FirstTwoEV_1} and \eqref{Eq:FirstTwoEV_2}, as well as equations \eqref{Eq:SOR_eigenbasis} and \eqref{Eq:orth_eigenbasis} as a (minimal) system of equations for weak stage order (with $\dim(K_q) = 2$).

As a final remark, we discuss a family of DIRK schemes that satisfy the WSO equations \eqref{Eq:FirstTwoEV_1} and \eqref{Eq:SOR_eigenbasis}, yet are reducible to smaller (equivalent) schemes. Identifying and avoiding reducible DIRK schemes is important for the  construction of schemes in the next section. 

We say a scheme is \emph{$r$-confluent} if the abscissas of its first $r$ stages all coincide, i.e., $c_1 = \cdots = c_r$. DIRK schemes that are $r$-confluent are equivalent (specifically, $S$-reducible \cite[Chapter IV.12]{wanner1996solving}) to a simpler DIRK scheme where one stage replaces stages 1 through $r$. Specifically, consider two DIRK schemes in block form
\begin{equation}\label{Eq:BlockMatrix}
    A = \begin{pmatrix}
        A_{11} & 0 \\
        A_{21} & A_{22} 
    \end{pmatrix} \; \textrm{with} \;
    \vec{b} = \begin{pmatrix}
        \vec{b}_1 \\
        \vec{b}_2
    \end{pmatrix},
    \; \textrm{and} \;
    A^* = \begin{pmatrix}
        a_{11} & 0 \\ 
        \vecpower{a}{*}_{21} & A_{22}
    \end{pmatrix} \; \textrm{with} \;
    \vecpower{b}{*} = \begin{pmatrix}
        b_1^* \\
        \vec{b}_2
    \end{pmatrix},
\end{equation}
where $\vecpower{a}{*}_{21} := A_{21} \vec{e}$, is a vector consisting of the row sums of $A_{21}$ and  $b_1^* = \vecpower{b}{T}_1 \vec{e}$ (where $\vec{e}$ here is of length $m=$ number of columns of $A_{21}$). Then, we have
\begin{lemma}\label{Lem:ETReducibility}
    Let $(A, \vec{b})$ be an $s$-stage DIRK scheme with block structure given in \eqref{Eq:BlockMatrix}. If $(A_{11},\vec{b}_1)$ is an $r$-confluent scheme with $r$ stages where $2 \leq r \leq s$, then $(A, \vec{b})$ is reducible to $(A^*, \vecpower{b}{*})$.
\end{lemma}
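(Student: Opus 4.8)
The plan is to establish reducibility in its operational sense, namely that the two schemes are \emph{equivalent}: for every right-hand side $f$ and every step, applying $(A,\vec{b})$ and $(A^*,\vecpower{b}{*})$ to \eqref{eq:IVP} yields the same stage values (outside the first block) and the same update $u_{n+1}$. The whole argument is driven by a single structural fact, that $r$-confluence forces the first $r$ stages of $(A,\vec{b})$ to collapse onto one common value.

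First I would record the consequence of confluence. Because $A$ is lower triangular with vanishing $(1,2)$ block, the first $r$ abscissae are exactly the row sums of $A_{11}$, and in particular $c_1 = a_{11}$; hence $r$-confluence gives $c_1 = \cdots = c_r = a_{11} =: c$, so every first-block stage is evaluated at the single time $t_n + c\,\Delta t$. The heart of the proof is then an induction on $i=1,\dots,r$ showing $g_1 = \cdots = g_r =: g$. In the inductive step I assume $g_1 = \cdots = g_{i-1} = g$ and substitute the trial value $g_i = g$ into the $i$-th stage equation; since $\sum_{j\le i}(A_{11})_{ij} = c_i = c$ and $a_{11} = c$, this reduces the $i$-th equation to the very fixed-point equation $g = u_n + \Delta t\, c\, f(t_n + c\,\Delta t, g)$ that defines $g_1$. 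The same equation defines the single merged stage $g_1^*$ of $(A^*,\vecpower{b}{*})$, so $g_1^* = g$ as well.

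Once the first block has collapsed, the rest is bookkeeping against the definitions $\vecpower{a}{*}_{21} = A_{21}\vec{e}$ and $b_1^* = \vecpower{b}{T}_1\vec{e}$. For each later stage $i > r$ the combined first-block contribution is $\sum_{j\le r}(A_{21})_{i-r,j}\, f(t_n + c\,\Delta t, g) = (\vecpower{a}{*}_{21})_{i-r}\, f(t_n + c\,\Delta t, g)$, because $\vecpower{a}{*}_{21}$ collects precisely those row sums; the retained abscissae likewise agree since $\vec{c}^* = A^*\vec{e}$ preserves row sums. Thus the stage equations for $g_{r+1},\dots,g_s$ coincide verbatim with those of the reduced scheme, and sequential solvability of the lower-triangular $A_{22}$ block gives $g_i = g_i^*$. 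Collapsing the update in the same way, and using $b_1^* = \sum_{j\le r} b_j$, yields $u_{n+1} = u_{n+1}^*$, which is the claimed equivalence.

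I expect the one delicate point to be the uniqueness that underpins the induction: exhibiting $g_i = g$ as \emph{a} solution of the $i$-th stage equation determines $g_i$ only if that implicit equation is uniquely solvable. This is guaranteed wherever the DIRK solves are single-valued (e.g.\ for $\Delta t$ small relative to the Lipschitz constant of $f$), so I would state this standard well-posedness hypothesis explicitly; with it in hand the collapse $g_1 = \cdots = g_r = g_1^*$, and hence the whole reduction, follows cleanly.
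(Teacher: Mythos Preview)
Your argument is correct and essentially self-contained: you prove directly, by induction on the stage index, that $r$-confluence forces $g_1=\cdots=g_r$, and then verify that the row-sum definitions $\vecpower{a}{*}_{21}=A_{21}\vec{e}$ and $b_1^*=\vecpower{b}{T}_1\vec{e}$ make the remaining stage equations and the update of $(A^*,\vecpower{b}{*})$ coincide with those of $(A,\vec{b})$. Your explicit caveat about uniqueness of the implicit stage solves is appropriate and is exactly the well-posedness assumption needed.

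The paper takes a different, citation-based route: it observes that the partition $S_1=\{1,\dots,r\}$, $S_2=\{r+1\},\dots,S_{s-r+1}=\{s\}$ satisfies the defining condition of $S$-reducibility in Hairer--Wanner (the within-class row sums of $A$ over each class are constant across $S_1$ because $c_1=\cdots=c_r$ and the $(1,2)$ block vanishes), and then invokes a theorem of Hundsdorfer to conclude that stages in the same class produce identical values. Your proof is what those references amount to in this special case, carried out by hand; it is longer but requires no external lookups and makes the mechanism (collapse of the first $r$ fixed-point equations to a single one) transparent. The paper's version is shorter and situates the result within the established $S$-reducibility framework, at the cost of depending on two outside sources.
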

\begin{proof}
      Applying Definition~12.17 in \cite{wanner1996solving}, where the partition of equivalent stages (i.e., partition of the integers $\{1, \ldots, s\}$) is taken as $S_{1} = \{1, 2, \ldots, r\}$ and $S_2 = \{r+1\}$, $\ldots$, $S_{s - r} = \{s\}$ shows that the scheme is $S$-reducible. Then \cite[Theorem~2.2]{hundsdorfer1980note} implies that the first $r$ stages of $A$ yield the same intermediate stage value solutions---and thus can be replaced by a single stage.
\end{proof}

\section{Optimization Problem for Finding DIRK Schemes with Desirable Properties}
\label{sec:optProblemDIRK}
In this section we formulate and numerically solve the problem of constructing DIRK schemes with a prescribed order $p$ and WSO $q$ that are A-stable, stiffly accurate (and hence L-stable), and have an optimally small error constant. A-priori, the degrees of freedom in this optimization problem are the coefficients in the matrix $A \in \mathbb{R}^{s \times s}$ and the vector $\vec{b} \in \mathbb{R}^s$. The constraints are as follows:\vspace{.2em}
\begin{enumerate}
    \item[(C.0)] (DIRK structure) $a_{ij} = 0$ for $j>i$;
    \item[(C.1)] ($p$th order conditions) $(A, \vec{b}) \in \OCset$, defined in \eqref{def:AV_oc};
    \item[(C.2)] (weak stage order $q$) $(A, \vec{b}) \in \WSOset$, defined in \eqref{def:AV_wso};
    \item[(C.2')] (WSO if $\dim(K_q) = 2$) $(A, \vec{b})$ satisfy \eqref{Eq:SOR_eigenbasis}, \eqref{Eq:orth_eigenbasis}, and either \eqref{Eq:FirstTwoEV_1} or \eqref{Eq:FirstTwoEV_2};
    \item[(C.3)] (stiff accuracy) $a_{sj} = b_{j}$ for $j = 1,\dots, s$;
    \item[(C.4)] (non-negative abscissas) $c_j \geq 0$ for $j = 1, \dots, s$;
    \item[(C.5)] (A-stability condition 1) $a_{ii} \geq 0$ for $1 \leq i \leq s$;
    \item[(C.6)] (A-stability condition 2) $|R(i y)| \leq 1$ for all $y \in \mathbb{R}$;
    \item[(CR.6)] (relaxation of (C.6)) $|R(i y)| \leq 1$ for $y \in \{y_1, \dots, y_{m} \}$ with $0 \leq y_1 < \ldots < y_{m}$. 
\end{enumerate}

\smallskip
Condition (C.0) enforces the DIRK structure. Conditions (C.1) and (C.2) are simply the order conditions and weak stage order conditions, respectively, while (C.2') is a simplified set of WSO conditions (based on \S\ref{sec:WSOTheory_Results}) for the special case $\dim(K_q) = 2$. Condition (C.3) ensures stiff accuracy and guarantees that the numerical solution is exact in the limit $\Delta t \to 0$ and $\zeta \to -\infty$ \cite{wanner1996solving}. It has the effect of prescribing $\vec{b}$ in terms of $A$ so that the degrees of freedom are the matrix $A$ only. Constraint (C.4) ensures that evaluations of $f(t,u)$ in \eqref{eq:IVP} do not occur prior to the initial time.

Lastly, conditions (C.5) and (C.6) impose A-stability (see \cite[Chapter IV.3 Eqs.~(3.6), (3.7)]{wanner1996solving}), which, combined with (C.3), ensures L-stability. Condition (C.6) is, as written, an infinite set of constraints. While it is possible to recast (C.6) as a finite set of inequalities involving semi-definite matrices (using a connection between non-negative single-variable polynomials and polynomials written as a sum of squares), here we take the simpler approach of approximating (C.6) by imposing it only at a finite set of values on the imaginary axis. Taking advantage of the symmetry $|R(i y)| = |R(-i y)|$ leads to the weaker set of constraints (CR.6).

For the purpose of constructing DIRK schemes with high WSO, we denote the set of equality constraints $\mathcal{C}_{\mathrm{Eq}}$ and inequality constraints $\mathcal{C}_{\mathrm{InEq}}$ by:
\begin{align*}
        \mathcal{C}_{\mathrm{Eq}} &:= \{ (A, \vec{b}) \in \mathbb{R}^{s\times s} \times \mathbb{R}^{s} \;| \; (C.0), (C.1), (C.2'), \textrm{and } (C.3) \textrm{ hold}\}\;, \\
        \mathcal{C}_{\mathrm{InEq}} &:= \{ (A, \vec{b}) \in \mathbb{R}^{s\times s} \times \mathbb{R}^{s} \;| \; (C.4), (C.5), \textrm{and } (CR.6) \textrm{ hold}\}\;.
\end{align*}
A \emph{feasible scheme} is one that satisfies both sets of constraints.

To guide the construction of DIRK schemes that achieve a minimal error, we use the $\ell^2$-norm of the residuals of the $(p+1)$st order conditions as a proxy for the error, leading to the objective function:\vspace{.2em}
\begin{enumerate}
     \item [(Ob)] (Objective function) $F(A,\vec{b}) := ||{(p+1)\textrm{st order conditions}}||_{\ell^2}^2$.
\end{enumerate}
\smallskip
For instance, for $p = 1$ and $p = 2$, the objective function (using $\vec{c} = A\vec{e}$) is:
\begin{align*}
&p = 1: \quad F(A, \vec{b}) = \left( \vecpower{b}{T} A \vec{e} - \tfrac{1}{2} \right)^2, \\
&p = 2: \quad F(A, \vec{b}) = 
\left( \vecpower{b}{T} A^2 \vec{e} - \tfrac{1}{6} \right)^2 + 
\left( \vecpower{b}{T} \vecpower{c}{2} - \tfrac{1}{3} \right)^2.
\end{align*}
If enough degrees of freedom are allowed, then a feasible scheme of order $p$ may satisfy $F(A, \vec{b}) = 0$, and thus be of order $p+1$. However, generally, locally optimal schemes will not satisfy $F(A, \vec{b}) = 0$ exactly. Altogether, we seek optimal DIRK schemes via the following constrained minimization problem: 
\begin{align*}
    \hspace{-2em} \textcolor{black}{(M)} \hspace{2em} \textrm{\rm Minimize~~~} & 
    F(A, \vec{b}) \\
    \textrm{\rm Subject to~}
    & \textrm{(C.0), (C.1), (C.2'), (C.3) and} \\
    & \textrm{(C.4), (C.5), (CR.6)}\;.
\end{align*}

\subsection{Solution to Problem (M)} 
\label{sec:DIRKsWithHighWSOConstruction}
While one can attempt to solve problem $\textcolor{black}{(M)}$ directly via black-box optimization routines, numerical experiments revealed that such a direct approach becomes highly inefficient as $s$, $p$, and $q$ are increased. Plausibly caused by the problem's lack of convexity and ill-conditioned constraints, feasible, let alone optimal, solutions are found increasingly rarely with increasing $s$, $p$, and $q$. In order to facilitate a more robust approach, we instead propose to solve $\textcolor{black}{(M)}$ in two major steps: We first construct a feasible scheme; then, using the feasible scheme as initial guess, we apply a local optimizer to minimize the objective function. 

\smallskip
\emph{1.~Construction of a feasible scheme}: We solve a sequence of sub-problems to find a feasible scheme satisfying both $\mathcal{C}_{\mathrm{Eq}}$ and $\mathcal{C}_{\mathrm{InEq}}$. Steps (1A) and (1B) construct a solution satisfying the equality constraints only. Step (1C) then incorporates the inequality constraints as well. 
  
  \noindent\fbox{Step (1A).} This step utilizes a hybrid analytical and numerical approach to find a point $(A, \vec{b})$ in $\mathcal{C}_{\mathrm{Eq}}$. Below, the substeps~(a)--(e) solve the first $(s-1)$ rows of equation \eqref{Eq:SOR_eigenbasis} and make use of the theory from \S\ref{sec:WSOTheory_Results}; then substep~(f) solves the remaining constraints. All numerical solutions in this step use $\mathrm{MATLAB}$'s \verb|sqp| algorithm in \verb|fmincon|, prescribing a constant objective function in order to use it simply as an algebraic solver.
  \begin{itemize}
  		\item[(a)] Solve (analytically) the first two components of the eigenvectors $\vec{w}^{(1)}$, $\vec{w}^{(2)}$: there are two solution branches, one corresponding to \eqref{Eq:FirstTwoEV_1} and another \eqref{Eq:FirstTwoEV_2}.  We restrict our solutions to the branch \eqref{Eq:FirstTwoEV_1} where $a_{11} \neq a_{22}$. The alternative case, $a_{11} = a_{22}$ is also possible but not pursued here.
  		\item[(b)] Solve (analytically) for $\beta_1^{(k)}$, $\beta_2^{(k)}$ in equation \eqref{Eq:SOR_eigenbasis}: the first two rows of \eqref{Eq:SOR_eigenbasis} uniquely define $\beta_1^{(k)}$, $\beta_2^{(k)}$, and are then automatically satisfied. 
  		\item[(c)] Solve (analytically) for the third component/row of \eqref{Eq:SOR_eigenbasis}: the $3$ equations in six variables $(a_{11}, a_{21}, a_{22}, a_{31}, a_{32}, a_{33})$ for $q = 4$ can be solved by parameterizing $a_{21}$, $a_{31}$, and $a_{32}$ in terms of $a_{11}$, $a_{22}$, and $a_{33}$. We choose the parameterized branch to avoid the reducible $r$-confluent schemes (see \S\ref{sec:WSOTheory_Results}).
  		\item[(d)] Construct (numerically) the upper $3\times3$ block of $A$ with numerical entries. For $q = 4$, we select $a_{11}$, $a_{22}$, and $a_{33}$ randomly and use the parameterization in (c) to determine $a_{21}$, $a_{31}$, $a_{32}$. For $q = 5$, we find a numerical solution to the third component/row of \eqref{Eq:SOR_eigenbasis} for $\vec{\tau}^{(5)}$ (which via the parameterization is an equation in terms of $a_{11}$, $a_{22}$, $a_{33}$).
  		\item[(e)] Solve (numerically) row by row (from row $4$ through $s-1$), the $(q-1)$ equations in \eqref{Eq:SOR_eigenbasis}. When $q =4,5$, the $r$th row yields $(q-1)$ ($\leq r$) equations in $r$ variables. At each row, we numerically find a solution.  At the end of this substep, the upper $(s-1)\times (s-1)$ block of $A$ is populated with numerical entries. 
  		\item [(f)] To satisfy (C.3) we set $\vecpower{b}{T} = (a_{s1},a_{s2},\dots,a_{ss})$, then solve (numerically) for the last row of $A$. Together, this amounts to solving the $(q-1)$ equations from row $s$ in \eqref{Eq:SOR_eigenbasis} (i.e., to satisfy (C.2')) and the non-redundant (cf.~\S\ref{sec:notion_WSO}) order conditions (C.1),
  		 \begin{equation}\label{subset_order_cond_4}
            \vecpower{b}{T}[\vec{w}^{(1)},\vec{w}^{(2)},\vec{e},\vec{c},\vecpower{c}{2},\vecpower{c}{3},CA\vec{c}] = \left[0,0,1,\tfrac{1}{2},\tfrac{1}{3},\tfrac{1}{4},\tfrac{1}{8}\right], 
        \end{equation}
        for order $p = 4$. Order $p = 5$ requires, in addition to \eqref{subset_order_cond_4}:
        \begin{equation*}
            \vecpower{b}{T}[\vecpower{c}{4},C^2A\vec{c},CAC\vec{c},\text{diag}(A\vec{c})A\vec{c},CA^2\vec{c},ACA\vec{c}] =  \left[\tfrac{1}{5},\tfrac{1}{10},\tfrac{1}{15},\tfrac{1}{20},\tfrac{1}{30},\tfrac{1}{40}\right].
        \end{equation*}
        For instance, a $4$th order DIRK scheme with WSO $4$, yields $10$ equations ($7$ from \eqref{subset_order_cond_4} and $q-1 = 3$ from row $s$ of \eqref{Eq:SOR_eigenbasis}) in $s$ variables.
  \end{itemize}
  \smallskip
  
   This procedure generates a random scheme that satisfies the equality constraints. For robustness purposes, we choose to reject (and simply re-start the step) any scheme that has a coefficient larger than 20 in absolute value (in line with \cite{sharp1993explicit}) or fails to satisfy the constraints to within $10^{-10}$.

  \medskip
  \noindent\fbox{Step (1B).} Step~(1A) uses \verb|fmincon| as a solver with (for computational speed) the residual error tolerance set significantly larger than machine precision. This generates a first approximation  $(A,\vec{b})$ to constraints (C.1), (C.2'), and (C.3). To drive the residuals down to machine precision, we use the output from step~(1A) as a starting point and solve the equations defined by $\mathcal{C}_{\mathrm{Eq}}$ (again) via a Gauss-Newton iteration.
  
  \medskip
  \noindent\fbox{Step (1C).} In this step we reincorporate the inequality constraints: non-negative abscissae (C.4) and A-stability ((C.5) and (CR.6)) to construct (fully) feasible schemes.  Using the output $(A,\vec{b})$ from step~(1B), we call \verb|fmincon| with the full constraint set and (again) a constant objective function. The resulting schemes turn out to satisfy the equality constraints to machine precision, and we observe that they tend to not lie on the boundary of the inequality constraints $\mathcal{C}_{\mathrm{InEq}}$. In the occurrence that the optimization solve in this step fails, we simply restart from step (1A).

\begin{remark} (SDIRKs and WSO) Restricting to the solution branch defined by \eqref{Eq:FirstTwoEV_1} in (1A) rules out singly diagonally implicit Runge-Kutta schemes (SDIRKs), which have all their diagonal entries identical. SDIRK schemes are of practical interest for their ease of implementation. While we defer the study of SDIRK schemes to later work, a preliminary exploration of the other solution branch \eqref{Eq:FirstTwoEV_2} (albeit with $\dim(K_q) > 2$) revealed that SDIRKs with high WSO do exist, demonstrating that the SDIRK structure is compatible with high WSO.
\end{remark}

\emph{2.~Optimization:} 
We will refer to schemes by the triple $(s,p,q)$, representing the number of stages, classical order, and weak stage order,
respectively.
According to theorem \ref{Thm:BoundonDimK}, a $4$th order DIRK scheme with WSO $4$ requires at least five stages, and a $5$th DIRK scheme with WSO $4$ or $5$ requires at least six stages.  These are lower bounds that may not be sharp, and the additional requirements we have imposed (such as A-stability and stiff accuracy) are likely to further increase the minimum viable number of stages.
In numerical searches, we have found schemes of type $(7,4,4)$, $(12,5,4)$, $(12,5,5)$.  Numerical searches failed
to find methods with the corresponding $p,q$ and fewer stages $s$.

We repeatedly (100,000+ times) solve $\textcolor{black}{(M)}$ via local optimization, starting with initial guesses given by the output of step~$(1\mathrm{C})$. This yields a set of locally optimal schemes.  We use MATLAB's \verb|fmincon| with the gradient-based \verb|sqp| algorithm.
Among the locally optimal schemes found in this manner, we have selected one from each class that is close to optimal in terms of $F(A, \vec{b})$ and is Pareto-optimal in terms of minimizing $F(A, \vec{b})$ and minimizing $\max_{i,j} |a_{ij}|$.
We thus provide three schemes, one for each triple $(s,p,q)$: DIRK-$(7,4,4)$, DIRK-$(12,5,4)$, and DIRK-$(12,5,5)$. Since we used the relaxation (CR.6) in place of (C.6), we check \emph{a posteriori} that the schemes are in fact A-stable. The stability regions and magnitude of the stability function along the imaginary axis, shown in Figure~\ref{fig:stability_region_1}, confirm this.
Scheme coefficients are given in \cref{app:butcherTableau}.

\begin{figure}[htb]
	\begin{minipage}[b]{.49\textwidth}
	    \centering
		\includegraphics[width=.86\textwidth]{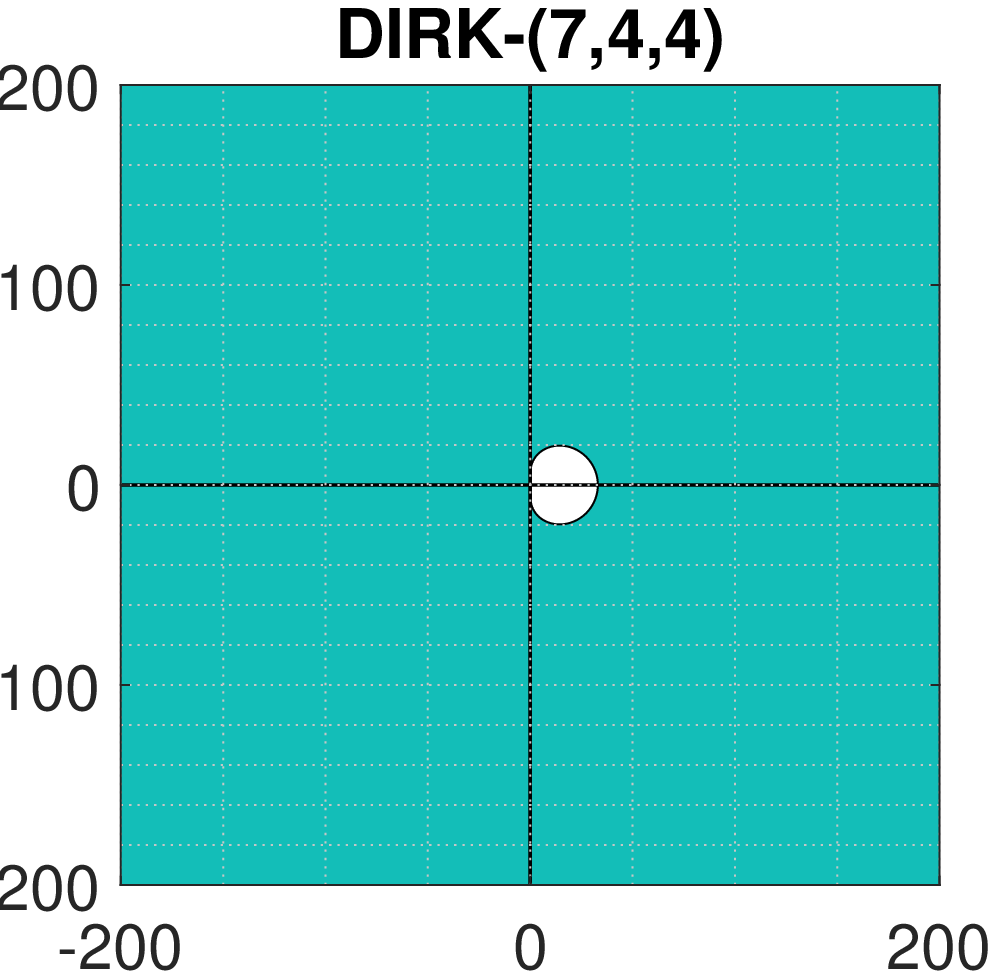}
	\end{minipage}
	\hfill
	\begin{minipage}[b]{.49\textwidth}
	    \centering
		\includegraphics[width=.86\textwidth]{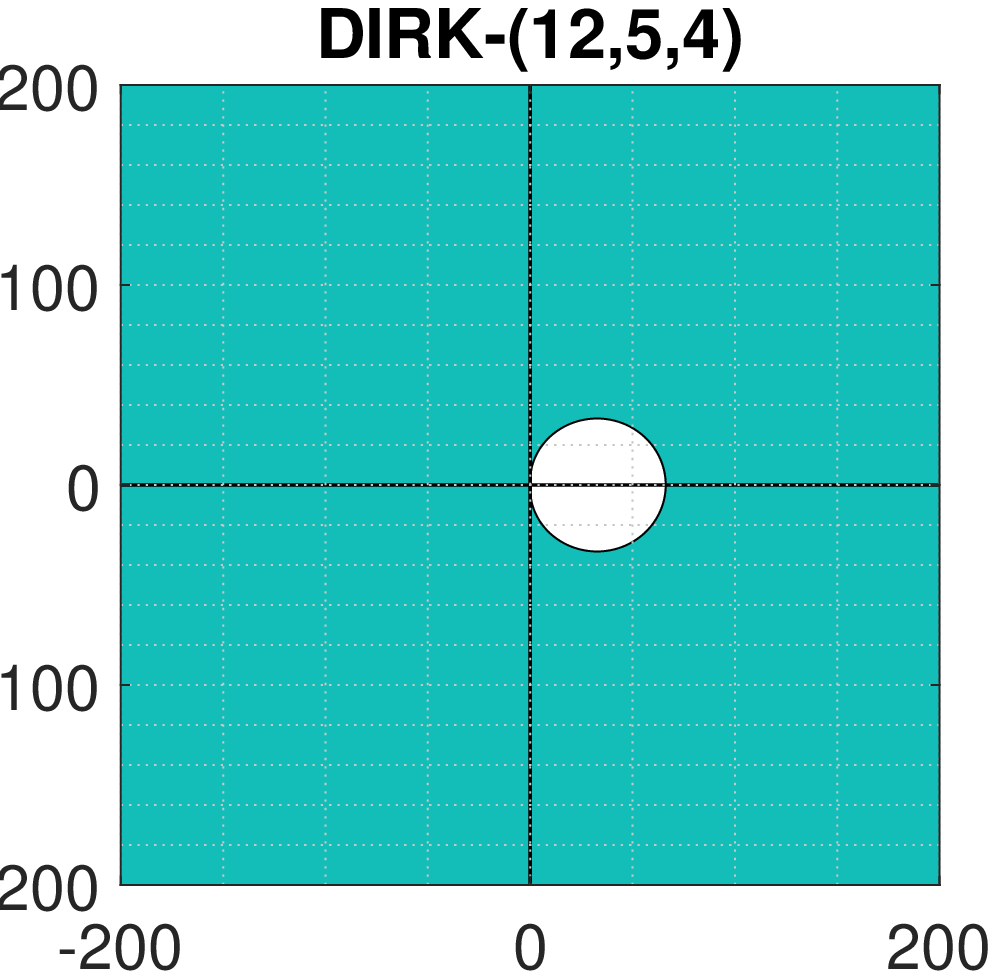}
	\end{minipage}
	
	\begin{minipage}[b]{.49\textwidth}
	    \centering
	    \includegraphics[width=.86\textwidth]{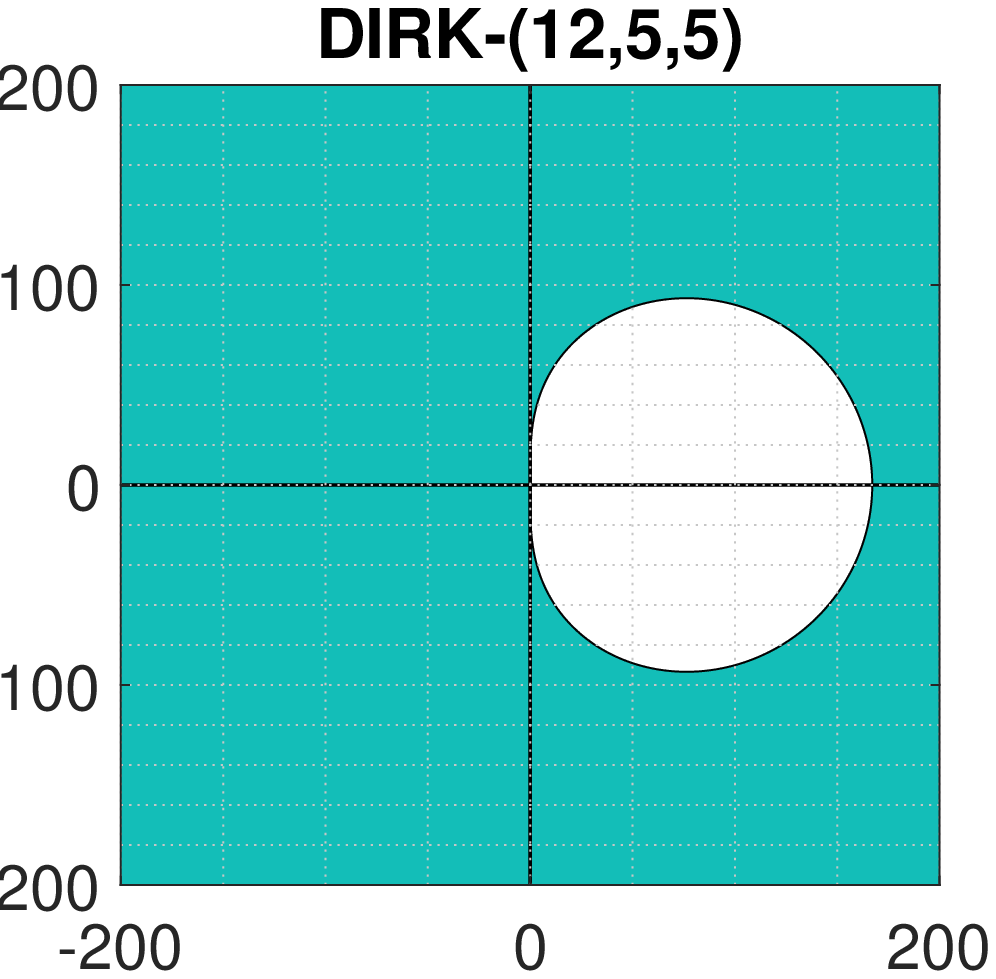}
    \end{minipage}
    \hfill
    \begin{minipage}[b]{.49\textwidth}
	    \centering
    	\includegraphics[width=\textwidth]{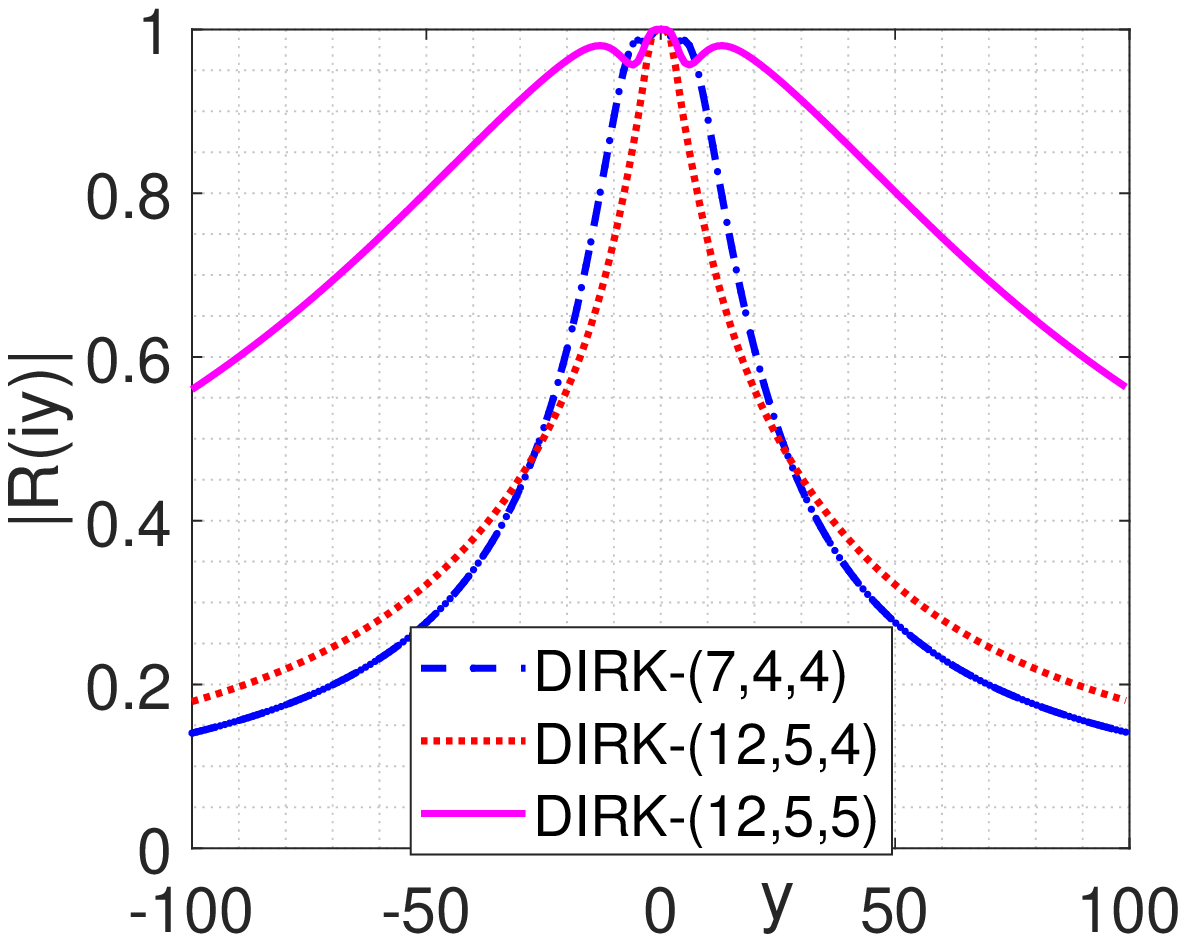}
    \end{minipage}
	\vspace{-.5em}
	\caption{Stability regions (in $\mathbb{C}$) for the new schemes, DIRK-$(7,4,4)$, DIRK-$(12,5,4)$, and DIRK-$(12,5,5)$, and $|R(iy)|$ for $y \in \mathbb{R}$ for the three schemes. Observe that $|R(iy)| \leq 1$ along the imaginary axis, hence these methods are A-stable.}
	\label{fig:stability_region_1}
\end{figure}

\section{Numerical Results: Linear Problems with Autonomous Operators}
\label{sec:numerical_result_linear_problems}
This section presents numerical test cases for ODE and PDE problems with linear operators with time-independent coefficients (the forcing and solutions may be time-dependent). This is the class of problems for which WSO is expected to alleviate order reduction. 

There are only a handful of theoretical results characterizing for which problems or PDEs the convergence rate can be guaranteed to be equal to the weak stage order.  For example, Ostermann and Roche \cite{ostermann1992runge} examined linear boundary value problems, i.e., $u_t = \mathcal{L} u + f$ with boundary condition $\mathcal{B}u = 0$, where $\mathcal{L}$ has a complete $L^2$ eigenfunction basis with (point spectrum) eigenvalues satisfying $\textrm{Re}(\lambda) \leq 0$ (see Assumptions~(3.1) in \cite{ostermann1992runge}). Here $\mathcal{L}$ may have coefficients that depend on space $x$, but not on time $t$; and $f(t)$ may be time-dependent. Then RK schemes satisfying the condition
\begin{equation}\label{eq:cond_Wk}
	W_k(z) \equiv 0 \quad \textrm{for } 1 \leq k \leq q\;, \quad \quad \textrm{where} \quad W_k(z) := \frac{k \vecpower{b}{T} (I - z A)^{-1} \vec{\tau}^{(k)} }{R(z) - 1}\;,
\end{equation}
along with Assumptions~(2.9) in \cite{ostermann1992runge}, overcome order reduction. Condition \eqref{eq:cond_Wk} is (essentially) implied by WSO $q$. In a similar spirit, condition \eqref{eq:cond_Wk} remedies order reduction for Rosenbrock methods \cite{OstermannRoche1993} in a more abstract setting where $\mathcal{L}$ is the infinitesimal generator of an analytic semi-group. While several of the PDEs we test here fall under the framework of known convergence results in \cite{ostermann1992runge}, some do not, such as the linear advection equation in \S\ref{subsec:linadveq}.

Below, the new schemes are denoted by $(s,p,q)$, where $s$ is the number of stages, $p$ is the classical order, and $q$ is the scheme's weak stage order. Methods with $q = p$ for ODEs and $q = p-1$ for PDEs yield solutions that converge at the rate $p$.  However, for PDEs, spatial derivatives of the solution may still exhibit order reduction if $q = p-1$. Methods with $q = p$ for PDE problems also alleviate order reduction in the solution's derivatives \cite{ostermann1992runge, rosales2017spatial}.

As references of comparison for our newly devised (high WSO) schemes, we include two schemes with WSO $q=1$, referred to as DIRK-($5,4,1$) \cite[Chapter~IV.6, Table~6.5]{wanner1996solving} which is A-stable and stiffly accurate; and DIRK-($5,5,1$) \cite[Table~24, p.~98]{kennedy2016diagonally}, which is A-stable but not stiffly accurate.

In each PDE test problem below, a spatial approximation is chosen so that the spatial approximation error becomes negligible relative to the temporal error. Hence, the error convergence plots below isolate the temporal error generated by the different DIRK schemes with high (and low) WSO. Note that for the different test problems, different spatial approximation strategies and numbers of grid points are employed to achieve this objective while also balancing simplicity and computational efficiency.

\subsection{Prothero-Robinson ODE test problem}
We study the problem \eqref{pro_robin_stiff_problem} with true solution $\phi(t)=e^{-t}\sin(10t)+\cos(20t)$, stiffness parameter $\lambda=-10^4$, initial condition $u(0)=\phi(0)$ and final time $T=10$. Figure~\ref{linear_ode_test_problem} contrasts high versus low WSO schemes, i.e., DIRK-(7,4,4) vs.~DIRK-(5,4,1), as well as DIRK-(12,5,5) vs.~DIRK-(5,5,1). For each scheme we observe convergence order $p$ for small enough $\Delta t$. However, in line with the theoretical predictions, for the schemes with $q<p$ the convergence rate is lower (approximately equal to $q$) for larger values of $\Delta t$, i.e., in the stiff regime.

\begin{figure}[htb]
	\begin{minipage}[b]{.47\textwidth}
	    \centering
		\includegraphics[width=.85\textwidth]{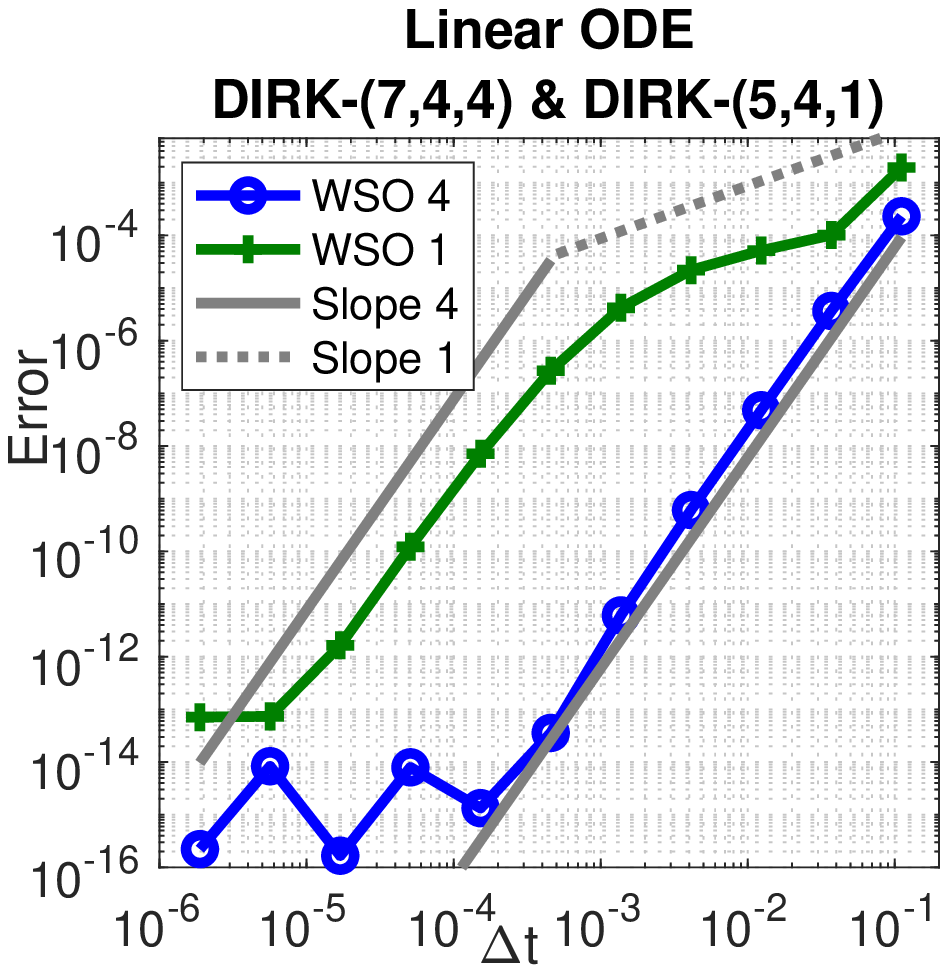}
	\end{minipage}
	\hfill
	\begin{minipage}[b]{.47\textwidth}
	    \centering
		\includegraphics[width=.85\textwidth]{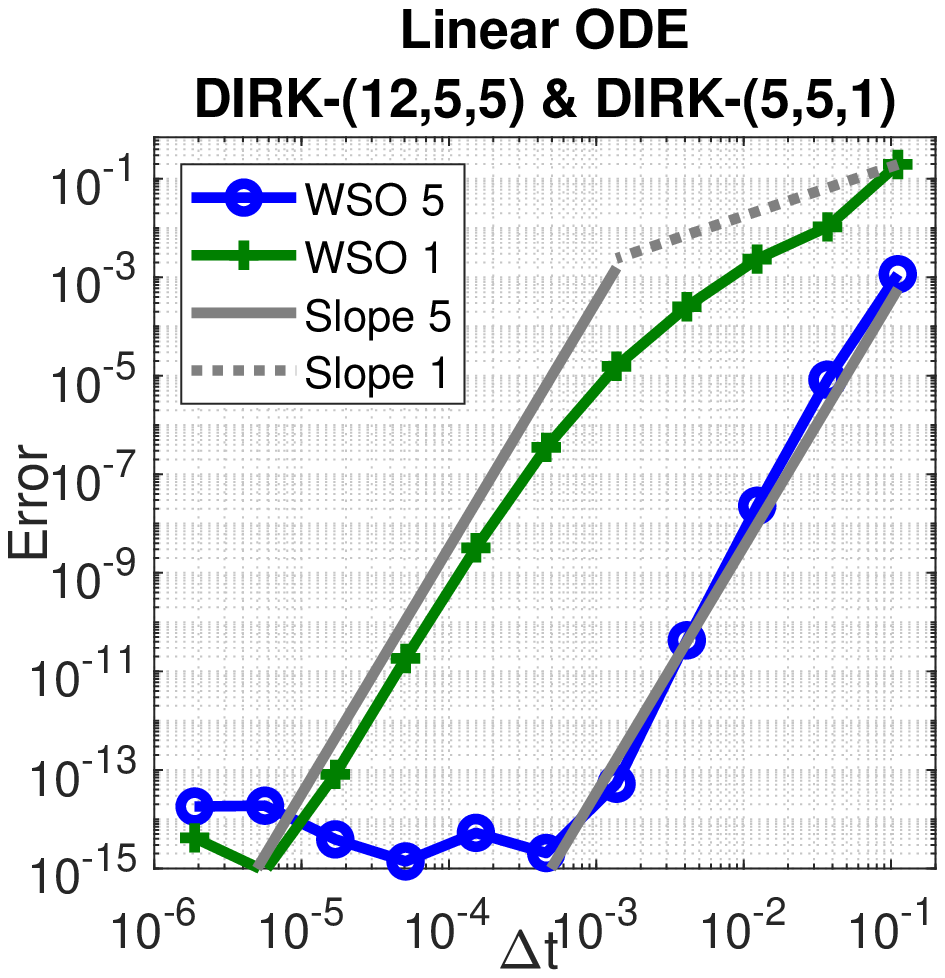}
	\end{minipage}
	\vspace{-.5em}
	\caption{Convergence for the Prothero-Robinson test problem using DIRK-$(7,4,4)$: $4$th order DIRK scheme with WSO $4$ (blue circles) and WSO $1$ (green)  (left), and DIRK-$(12,5,5)$: $5$th order DIRK scheme with WSO $5$ (blue circles) and WSO $1$ (green) (right).}
	\label{linear_ode_test_problem}
\end{figure}

\subsection{Heat equation}
Next we consider the $1$D heat equation
\begin{equation*}
u_t = u_{xx}+f \ \ \text{for} \ (x,t)\in (0,1) \times (0,1], \quad u=g(x,t) \ \ \text{on} \ \{0,1\} \times (0,1]\;,
\end{equation*}
with the forcing $f(x,t)$, the boundary conditions (b.c.) and the initial condition (i.c.) chosen such that $u(x,t) = \cos(20t)\sin(10x+10)$. To isolate the temporal error, we use a $4\text{th}$ order centered finite difference approximation in space on a grid with $10^4$ points. Errors are computed at the final time $T=1$ using the maximum norm in space. Figure~\ref{HeatEqn} shows the convergence of function values $u$ and derivatives $u_x$ using the three new high WSO DIRK schemes, compared with reference WSO-1 DIRK schemes of the respective orders. In agreement with the analysis in \cite{rosales2017spatial}, for this second-order PDE, the time stepping schemes produce spatial boundary layers (BLs) of width $\mathcal{O}(\Delta t^{0.5})$, resulting in a loss of half an order in $u_x$ when $q<p$. The results confirm the full order of convergence in $u$ and $u_x$ when using DIRK-$(7,4,4)$ and DIRK-$(12,5,5)$, and the full order in $u$ and half order loss in $u_x$ with DIRK-$(12,5,4)$. Note that with the given setup, the spatial approximation error is about $10^{-11}$, hence the stagnation of the errors around that value.

\begin{figure}[htb]
	\begin{minipage}[b]{.32\textwidth}
		\includegraphics[width=\textwidth]{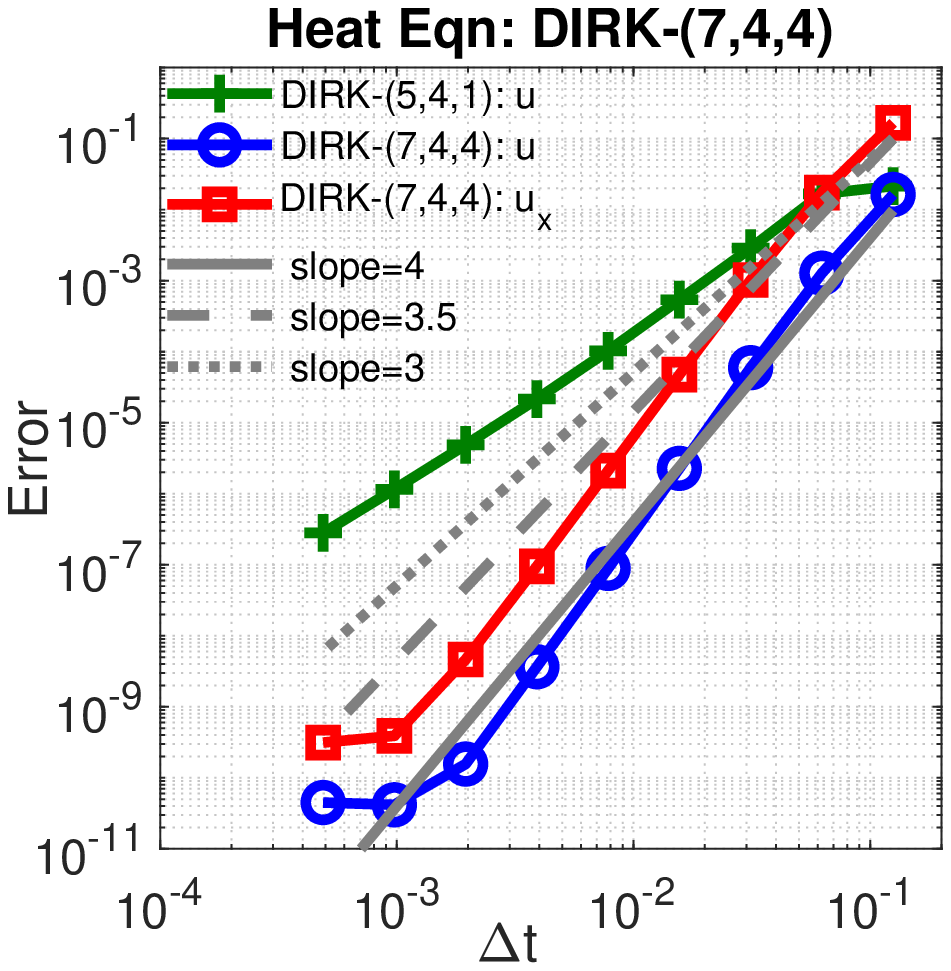}
	\end{minipage}
	\begin{minipage}[b]{.32\textwidth}
		\includegraphics[width=\textwidth]{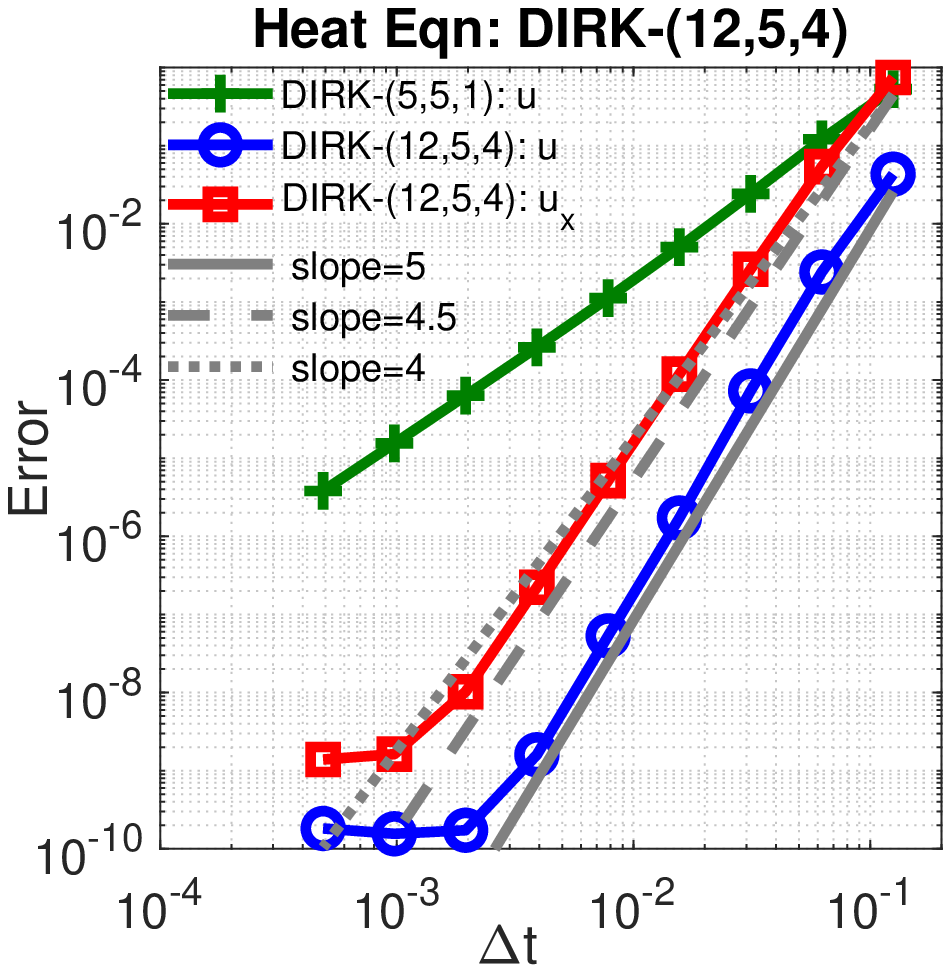}
	\end{minipage}
	\begin{minipage}[b]{.32\textwidth}
		\includegraphics[width=\textwidth]{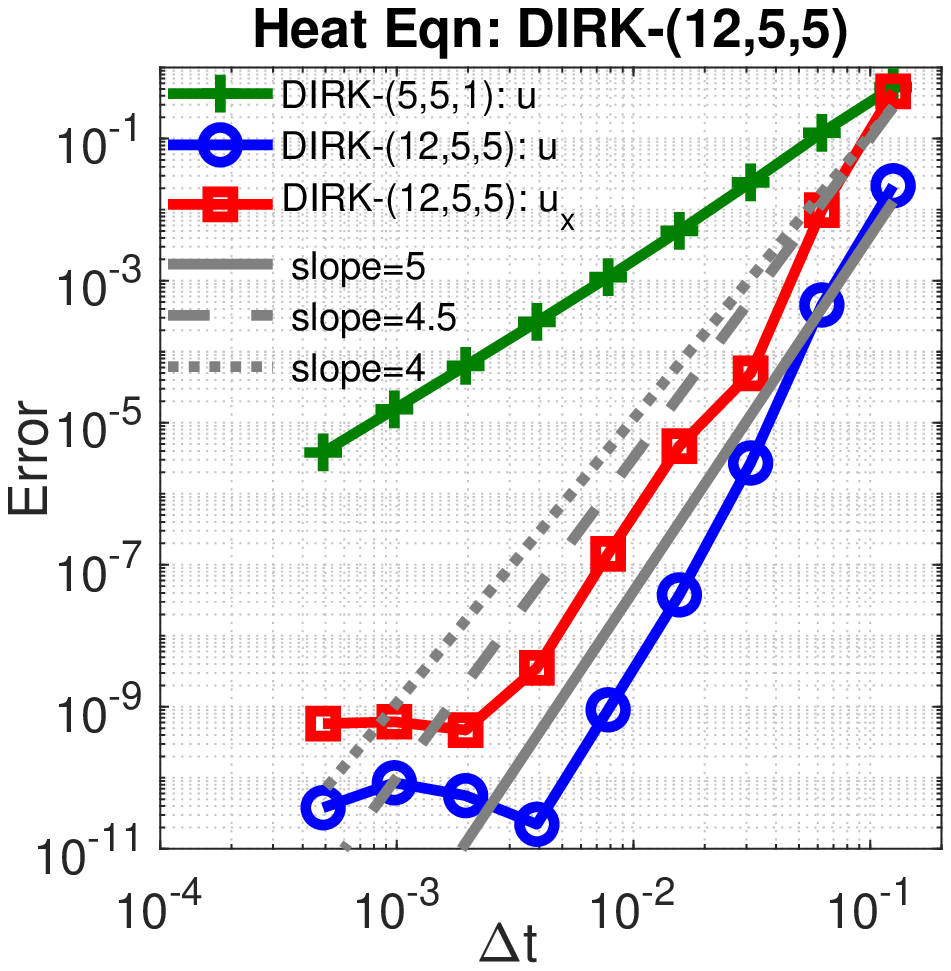}
	\end{minipage}
	\vspace{-.5em}
	\caption{Convergence ($u$ blue circles; $u_x$ red squares) for heat equation using DIRK-$(7,4,4)$: $4$th order DIRK scheme with WSO $4$ (left), DIRK-$(12,5,4)$: $5$th order DIRK scheme with WSO $4$ (middle), and DIRK-$(12,5,5)$: $5$th order DIRK scheme with WSO $5$ (right).}
	\label{HeatEqn}
\end{figure}

\subsection{Schr\"{o}dinger equation}
As an example of a dispersive problem we consider
\begin{equation*}
u_t = \frac{i \omega}{k^2} u_{xx}+f \ \ \text{for} \ (x,t)\in (0,1) \times (0,1.2], \quad u=g(x,t) \ \ \text{on} \ \{0,1\} \times (0,1.2]\;,
\end{equation*}
with the manufactured solution $u(x,t) = \exp\left(-(x-t)^2\right) \cos(10x) \sin(t)$, where $\omega = 2\pi$ and $k = 20$. As above, $u_{xx}$ is approximated by $4$th order centered differences on a fine grid with $10^4$ cells. The problem is solved up to final time $T=1.2$ via different RK schemes, and the convergence in $u$ and $u_x$ is evaluated.
Figure~\ref{fig:SchroedingeEquation} shows the results obtained with DIRK-$(7,4,4)$ (left), DIRK-$(12,5,4)$ (middle), and DIRK-$(12,5,5)$ (right), relative to DIRK-$(5,4,1)$ and DIRK-$(5,5,1)$ reference methods. Similar convergence results are observed as for the heat equation: full orders in $u$ and $u_x$ are recovered with DIRK-$(7,4,4)$ and DIRK-$(12,5,5)$, while $u_x$ loses a half order with DIRK-$(12,5,4)$.

\begin{figure}[htb]
	\begin{minipage}[b]{.32\textwidth}
		\includegraphics[width=\textwidth]{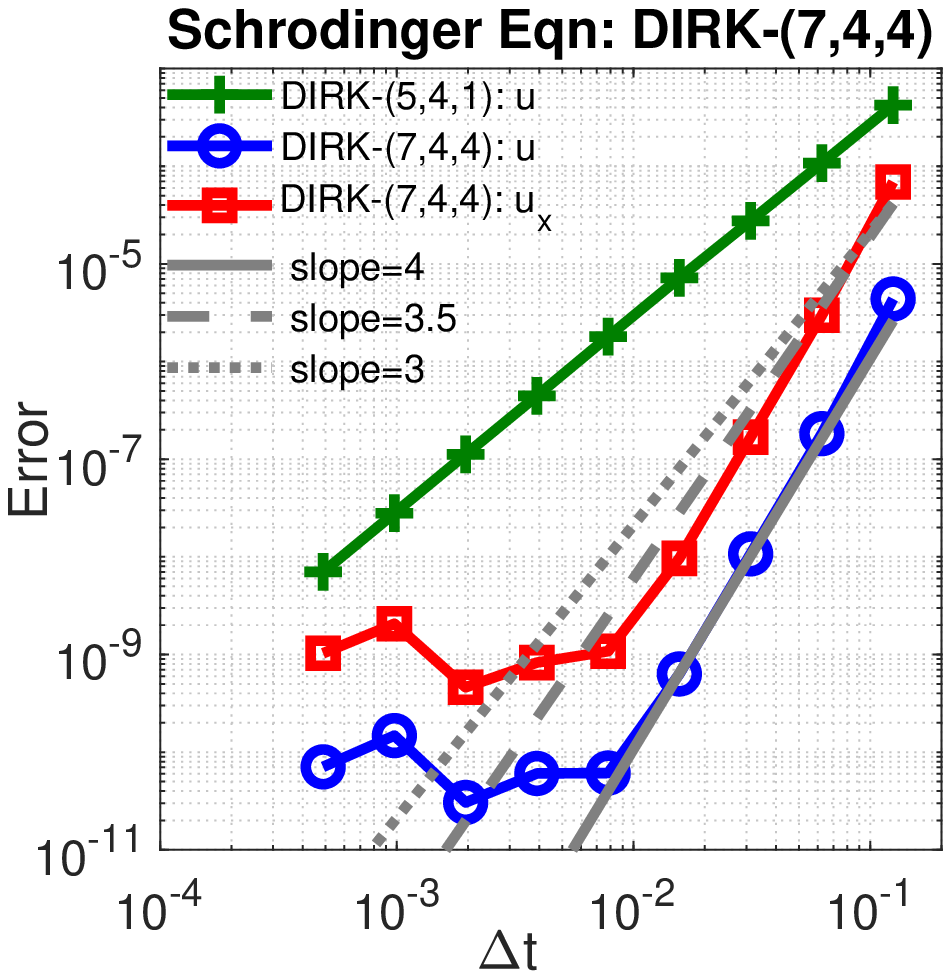}
	\end{minipage}
	\hfill
	\begin{minipage}[b]{.32\textwidth}
		\includegraphics[width=\textwidth]{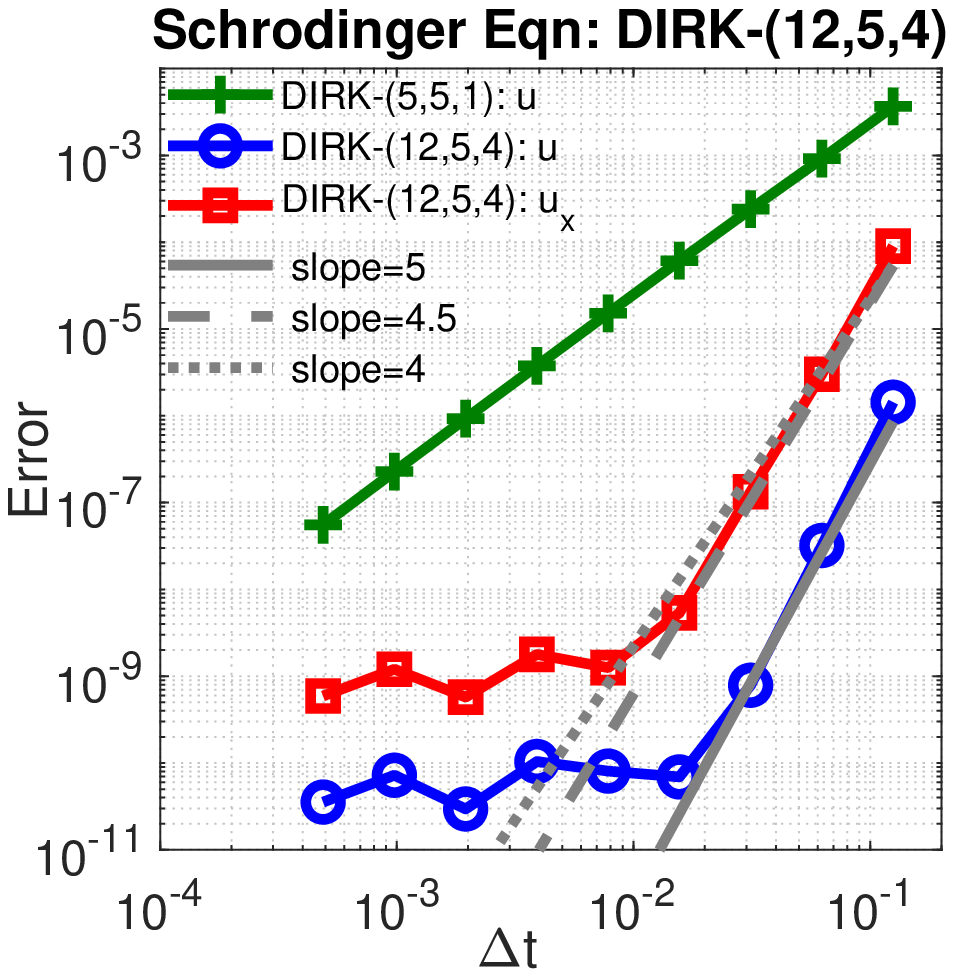}
	\end{minipage}
	\hfill
	\begin{minipage}[b]{.32\textwidth}
		\includegraphics[width=\textwidth]{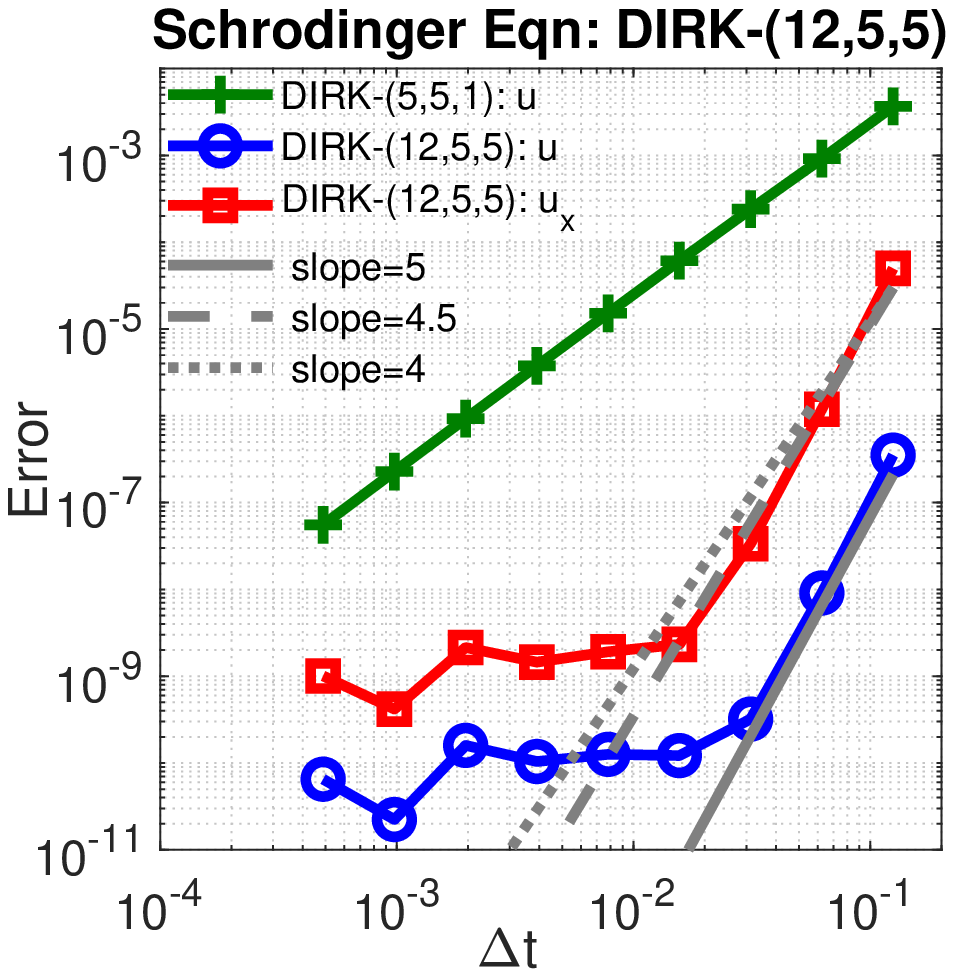}   
	\end{minipage}
	\vspace{-.5em}
	\caption{Convergence ($u$ blue circles; $u_x$ red squares) for Schr\"{o}dinger equation using DIRK-$(7,4,4)$: $4$th order DIRK scheme with WSO $4$ (left), DIRK-$(12,5,4)$: $5$th order DIRK scheme with WSO $4$ (middle), and DIRK-$(12,5,5)$: $5$th order DIRK scheme with WSO $5$ (right).}
	\label{fig:SchroedingeEquation}
\end{figure}

\subsection{Advection-diffusion equation}
This example demonstrates that DIRK schemes with high weak stage order avoid order reduction when applied to problems with physical boundary layers. We consider the $1$D linear advection-diffusion equation
\begin{equation*}
u_t+u_{x} = \nu u_{xx}+f(x,t) \ \ \text{for} \ (x,t)\in (0,1) \times (0,1] \;,
\end{equation*}
with the true solution $u(x,t) = \cos(5t)\sin(10x+10)$, Dirichlet b.c., and viscosity $\nu = 10^{-3}$. The advection term dominates, and the outflow boundary condition at $x=1$ leads to a physical boundary layer of width $\mathcal{O}(\nu)$. All spatial derivatives are approximated via $4\text{th}$-order centered differences on a grid with $10^4$ cells, and errors are evaluated at $T=1$. The results shown in Figure~\ref{fig:LinAdvDiffEqn} exhibit the expected convergence in $u$ and $u_x$ for DIRK-$(7,4,4)$ (left), DIRK-$(12,5,4)$ (middle), and DIRK-$(12,5,5)$ (right). In particular, the results confirm that physical boundary layers do not interfere with the schemes' remedy of order reduction.

\begin{figure}[htb]
	\begin{minipage}[b]{.32\textwidth}
		\includegraphics[width=\textwidth]{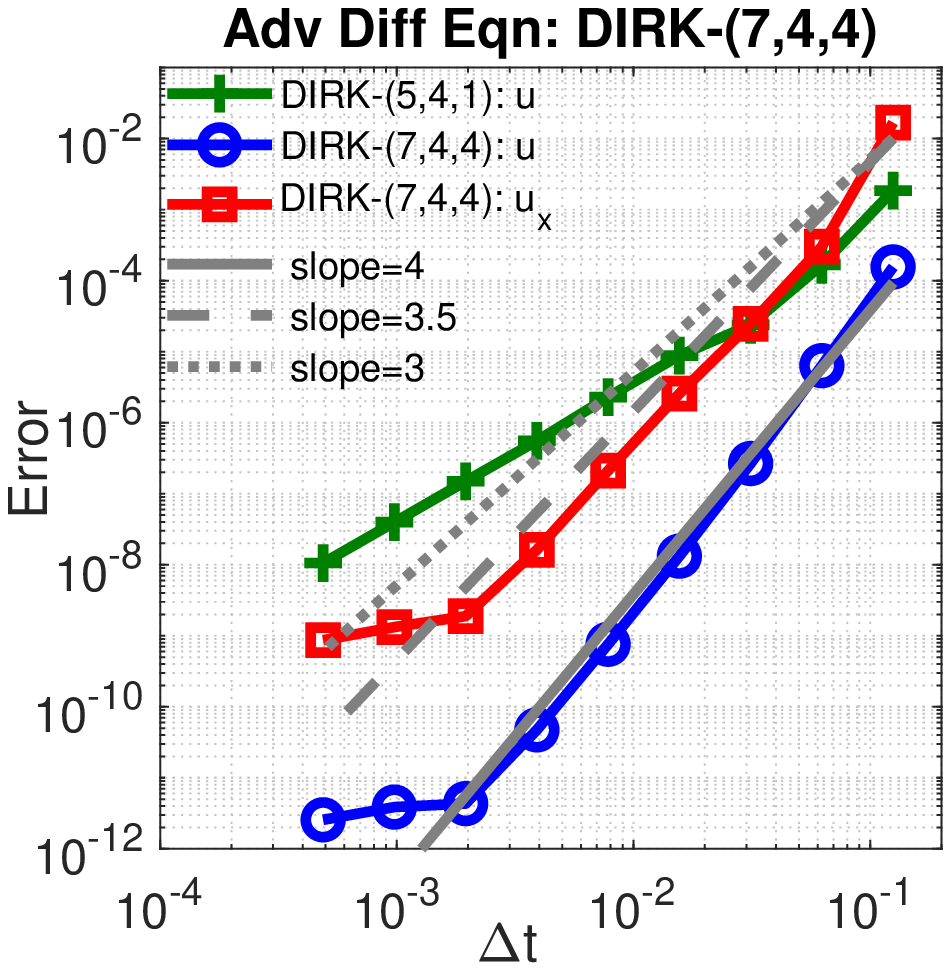}
	\end{minipage}
	\hfill
	\begin{minipage}[b]{.32\textwidth}
		\includegraphics[width=\textwidth]{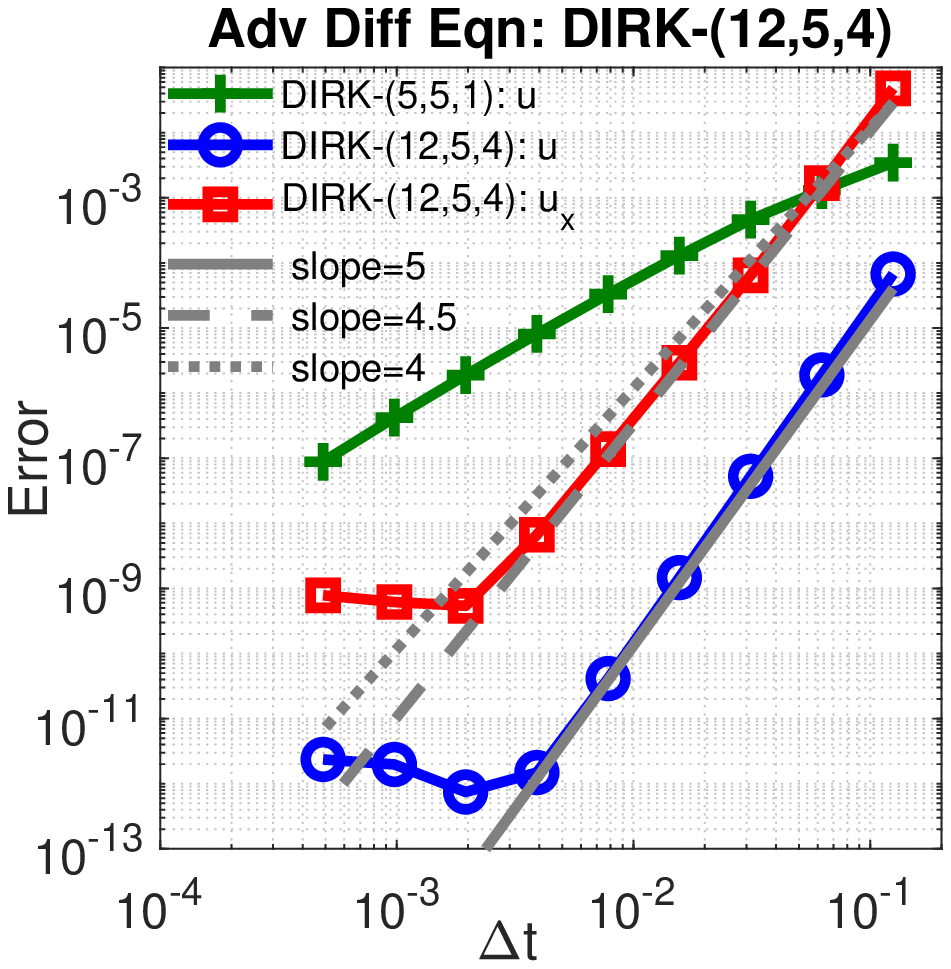}
	\end{minipage}
	\hfill
	\begin{minipage}[b]{.32\textwidth}
		\includegraphics[width=\textwidth]{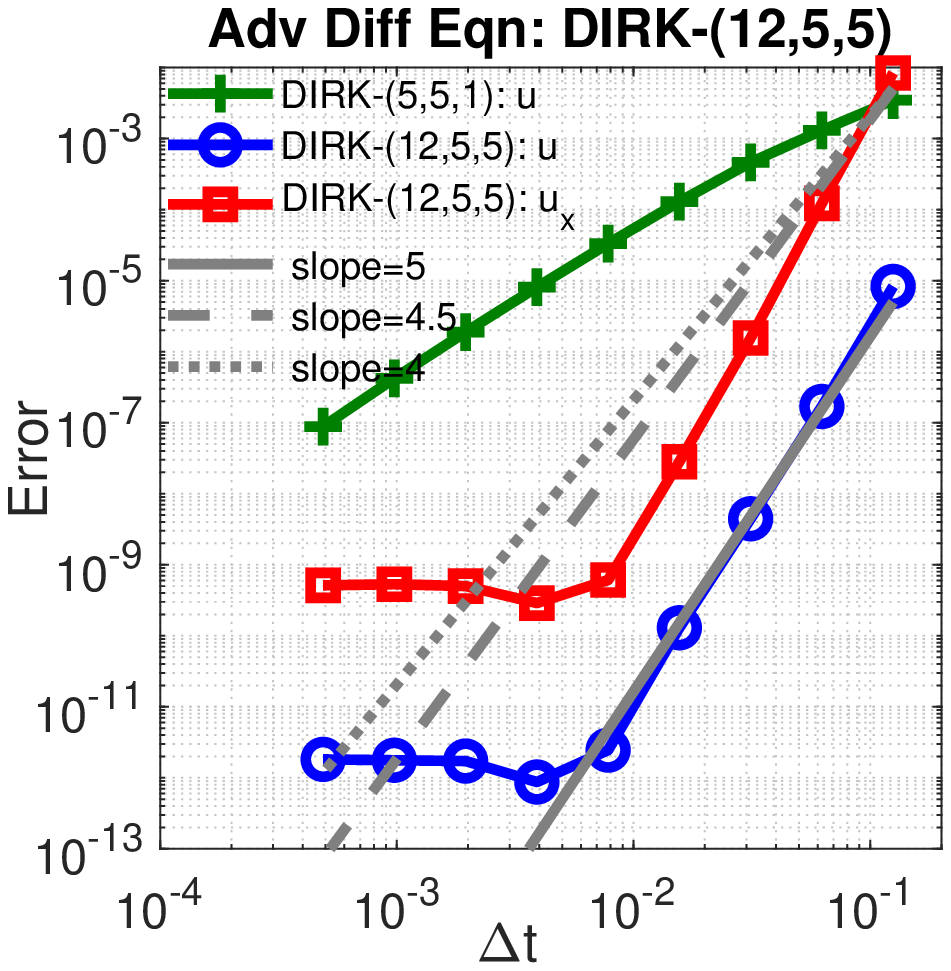}
	\end{minipage}
	\vspace{-.5em}
	\caption{Convergence ($u$ blue circles; $u_x$ red squares) for the advection-diffusion equation using DIRK-$(7,4,4)$: $4$th order DIRK scheme with WSO $4$ (left), DIRK-$(12,5,4)$: $5$th order DIRK scheme with WSO $4$ (middle), and DIRK-$(12,5,5)$: $5$th order DIRK scheme with WSO $5$ (right).}
	\label{fig:LinAdvDiffEqn}
\end{figure}

\subsection{Linear advection equation}\label{subsec:linadveq}
To illustrate order reduction, and its remedy, in problems with only first-order spatial derivatives, we consider
\begin{equation*}
u_t+u_{x} = 0 \ \ \text{for} \ (x,t)\in (0,1) \times (0,1]\;,
\end{equation*}
with Dirichlet b.c.~at $x = 0$, final time $T=1$, and the true solution a traveling wave $u(x,t) = \sin(2\pi(x-t))$. Again, $4$th-order centered differences with $10^4$ cells are used to approximate $\partial_x$.
For this first-order problem, the numerical boundary layer due to order reduction now is of thickness $\mathcal{O}(\Delta t)$, hence we expect a loss of a full order in $u_x$ when $q<p$. This is demonstrated in Figure~\ref{fig:LinAdvEqn}: DIRK-$(12,5,4)$ recovers $5$th order in $u$ and $4$th order in $u_x$. Moreover, DIRK-$(7,4,4)$ and DIRK-$(12,5,5)$ recover their full orders of convergence for both $u$ and $u_{x}$. It is interesting to note that for this specific test problem, the reference schemes with $q=1$ turn out to exhibit third-order convergence, instead of the expected second order. We do not have an explanation for this interesting behavior; however, note that this is not in contradiction to any of the theory.

\begin{figure}[htb]
	\begin{minipage}[b]{.32\textwidth}
		\includegraphics[width=\textwidth]{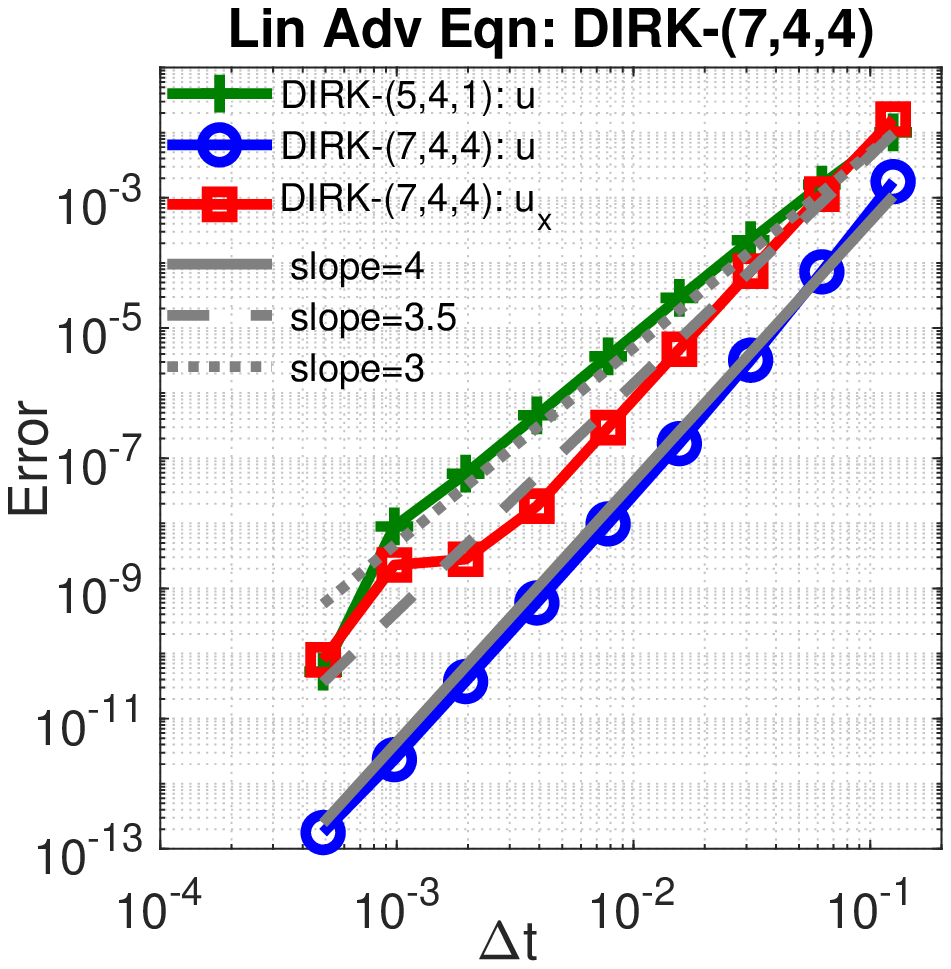}
	\end{minipage}
	\hfill
	\begin{minipage}[b]{.32\textwidth}
		\includegraphics[width=\textwidth]{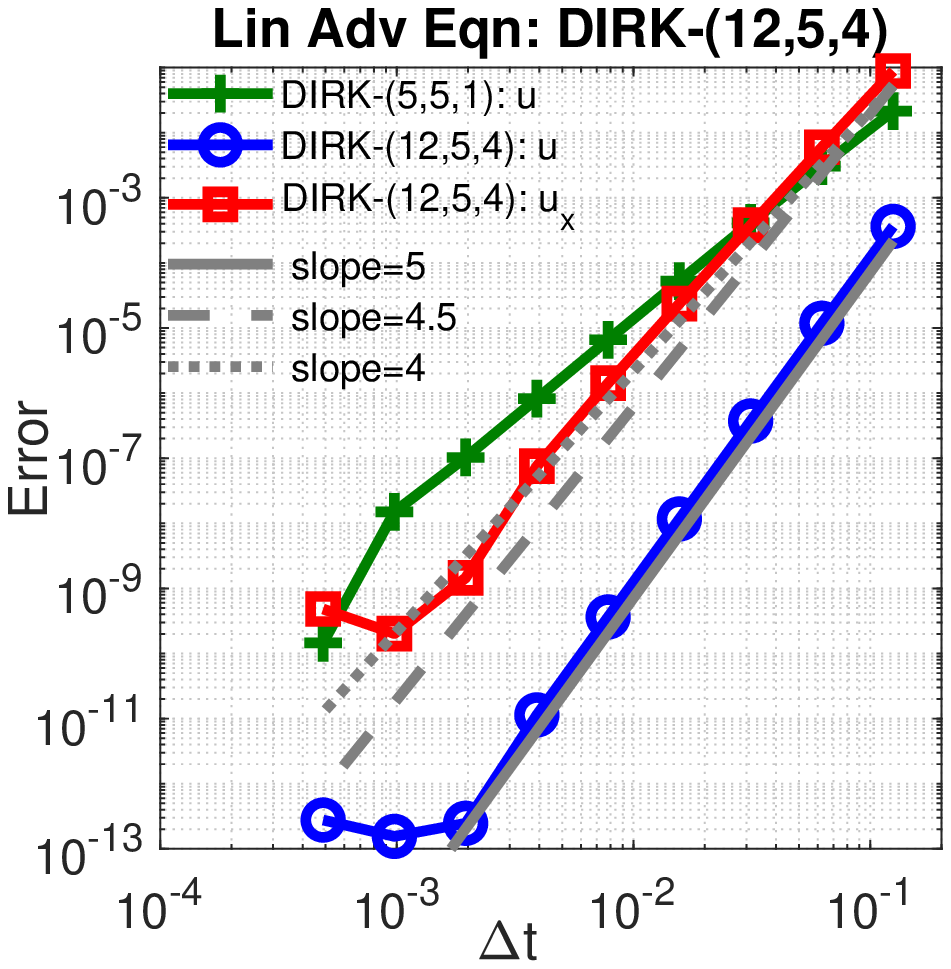}
	\end{minipage}
	\hfill
	\begin{minipage}[b]{.32\textwidth}
		\includegraphics[width=\textwidth]{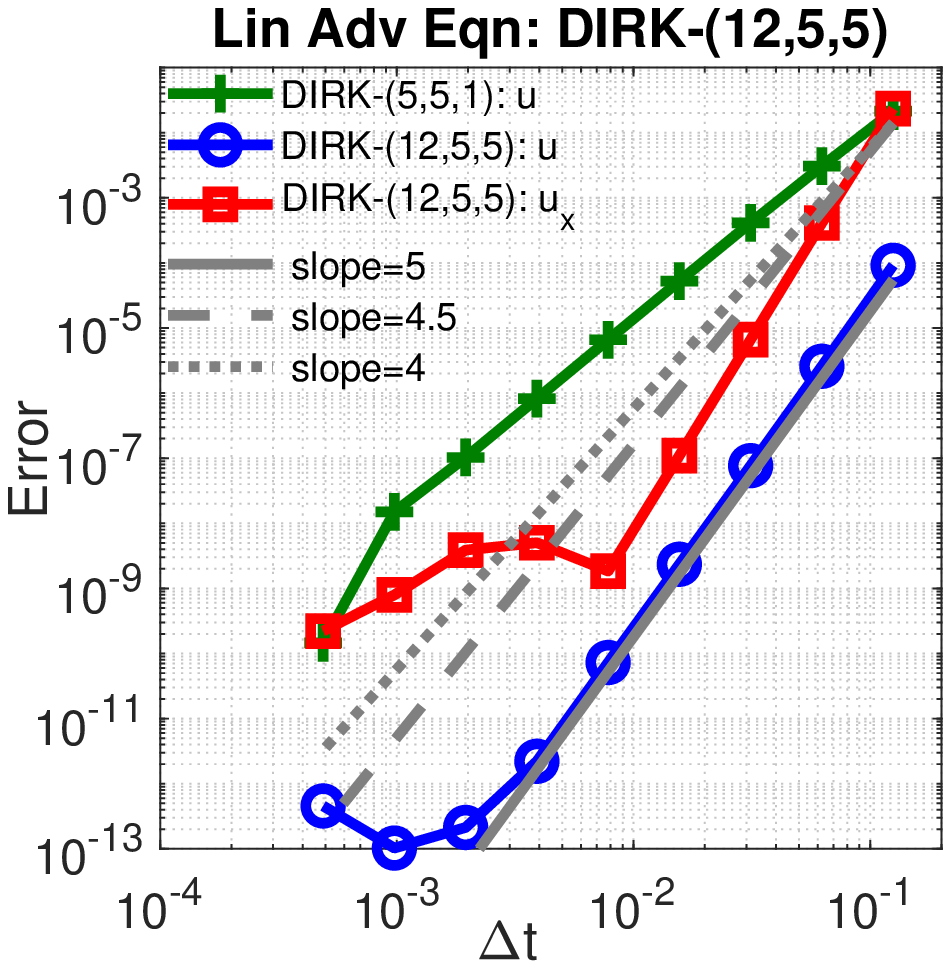}
	\end{minipage}
	\vspace{-.5em}
	\caption{Convergence ($u$ blue circles; $u_x$ red squares) for the linear advection equation using DIRK-$(7,4,4)$: $4$th order DIRK scheme with WSO $4$ (left), DIRK-$(12,5,4)$: $5$th order DIRK scheme with WSO $4$ (middle), and DIRK-$(12,5,5)$: $5$th order DIRK scheme with WSO $5$ (right).}
	\label{fig:LinAdvEqn}
\end{figure}

\subsection{Heat equation with spatially varying coefficient}
\label{subsec:results_heateqn_varying_x}
The examples above are restricted to differential operators with constant coefficients. To demonstrate that our schemes remedy order reduction for more general problems, we consider the heat equation
\begin{equation*}
	u_t=\left(\kappa(x)u_{x}\right)_{x}+f \ \ \text{for} \ (x,t)\in (0,1) \times (0,1], \quad u=g \ \ \text{on} \ \{0,1\} \times (0,1]\;,
\end{equation*}
with spatially varying diffusion coefficient $\kappa(x) = \cos(x+0.1)$. The forcing $f(x,t)$, the b.c., and the i.c.\ are chosen so that the true solution is $u(x,t) = \cos(20t)\sin(10x+10)$. We used $6$th-order centered differences with $10^3$ cells to approximate the spatial derivatives and the problem is solved till $T=1$.
Figure~\ref{fig:SpaceDepnVarCoeffHeatEqn} confirms that the high WSO schemes recover the expected convergence orders, just as they did for the constant-coefficient heat equation.

\begin{figure}[htb]
	\begin{minipage}[b]{.32\textwidth}
		\includegraphics[width=\textwidth]{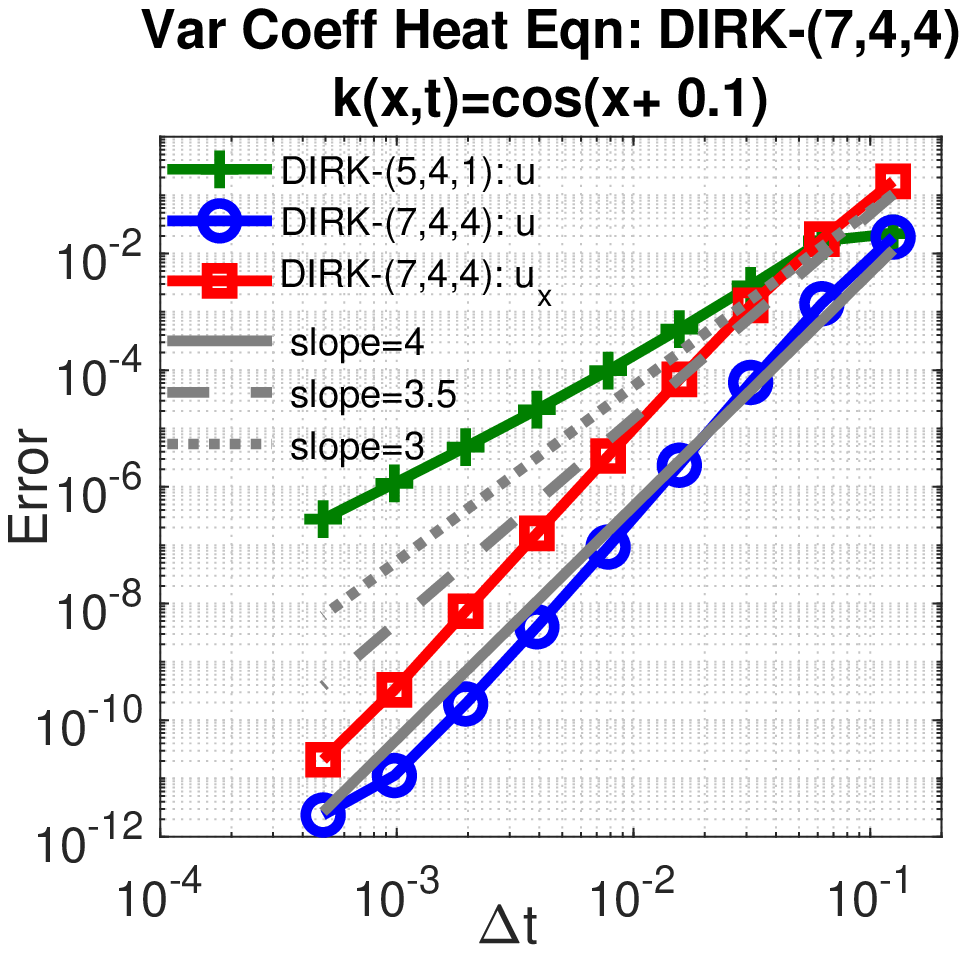}
	\end{minipage}
	\begin{minipage}[b]{.32\textwidth}
		\includegraphics[width=\textwidth]{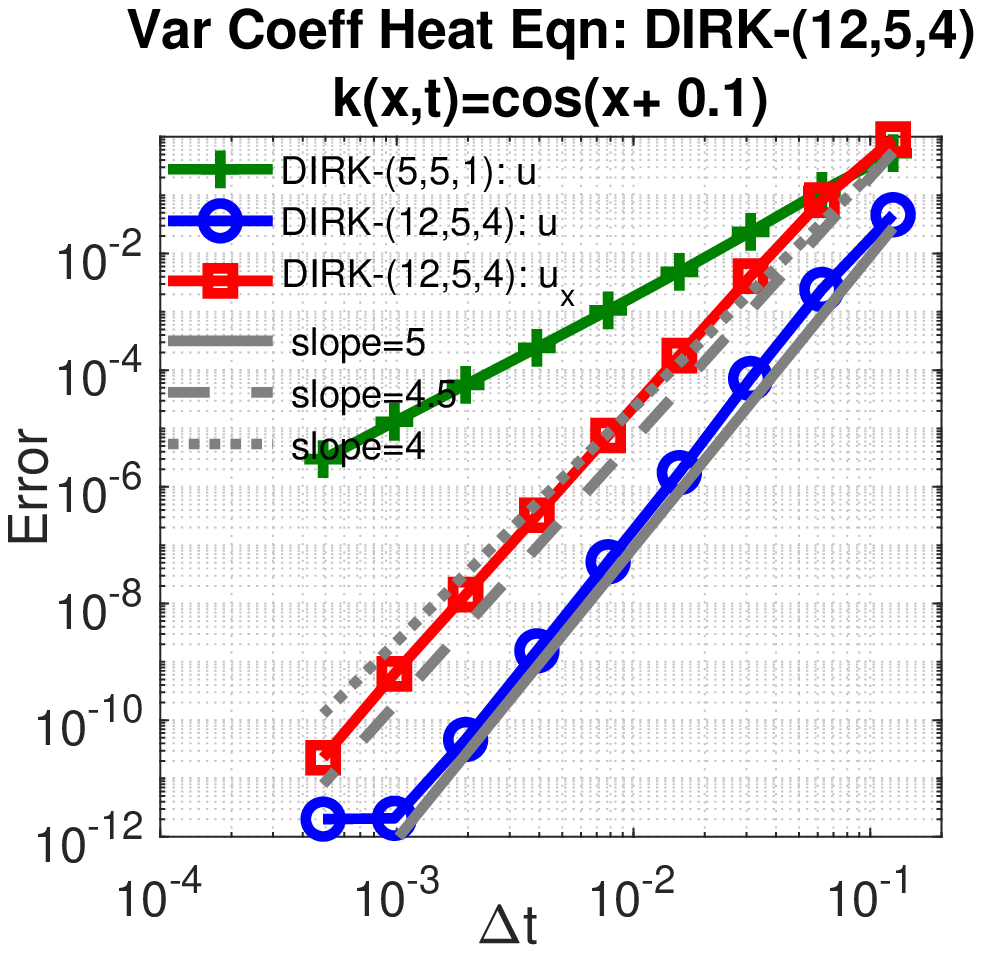}
	\end{minipage}
	\begin{minipage}[b]{.32\textwidth}
		\includegraphics[width=\textwidth]{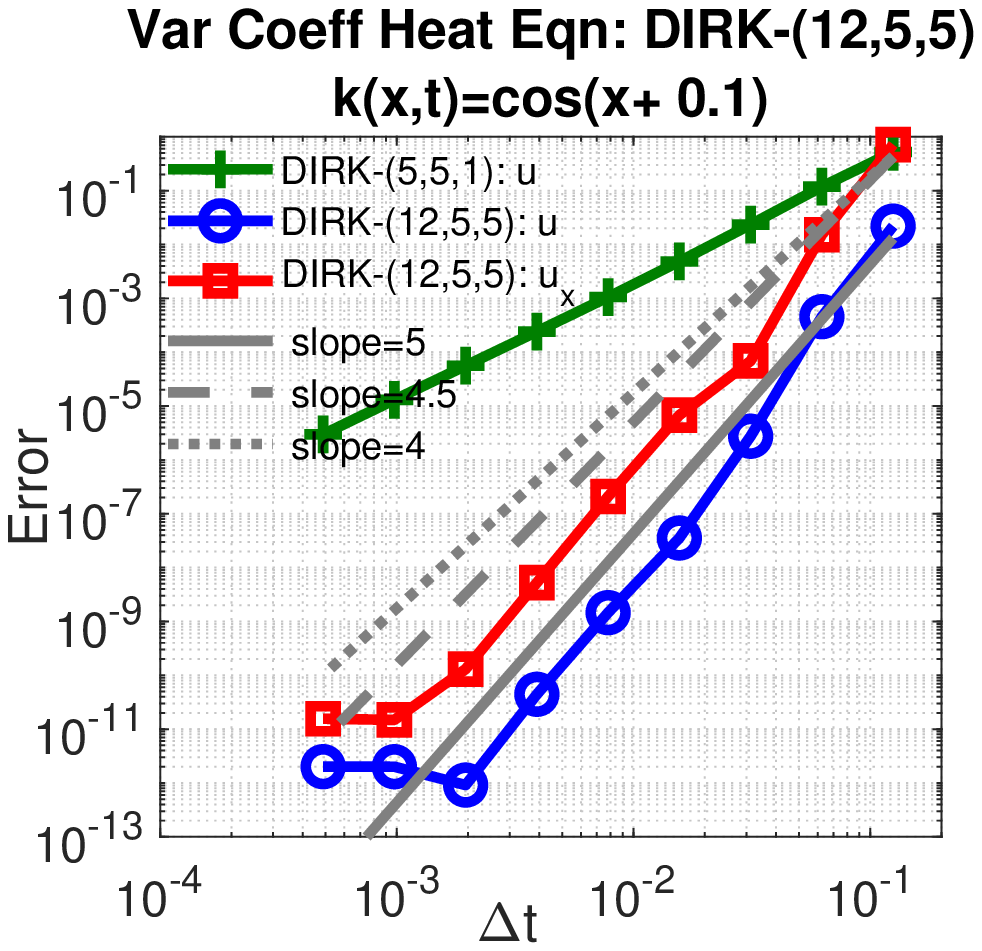}
	\end{minipage}
	\vspace{-.5em}
	\caption{Convergence ($u$ blue circles; $u_x$ red squares) for heat equation with spatially varying diffusion coefficient $\kappa(x) = \cos(x+0.1)$ using DIRK-$(7,4,4)$: $4$th order DIRK scheme with WSO $4$ (left), DIRK-$(12,5,4)$: $5$th order DIRK scheme with WSO $4$ (middle), and DIRK-$(12,5,5)$: $5$th order DIRK scheme with WSO $5$ (right).}
	\label{fig:SpaceDepnVarCoeffHeatEqn}
\end{figure}

\subsection{An equation with a fourth-order spatial derivative}
To demonstrate that our schemes remove order reduction for PDEs with more than one boundary condition, we consider 
\begin{equation}\label{eq:biharmonic}
u_t=-u_{xxxx} +f\ \text{for} \ (x,t)\in (0,1) \times (0,1] \;,
\end{equation}
with both ``Dirichlet'' and ``Neumann'' boundary conditions on each side, i.e., $u(0) = g_0(t)$, $u(1) = g_1(t)$, $u_x(0) = h_0(t)$, $u_x(1) = h_1(t)$, and the forcing $f$ such that the manufactured solution is $u(x,t) = \cos(15t)$. The final time is $T = 1$. In this equation, the $4$th order spatial derivative is approximated by a $2$nd-order centered finite difference on a fine grid of $10^4$ cells.
\begin{figure}[htb]
	\begin{minipage}[b]{.32\textwidth}
		\includegraphics[width=\textwidth]{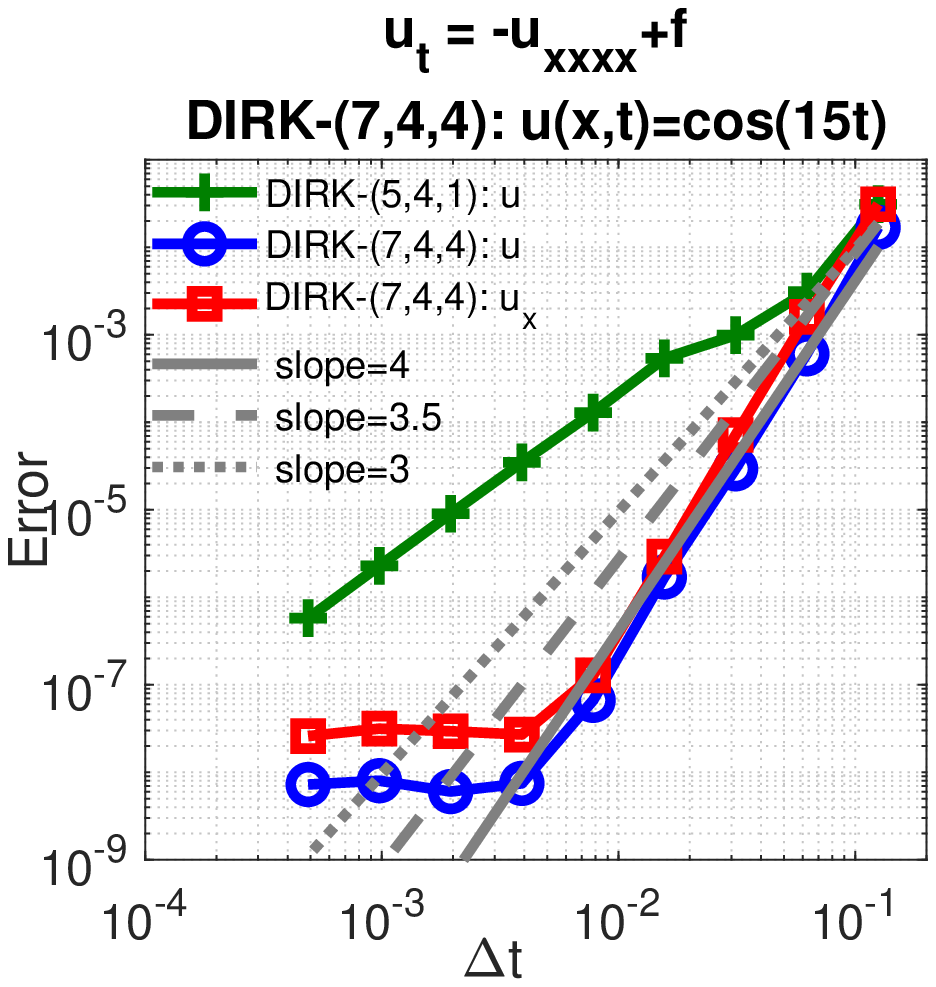}
	\end{minipage}
	\hfill
	\begin{minipage}[b]{.32\textwidth}
	\includegraphics[width=\textwidth]{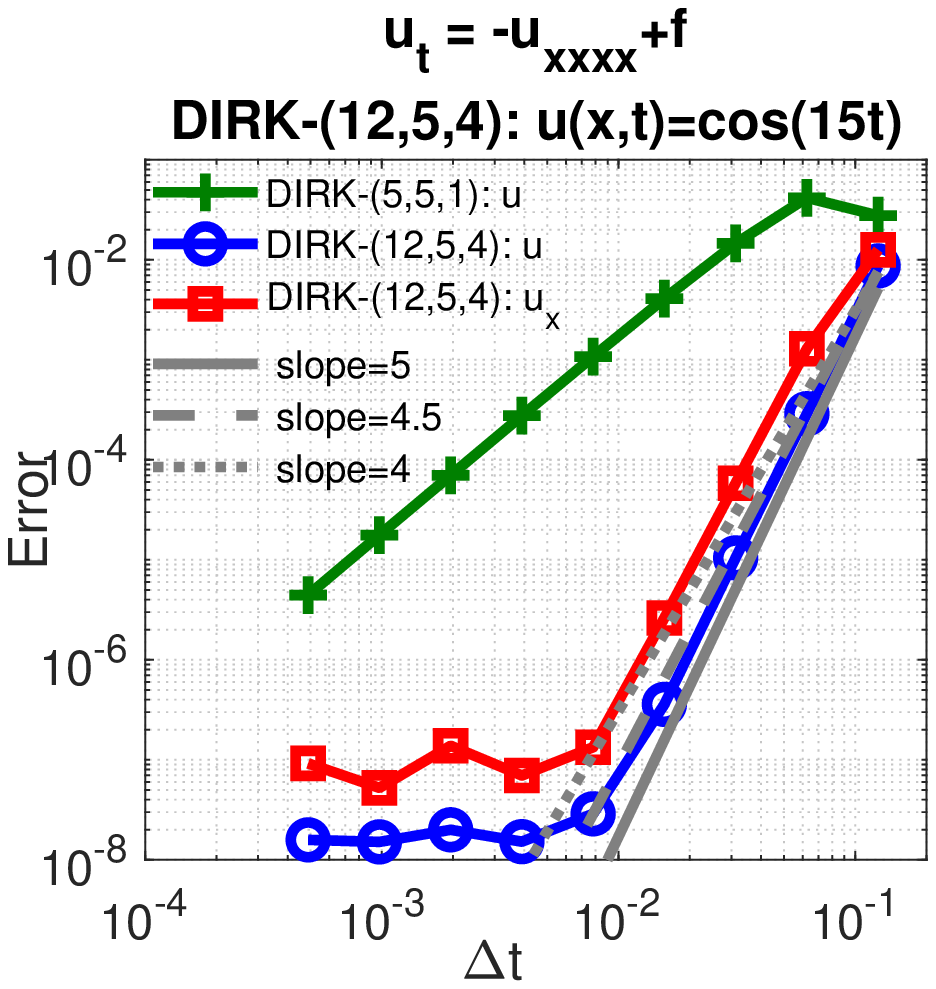}
	\end{minipage}
	\hfill
	\begin{minipage}[b]{.32\textwidth}
    \includegraphics[width=\textwidth]{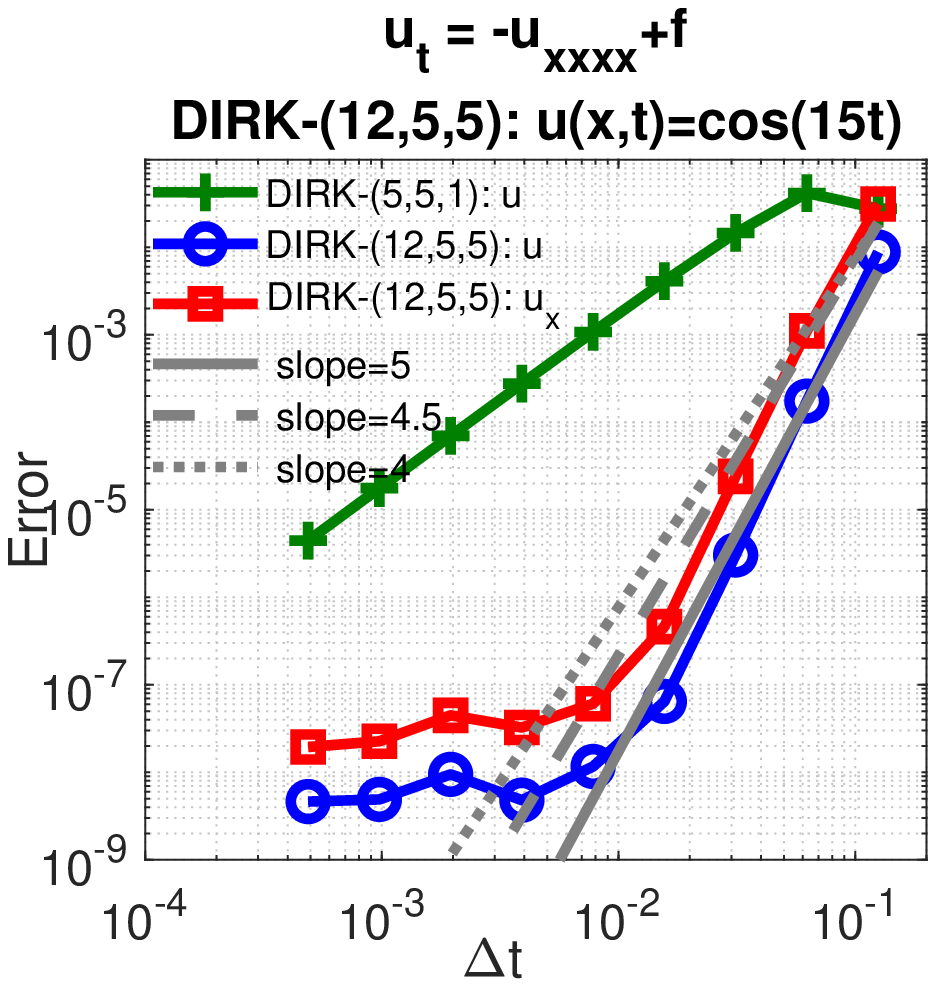}
	\end{minipage}
	\caption{Error convergence ($u$ blue; $u_x$ red) for equation \eqref{eq:biharmonic} using DIRK-$(7,4,4)$: $4$th order DIRK scheme with WSO $4$ (left), DIRK-$(12,5,4)$: $5$th order DIRK scheme with WSO $4$ (middle), and DIRK-$(12,5,5)$: $5$th order DIRK scheme with WSO $5$ (right).}
	\label{fig:BiharmonicEqn}
\end{figure}

The convergence results, obtained with the new high WSO DIRK schemes, as well as the reference WSO 1 schemes, are shown in Figure~\ref{fig:BiharmonicEqn}. The schemes with $q = p$ recover the full order of convergence for both $u$ and $u_x$. The time-stepping schemes produce numerical boundary layers whose width scales like $\mathcal{O}(\Delta t^{\frac{1}{4}})$, leading to $\frac{1}{4}$ order loss per derivative for the schemes that have $q < p$.

\subsection{Two-dimensional linear advection-diffusion equation}
In principle, the presence of corners (non-smooth domain boundaries) may be an additional source of error that could lead to order reduction. Here we examine a two-dimensional PDE problem in a square domain and demonstrate that our schemes remedy order reduction also in this setting. We consider the advection-diffusion equation
\begin{equation*}
u_t + u_x + u_y = \nu(u_{xx} + u_{yy}) + f(x,y), \ (x,y) \in [-1,1]^2 \;,
\end{equation*}
with $\nu = 0.1$, the forcing $f$, the boundary conditions and initial condition chosen such that the manufactured solution is $u(x,y,t) = \exp(-\frac{\pi^2}{8}t)\sin(\pi x+\frac{\pi}{4})\sin(\pi y+\frac{\pi}{4})$. 

\begin{figure}[htb]
	\begin{minipage}[b]{.32\textwidth}
		\includegraphics[width=\textwidth]{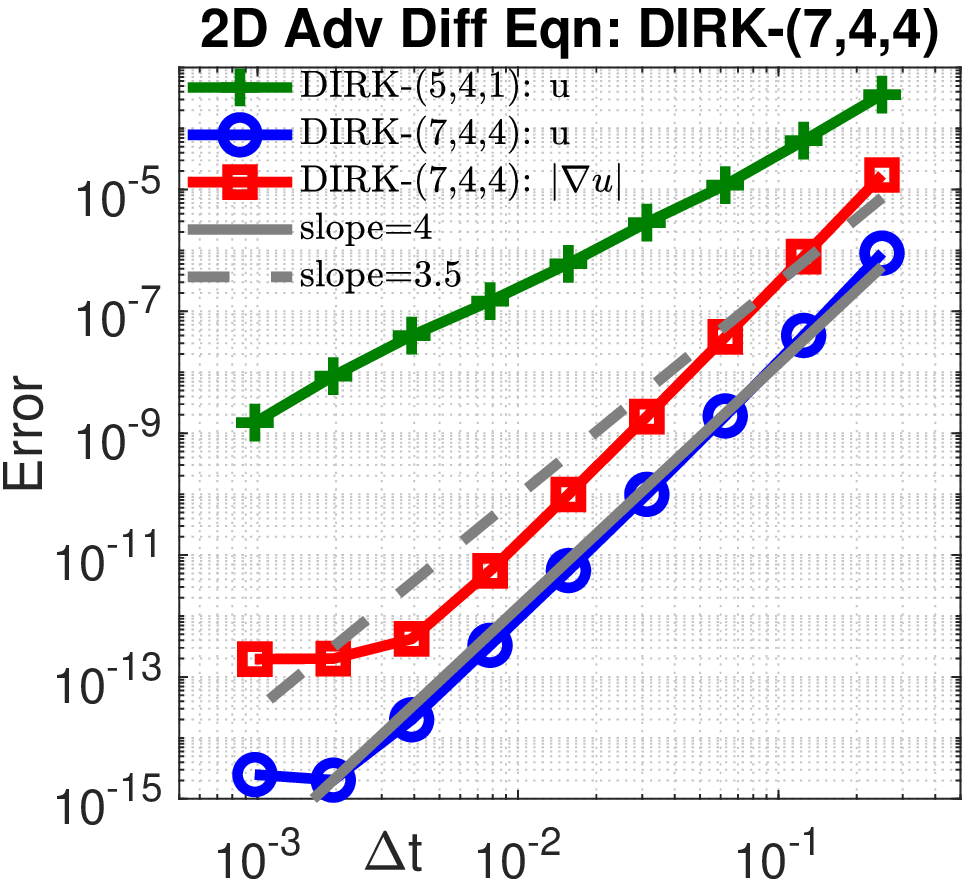}
	\end{minipage}
	\hfill
	\begin{minipage}[b]{.32\textwidth}
	\includegraphics[width=\textwidth]{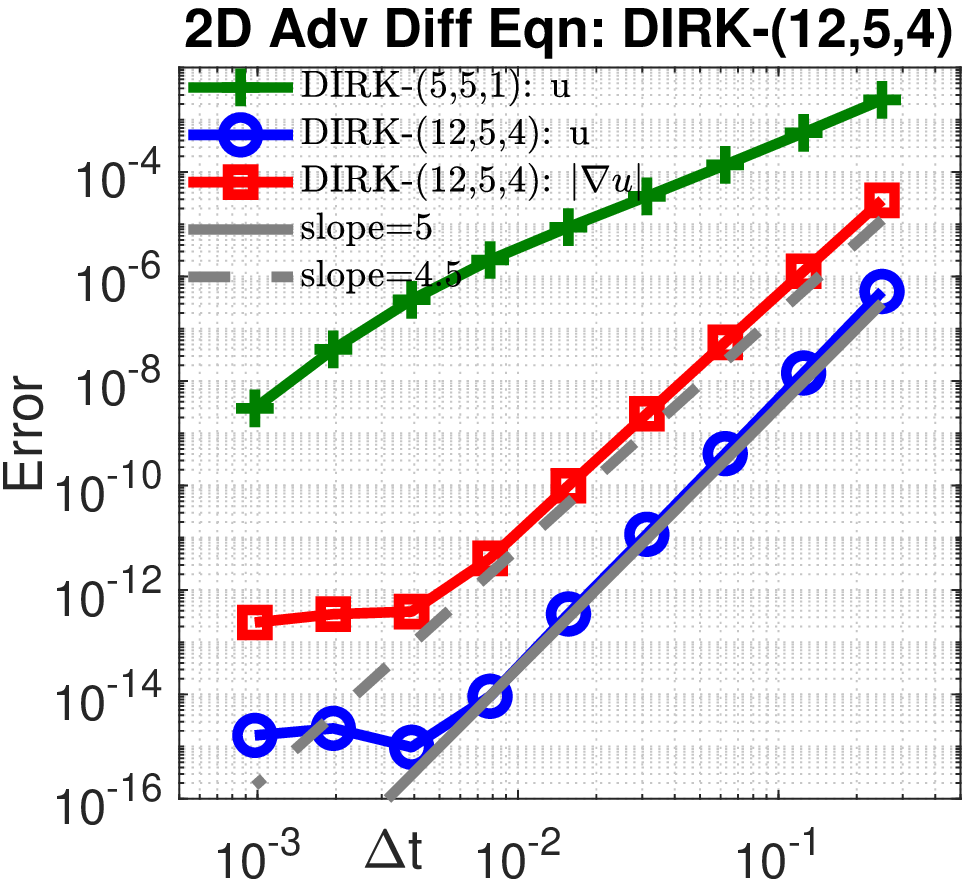}
	\end{minipage}
	\hfill
	\begin{minipage}[b]{.32\textwidth}
    \includegraphics[width=\textwidth]{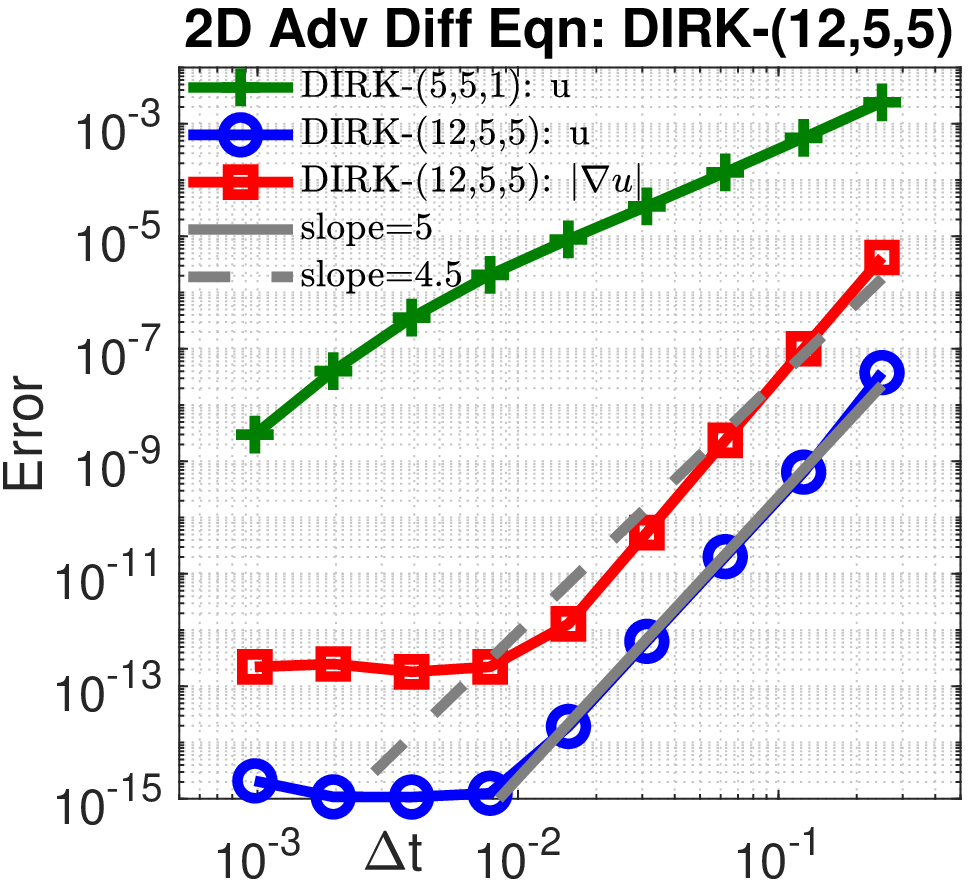}
	\end{minipage}
	\caption{Error convergence ($u$ blue; $|\nabla u|$ red) for a two-dimensional linear advection equation using DIRK-$(7,4,4)$: $4$th order DIRK scheme with WSO $4$ (left), DIRK-$(12,5,4)$: $5$th order DIRK scheme with WSO $4$ (middle), and DIRK-$(12,5,5)$: $5$th order DIRK scheme with WSO $5$ (right).}
	\label{fig:2dAdvDiffEqn}
\end{figure}

To ensure the spatial error is negligible, we use a spectral method on a $2$D tensor-product grid with $30$ Chebyshev points in each direction. We solve the problem up to $T=1$. Errors are plotted in Figure~\ref{fig:2dAdvDiffEqn}. The schemes with high WSO successfully avoid order reduction for this problem.

\section{Numerical Results: Time-Dependent Linear and Nonlinear Operators}
\label{sec:numerical_result_more_general_problems}
The weak stage order conditions (see \S\ref{sec:OR_phenomenon}) are derived based on a linear problem in which the coefficient of the linear term is time-independent. Here we explore the question whether these conditions are also sufficient to alleviate order reduction for more general problems.

\subsection{Heat equation with temporally varying coefficient}
\label{subsec:he_tdep_coeff}
We revisit the variable-coefficient heat equation of \S\ref{subsec:results_heateqn_varying_x}, but now allowing $\kappa$ to vary also in time:
\begin{equation*}
u_t = \left(\kappa(x,t)u_{x}\right)_{x}+f \ \ \text{for} \ (x,t)\in (0,1) \times (0,1], \quad u=g \ \ \text{on} \ \{0,1\} \times (0,1]\;.
\end{equation*}
We consider two different diffusion coefficient functions, one of which varies slowly in time, $\kappa(x,t) = \cos(0.1t+0.2)$, and another that oscillates rapidly in time, $\kappa(x,t) = 1+0.5\cos(30t+0.1)$. In both cases, the spatial derivatives are approximated using $6$th-order centered differences with $10^3$ cells, and the errors are evaluated at time $T=1$.
Figure~\ref{fig:SlowTimeDepnVarCoeffHeatEqn} shows that all high WSO schemes practically alleviate order reduction for the slowly-varying coefficient case. In contrast, for the rapidly-varying coefficient case, the schemes suffer from order reduction and do not produce clean high-order convergence results. That being said, the new high WSO schemes do turn out to yield smaller errors than the WSO-1 reference methods.

\begin{figure}[htb]
	\begin{minipage}[b]{.32\textwidth}
		\includegraphics[width=\textwidth]{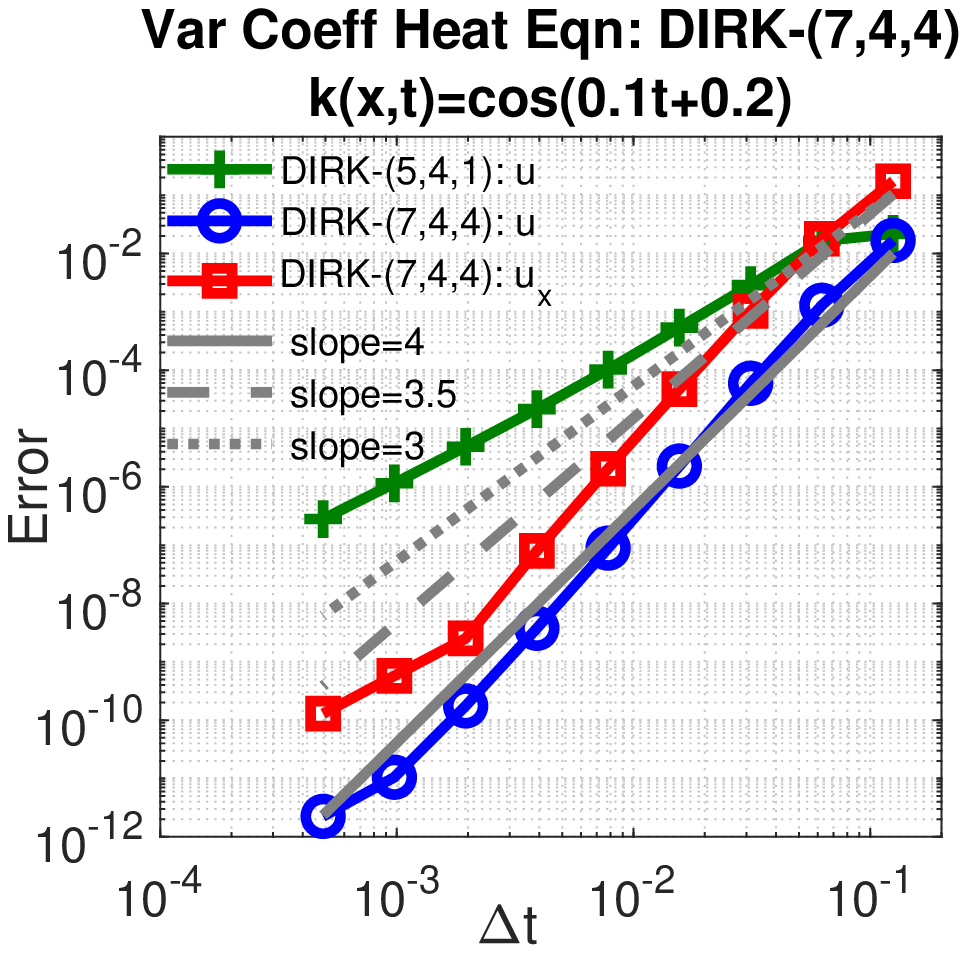}
	\end{minipage}
    \hfill
	\begin{minipage}[b]{.32\textwidth}
		\includegraphics[width=\textwidth]{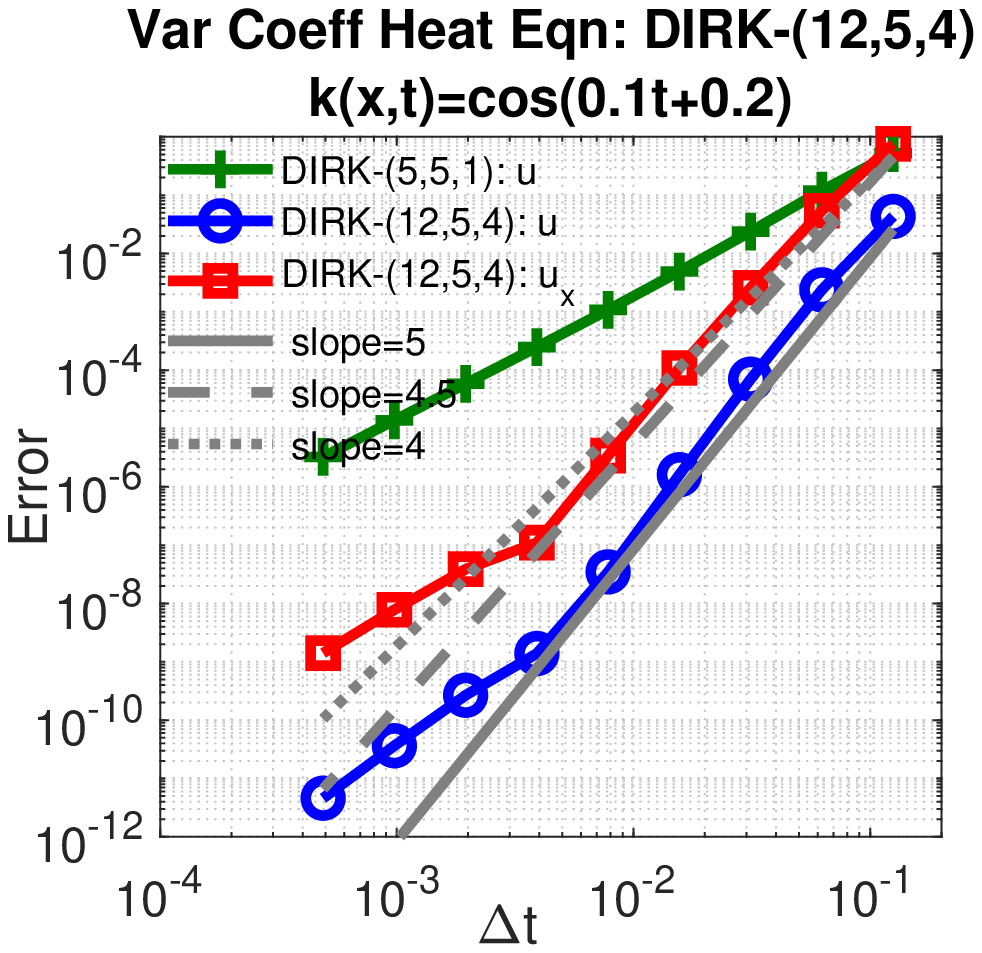}
	\end{minipage}
    \hfill
	\begin{minipage}[b]{.32\textwidth}
		\includegraphics[width=\textwidth]{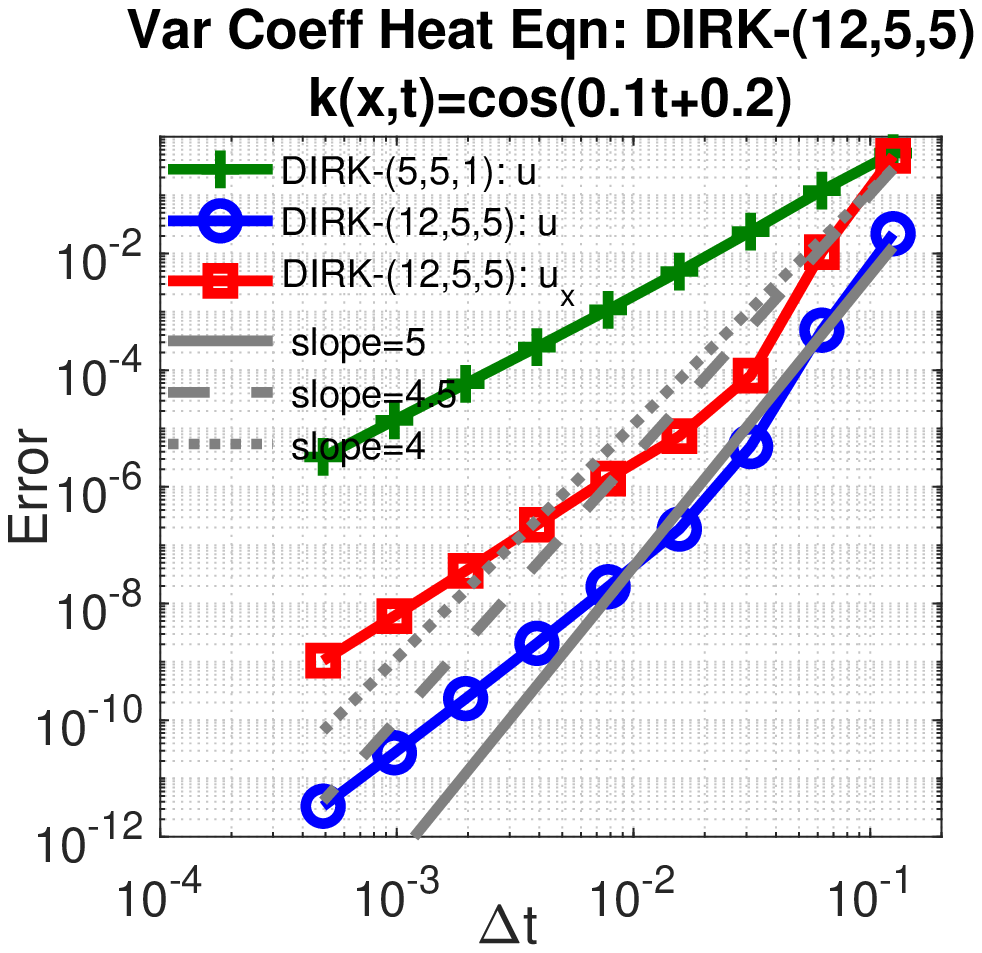}
	\end{minipage}
	
	\vspace{.5em}
	\begin{minipage}[h]{.32\textwidth}
	    \includegraphics[width=\textwidth]{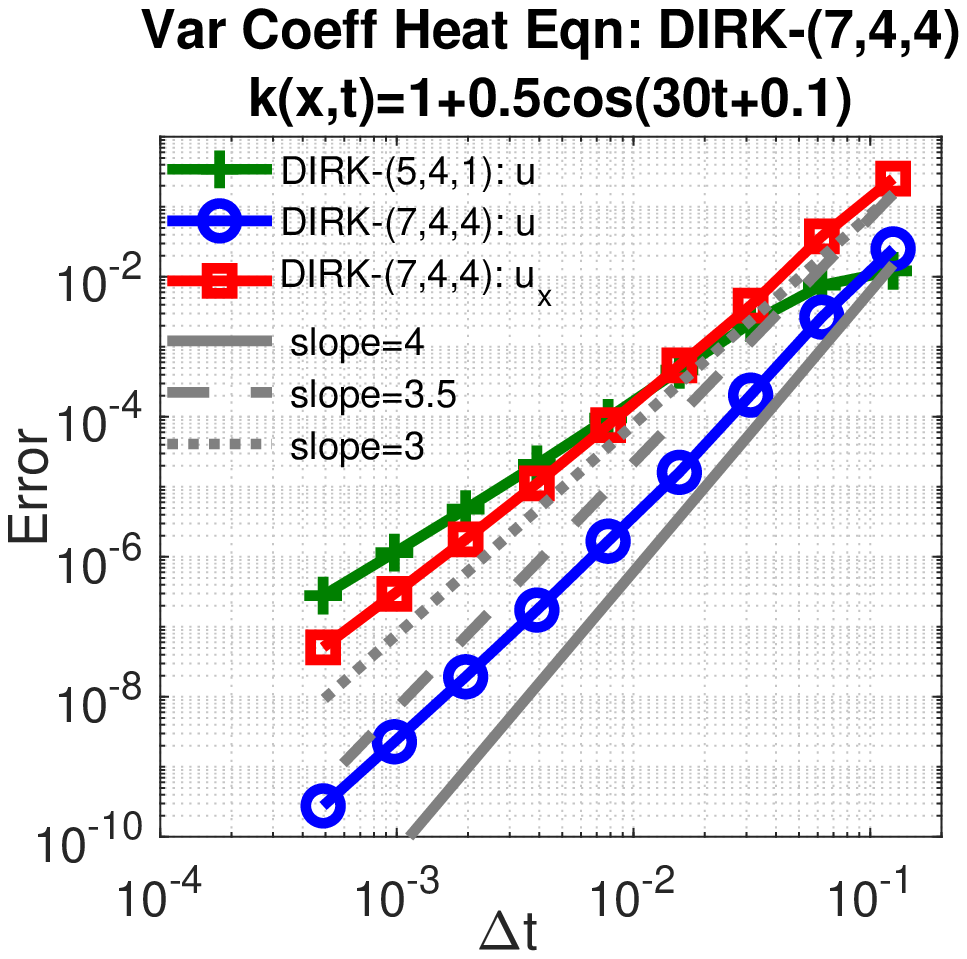}
    \end{minipage}
    \hfill
    \begin{minipage}[h]{.32\textwidth}
    	\includegraphics[width=\textwidth]{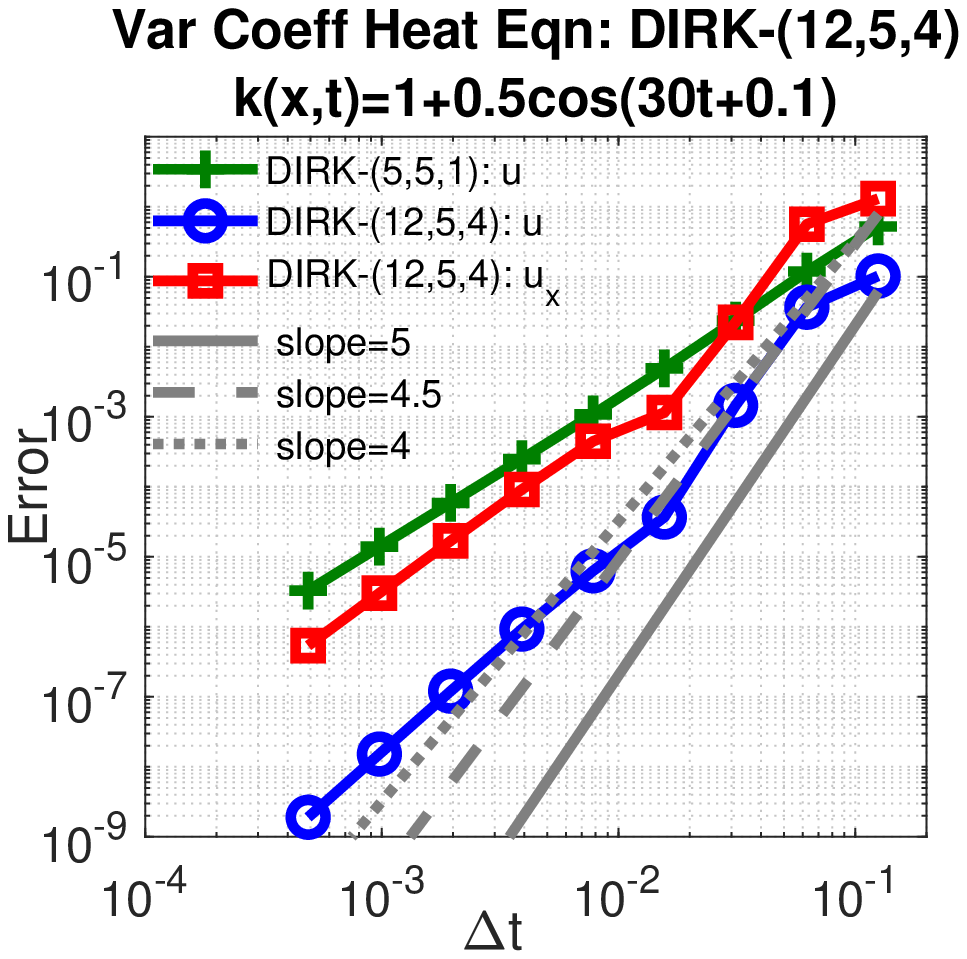}
    \end{minipage}
    \hfill
    \begin{minipage}[h]{.32\textwidth}
    	\includegraphics[width=\textwidth]{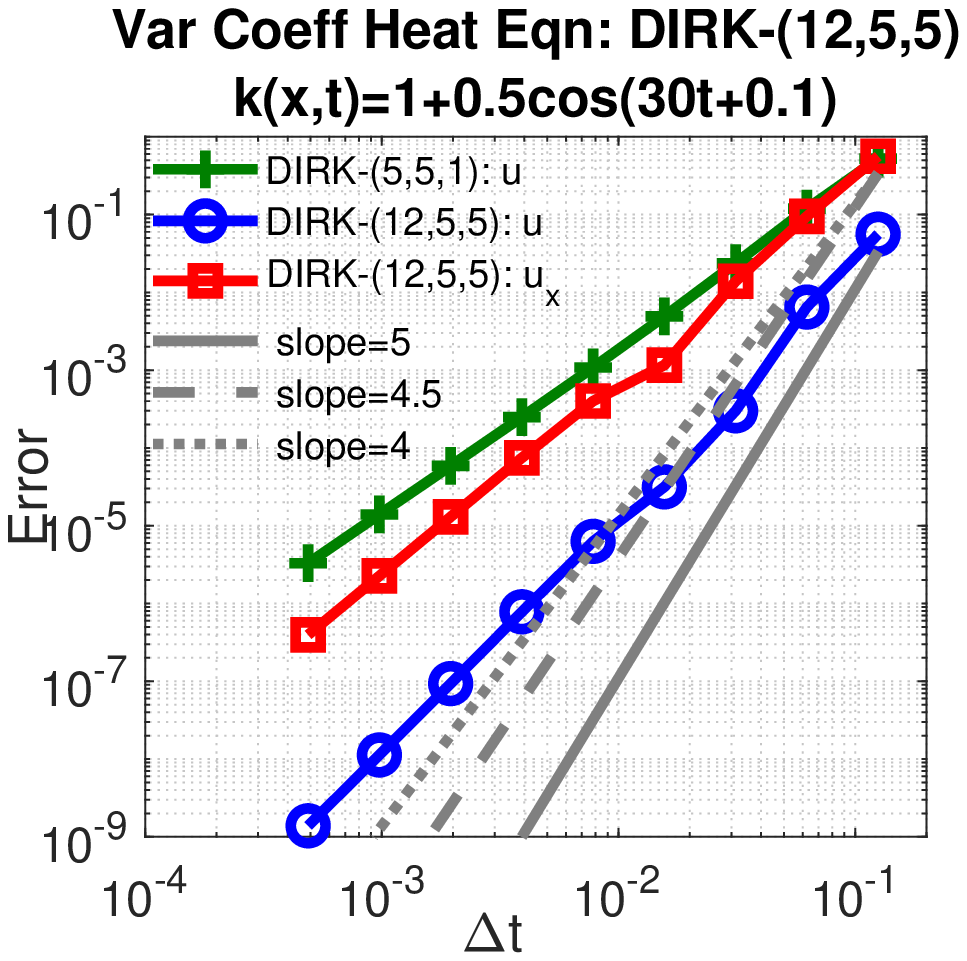}
    \end{minipage}
	\vspace{-.5em}
	\caption{Convergence ($u$ blue circles; $u_x$ red squares) for heat equation with diffusion coefficient $\kappa(x,t) = \cos(0.1t+0.2)$ (top) and $\kappa(x,t) = 1+0.5\cos(20t)$ (bottom) using DIRK-$(7,4,4)$: $4$th order DIRK scheme with WSO $4$ (left), DIRK-$(12,5,4)$: $5$th order DIRK scheme with WSO $4$ (middle), and DIRK-$(12,5,5)$: $5$th order DIRK scheme with WSO $5$ (right).}
	\label{fig:SlowTimeDepnVarCoeffHeatEqn}
\end{figure}

\subsection{Stiff nonlinear PDE: viscous Burgers' equation}
As a stiff nonlinear PDE problem, we study the viscous Burgers' equation,
\begin{equation}
\label{eq:Burgers_eqn}
u_t + u u_x=\nu u_{xx}+f \ \ \text{for} \ (x,t)\in (0,1) \times (0,1], \quad u=g \ \ \text{on} \ \{0,1\} \times (0,1]\;,
\end{equation}
with the true solution $u(x,t) = \cos(t)$, and the viscosity constant $\nu = 0.1$ (i.e., the main source of stiffness are the differential operators themselves). Here we choose a particularly simple manufactured solution to demonstrate that high weak stage order schemes do not fully remedy order reduction for nonlinear problems. However, we observe that these schemes still perform better than schemes with WSO $1$ in terms of accuracy and convergence order. This is important in the context of the demonstration in \cite{ketcheson2018dirk} that DIRK schemes with WSO up to 3 can exhibit clean and full order of convergence, even though problem \eqref{eq:Burgers_eqn} is nonlinear and thus outside the class of (linear) problems for which WSO is known to improve the accuracy of the LTE.

\begin{figure}[htb]
	\begin{minipage}[b]{.32\textwidth}
		\includegraphics[width=\textwidth]{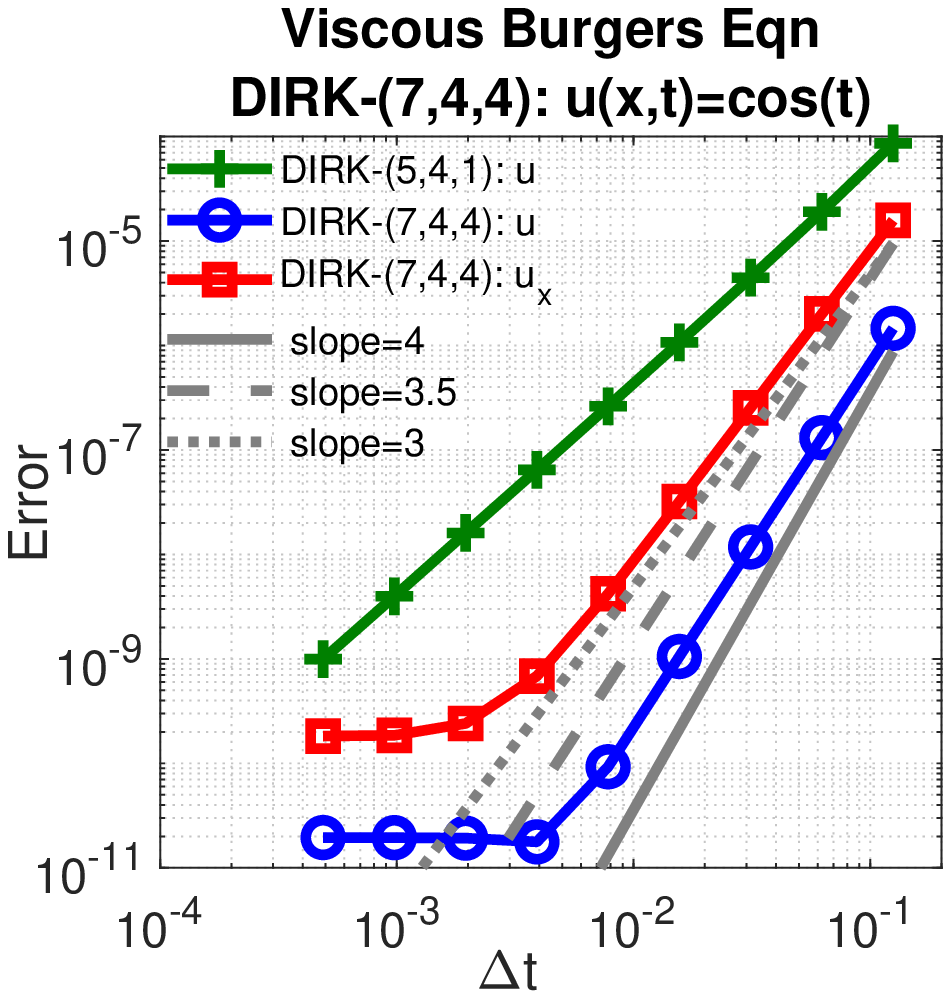}
	\end{minipage}
	\hfill
	\begin{minipage}[b]{.32\textwidth}
		\includegraphics[width=\textwidth]{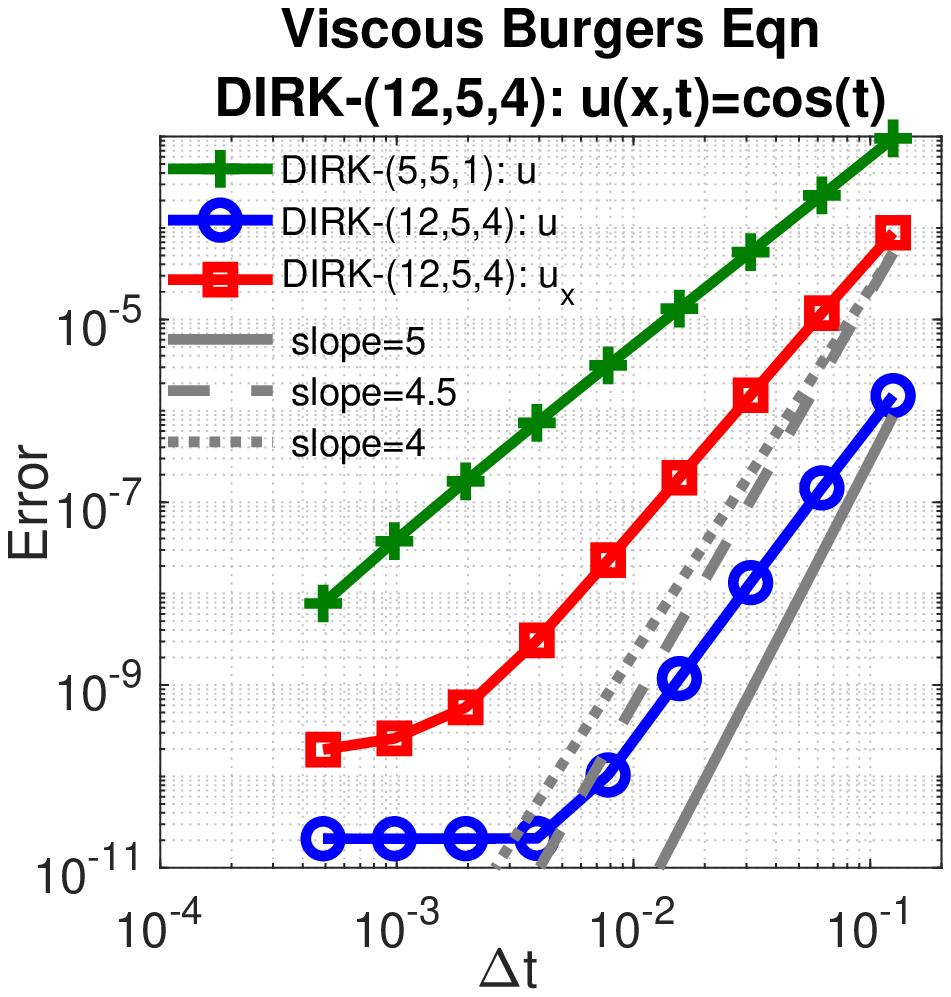}
	\end{minipage}
	\hfill
    \begin{minipage}[b]{.32\textwidth}
	    \includegraphics[width=\textwidth]{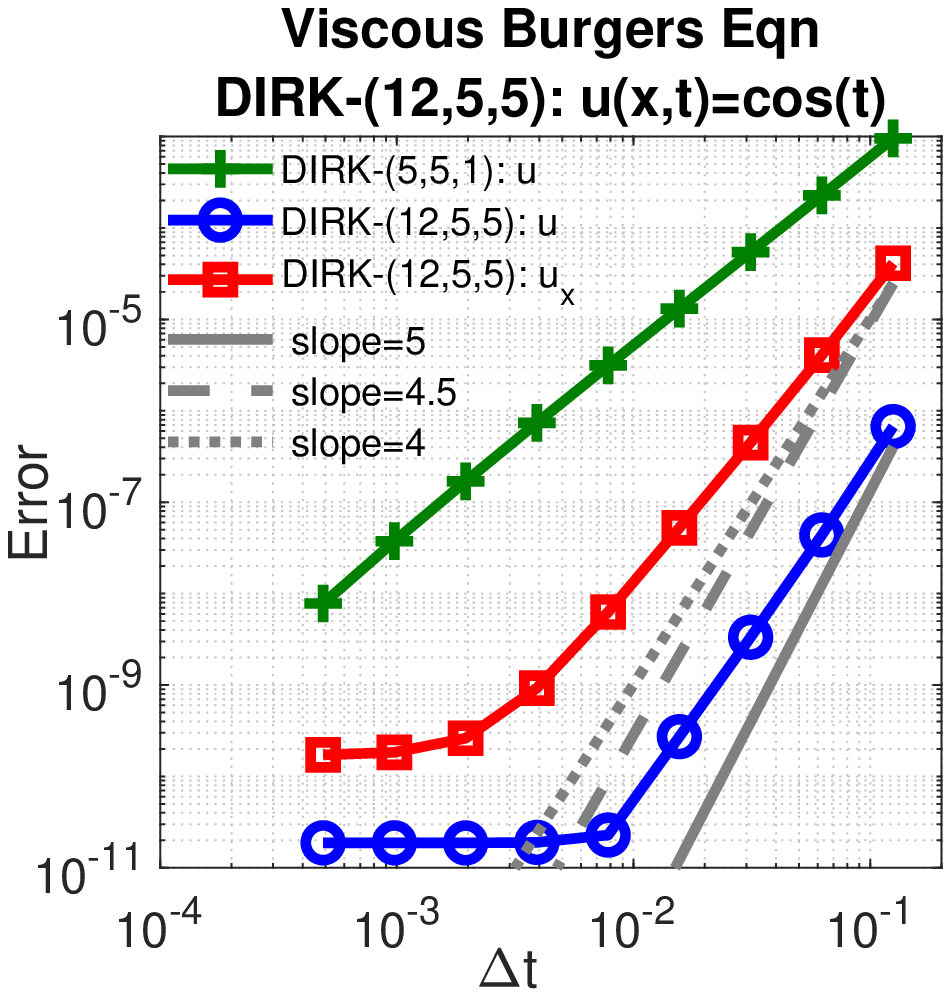}
    \end{minipage}
	\vspace{-.5em}
	\caption{Convergence for the viscous Burgers' equation using DIRK-$(7,4,4)$, DIRK-$(12,5,4)$, and DIRK-$(12,5,5)$.}
	\label{fig:VisBurgerEqn}
\end{figure}

Again, $6$th-order centered differences with $10^3$ cells are used to approximate the spatial derivatives, and the errors are evaluated at time $T=1$. Figure~\ref{fig:VisBurgerEqn} shows the convergence results for our new DIRK schemes with WSO $4$ and $5$ for the same test problem. We see that the high WSO schemes indeed turn out to generate a convergence order of $3$, which is better than what schems with WSO $1$ achieve, but there remains a reduction of order for the schemes of order above $3$ considered here.

\subsection{Stiff nonlinear ODE: Van der Pol oscillator}
To demonstrate that DIRK schemes with high weak stage order do not remedy order reduction for all types of problems, we consider, as a key benchmark example for stiff nonlinear ODE, the Van der Pol oscillator,
\begin{equation*}
    \frac{\mathrm{d}x}{\mathrm{d}t} = y \;, \quad 
    \frac{\mathrm{d}y}{\mathrm{d}t} = \mu \left(1-x^2\right)y-x\;,
\end{equation*}
with stiffness parameter $\mu = 500$, initial condition $(x(0),y(0))=(2,0)$, and final time $T=10$. For a range of time steps from $\Delta t= 0.5$ to $\Delta t \approx 2.44 \times 10^{-4}$, different DIRK schemes are applied, with Newton's method used to solve the nonlinear problems up to machine precision. The reference solution is calculated via the standard explicit RK$4$ method with time step $\Delta t = 10^{-6}$.
Figure~\ref{fig:VanDerPolEqn} shows the convergence for DIRK-$(7,4,4)$ and DIRK-$(12,5,5)$, clearly indicating that the high WSO in the DIRK schemes does not suffice to remove order reduction in the stiff regime ($10^{-2}<\Delta t<10^{-1}$).

\begin{figure}[htb]
	\begin{minipage}[b]{.47\textwidth}
	    \centering
		\includegraphics[width=.85\textwidth]{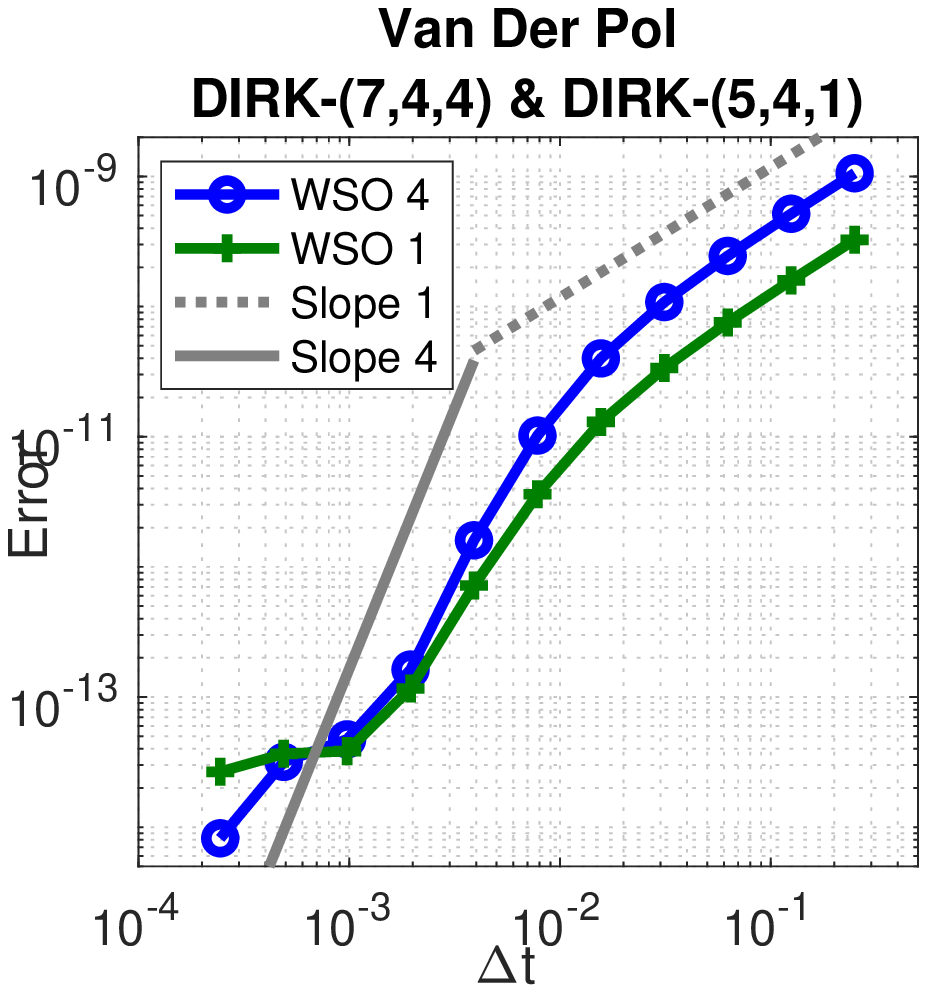}
	\end{minipage}
	\hfill
	\begin{minipage}[b]{.47\textwidth}
	    \centering
		\includegraphics[width=.85\textwidth]{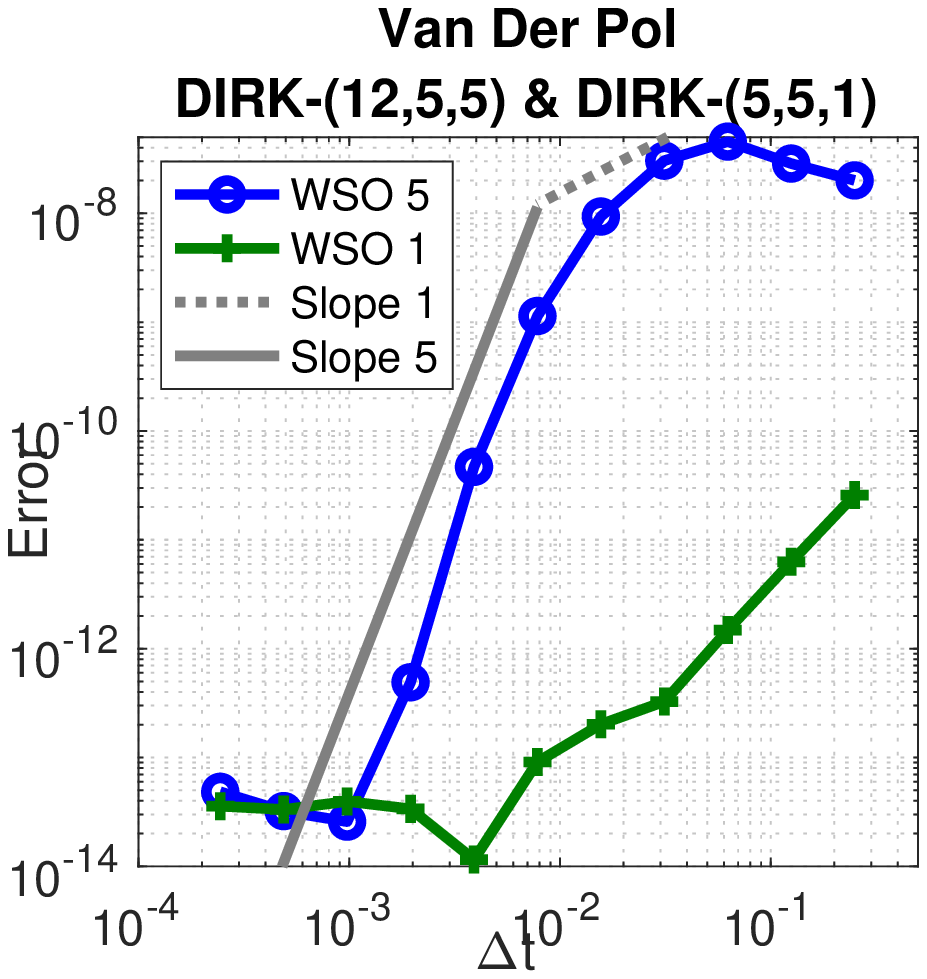}
	\end{minipage}
	\vspace{-.5em}
	\caption{Convergence for the Van der Pol oscillator using DIRK-$(7,4,4)$: $4$th order DIRK scheme with WSO $4$ (blue circles, left), and DIRK-$(12,5,5)$: $5$th order DIRK scheme with WSO $5$ (blue circles, right).}
	\label{fig:VanDerPolEqn}
\end{figure}

\section{Conclusions and Outlook}
\label{sec:DIRK_conclusions}
The results of this work can be seen as a reinforcement of the usefulness of weak stage order, which can remove order reduction in Runge-Kutta schemes applied to linear problems with time-independent operators. A key theoretical contribution of this paper is that it has been shown that WSO can indeed be extended beyond WSO 3 (which is important because a special case of WSO had previously been shown to be limited to WSO 3 \cite{ketcheson2018dirk}). Moreover, utilizing a general theory of WSO \cite{BiswasKetchesonSeiboldShirokoff2022}, three concrete new DIRK schemes, DIRK-$(7,4,4)$, DIRK-$(12,5,4)$, and DIRK-$(12,5,5)$, have been constructed with high WSO and other desirable properties: stiff accuracy, L-stability, and optimized error constants. These new schemes have been demonstrated to be practically useful, as they successfully address the order reduction problem in a variety of test problems, both those covered by the theory (linear problems with time-independent operators), as well as some (but not all) problems beyond the scope of the theory.

Because the new schemes have the standard form of RK methods, they can be easily incorporated into existing software and thus may be immediately useful for practitioners who seek to remedy order reduction while using DIRK time stepping.

The results presented here give rise to several further questions and research directions. First, problem $\textcolor{black}{(M)}$ in \S\ref{sec:optProblemDIRK} that characterizes optimal DIRK schemes is a polynomial optimization problem. While successfully solved via generic approaches herein, tailored modern optimization methods that yield provably globally optimal solutions represent a natural next step. Second, the number of stages used by the schemes provided here ($7$ and $12$ stages, respectively) is larger
than the theoretical minimum
number of stages implied by the theorems we provide. Both sharp bounds and concrete DIRK schemes that realize the minimum number of stages remain to be found. Third, specific explorations of WSO for EDIRKs ($a_{11} = 0$), SDIRKs (all $a_{ii}$ identical), and also explicit RK schemes, remain practically relevant open directions of research.

\appendix
\section{List of New DIRK Schemes} 
\label{app:butcherTableau}
See Table~\ref{table:DIRK_schemes} for the new DIRK schemes with high weak stage order.

\begin{table}
\centering
\caption{Butcher's tableau for DIRK-$(7,4,4)$ (left), DIRK-$(12,5,4)$ (middle), and DIRK-$(12,5,5)$ (right).}
\label{table:DIRK_schemes}
\begin{adjustbox}{angle=90}
\resizebox{20cm}{!}{%
	\begin{tabular}{l|l l l l l l l}
     1.290066345260422e-01  &    1.290066345260422e-01 \\                        
     4.492833135308985e-01  &    3.315354455306989e-01  &   1.177478680001996e-01  \\                      
     9.919659086525534e-03  &   -8.009819642882672e-02  &  -2.408450965101765e-03  &   9.242630648045402e-02  \\                       
     1.230475897454758e+00  &   -1.730636616639455e+00  &   1.513225984674677e+00  &   1.221258626309848e+00  &   2.266279031096887e-01  \\                        
     2.978701803613543e+00  &    1.475353790517696e-01  &   3.618481772236499e-01  &  -5.603544220240282e-01  &   2.455453653222619e+00  &   5.742190161395324e-01  \\                            
     1.247908693583052e+00  &    2.099717815888321e-01  &   7.120237463672882e-01  &  -2.012023940726332e-02  &  -1.913828539529156e-02  &  -5.556044541810300e-03  &   3.707277349712966e-01 \\
     1.000000000000000e+00   &   2.387938238483883e-01  &   4.762495400483653e-01  &   1.233935151213300e-02  &   6.011995982693821e-02  &   6.553618225489034e-05  & -1.270730910442124e-01            &  3.395048796261326e-01 \\ \hline
    &   2.387938238483883e-01  &   4.762495400483653e-01  &   1.233935151213300e-02  &   6.011995982693821e-02  &   6.553618225489034e-05  &  -1.270730910442124e-01            &  3.395048796261326e-01
			 				   			 
	\end{tabular}%
	}%
\end{adjustbox}
\hspace{2em}
\begin{adjustbox}{angle=90}
\resizebox{20cm}{!}{%
		\begin{tabular}{l|l l l l l l l l l l l l l}
         2.345371908646273e-01   &    2.345371908646273e-01  \\
         7.425871511958302e-01   &    6.874344413888787e-01  &   5.515270980695153e-02   \\
         3.296674204078279e-02   &   -1.183552669539587e-01  &   5.463563002913454e-03  &   1.458584459918280e-01 \\
         7.379564717201322e-01   &   -1.832235204042292e-01  &   5.269029412008775e-02  &   8.203685085133529e-01  &   4.812118949092085e-02 \\
         2.376643917109970e-01   &    9.941572060659400e-02  &   4.977904930055774e-03  &   5.414758174284321e-02  &  -1.666571741820749e-03  &     8.078975617332473e-02  \\
         1.750238160341377e+00   &   -9.896614721582678e-01  &   2.860682690577833e+00  & -1.236119341063179e+00   &   2.130219523351530e+00  &    -1.260655031676537e+00  &   2.457717913099987e-01  \\
         2.990308150015702e+00   &   -5.656238413439102e-02  &   1.661985685769353e-01  &   6.464600922362508e-01  &   6.608854962269927e-01  &   3.736054198873429e-01     & 6.294456964407685e-01   &     5.702752607818027e-01  \\ 
         2.882138003112822e+00   &   8.048962104724392e-01  &  -6.232034990249100e-02  &   5.737234603323347e-01   &  -9.613723511489970e-02  &    5.524106361737929e-01  &     5.961002486833255e-01  &   1.978411600659203e-01  &   3.156238724024008e-01 \\
         2.914399924907188e+00   &    -1.606381759216300e-01 &  6.833397073337708e-01  &   4.734578665308685e-01   &   8.037708984872738e-01  &   -1.094498069459834e-02  &     6.151263362711297e-01  &     3.908946848682723e-01 &  8.966103265353116e-02  &   2.973255537857041e-02 \\
         2.573507348677332e+00   &     7.074283235644631e-01 &  4.392037300952482e-01  &  -3.623592480237268e-02   &   7.189990308645932e-04  &   5.820968279166545e-01  &     3.302003177175218e-01   &    -2.394564021215881e-01 & -7.540283547997615e-03  &   1.702137469523672e-01   &   6.268780138721711e-01   \\
         3.567266961364713e+00   &     1.361197981133694e-01 & -7.486549901902831e-01  &   1.893908350024949e+00   &   3.940485196730028e-01  &   6.240233526545023e-02  &    7.511983862200027e-01   &    -5.283465265730526e-01 & -1.661625677872943e+00  &   9.998723833190827e-01   &   1.377776742457387e+00  &   8.905676409277480e-01 \\
         1.000000000000000e+00	 &   -7.433675378768276e-01 &  1.490594423766965e-01  &  -2.042884056742363e-02   &   8.565329438087443e-04  &   1.357261590983184e+00  &     2.067512027776675e-03   &     9.836884265759428e-02 & -1.357936974507222e-02  &  -5.428992174996300e-02   &  -3.803299038293005e-02  &  -9.150525836295019e-03  & 2.712352651694511e-01 \\ \hline
         &   -7.433675378768276e-01 &  1.490594423766965e-01  &  -2.042884056742363e-02   &   8.565329438087443e-04  &   1.357261590983184e+00  &     2.067512027776675e-03   &     9.836884265759428e-02 & -1.357936974507222e-02  &  -5.428992174996300e-02   &  -3.803299038293005e-02  &  -9.150525836295019e-03  & 2.712352651694511e-01
		\end{tabular}%
	}
\end{adjustbox}
\hspace{2em}
\begin{adjustbox}{angle=90}
\resizebox{20cm}{!}{%
		\begin{tabular}{l|l l l l l l l l l l l l l}
             4.113473525867655e-02  &     4.113473525867655e-02  \\
             2.269850660400232e-01  &     1.603459327727949e-01  &     6.663913326722831e-02 \\
             6.222969192243949e-01  &    -3.424389044264752e-01  &     8.658006324816373e-01  &     9.893519116923277e-02 \\
             1.377989449231234e+00  &     9.437182028870806e+00  &    -1.088783359642350e+01  &     2.644025436733866e+00  &     1.846155800500574e-01 \\
             1.259841986970257e+00  &    -3.425409029430815e-01  &     5.172239272544332e-01  &     9.163589909678043e-01  &     5.225142808845742e-02  &     1.165485436026433e-01  \\
             1.228350442796143e+00  &    -2.094441177460360e+00  &     2.577655753533404e+00  &     5.704405293326313e-01  &     1.213637180023516e-01  &    -4.752289775376601e-01  &    5.285605969257756e-01 \\
             1.269855051265635e+00  &    3.391631788320480e-01  &   -2.797427027028997e-01   &    1.039483063369094e+00 &  5.978770926212172e-02   &   -2.132900327070380e-01   & 8.344318363436753e-02  &     2.410106515779412e-01 \\
             2.496200652601413e+00  &     5.904282488642163e+00  &    3.171195765985073e+00  &    -1.236822836316587e+01  &    -4.989519066913001e-01  &     2.160529620826442e+00    &   1.916104322021480e+00  &     1.988059486291180e+00  &    2.232092386922440e-01 \\
             2.783820705331141e+00  &     4.616443509508975e-01  &   -1.933433560549238e-01   &   -1.212541486279519e-01   &    6.662362039716674e-02  &  4.254912950625259e-01  &  7.856131647013712e-01  &   8.369551389357689e-01  &  1.604780447895926e-01  &    3.616125951766939e-01  \\
             3.337101417632813e+00  &    -7.087669749878204e-01   &   6.466527094491541e-01  &   4.758821526542215e-01   &  -2.570518451375722e-01   &  1.123185062554392e+00  &  5.546921612875290e-01   &  3.192424333237050e-01  &  3.612077612576969e-01  &  5.866779836068974e-01  &  2.353799736246102e-01  \\
             4.173423133876636e+00  &     4.264162484855930e-01   &  1.322816663477840e+00  &  4.245673729758231e-01  &  -2.530402764527700e+00  &  -7.822016897497742e-02  &     1.054463080605071e+00 &  4.645590541391895e-01  &    1.145097379521439e+00  &  4.301337846893282e-01  &   1.499513057076809e+00  &  1.447942640822165e-02   \\
             1.000000000000000e+00  &   1.207394392845339e-02  &  5.187080074649261e-01  &  1.121304244847239e-01 &  -4.959806334780896e-03  &  -1.345031364651444e+00  &     3.398828703760807e-01  &  8.159251531671077e-01  &  -2.640104266439604e-03  &  1.439060901763520e-02  &  -6.556567796749947e-03  &  6.548135446843367e-04  &     5.454220210658036e-01  \\ \hline
             &   1.207394392845339e-02  &  5.187080074649261e-01  &  1.121304244847239e-01 &  -4.959806334780896e-03  &  -1.345031364651444e+00  &     3.398828703760807e-01  &  8.159251531671077e-01  &  -2.640104266439604e-03  &  1.439060901763520e-02  &  -6.556567796749947e-03  &  6.548135446843367e-04  &     5.454220210658036e-01
		\end{tabular}%
	}
\end{adjustbox}
\end{table}

\section*{Acknowledgments}
This material is based upon work supported by the National Science Foundation under Grants No.\ DMS--2012271 (Biswas, Seibold), No.\ DMS--1952878 (Seibold), and No.\ DMS--2012268 (Shirokoff).


\vspace{1.5em}
\bibliographystyle{plain}
\bibliography{references}

\vspace{1.5em}
\end{document}